\numberwithin{equation}{section}
\newtheorem{proposition}{Proposition}[section]
\newtheorem{lemma}[proposition]{Lemma}
\newtheorem{theorem}[proposition]{Theorem}
\newtheorem{corollary}[proposition]{Corollary}
\newtheorem{conjecture}{Conjecture}[section]
\theoremstyle{definition}
\newtheorem{remark}[proposition]{Remark}
\newtheorem{definition}[proposition]{Definition}
\newtheorem{example}[proposition]{Example}
\DeclareMathOperator{\GL}{GL}
\DeclareMathOperator{\Aut}{Aut}
\DeclareMathOperator{\Ric}{Ric}
\DeclareMathOperator{\Grass}{Grass}
\DeclareMathOperator{\SL}{SL}
\DeclareMathOperator{\Spec}{Spec}
\DeclareMathOperator{\des}{des}
\DeclareMathOperator{\DF}{DF}
\DeclareMathOperator{\Lie}{Lie}
\DeclareMathOperator{\ord}{ord}
\DeclareMathOperator{\mE}{E}
\DeclareMathOperator{\mM}{M}
\DeclareMathOperator{\mR}{R}
\DeclareMathOperator{\mH}{H}
\DeclareMathOperator{\Hilb}{Hilb}
\DeclareMathOperator{\dist}{dist}
\DeclareMathOperator{\moU}{U}
\newcommand{\R}{\mathbb{R}}
\newcommand{\C}{\mathbb{C}}
\newcommand{\Z}{\mathbb{Z}}
\newcommand{\Q}{\mathbb{Q}}
\newcommand{\pr}{\mathbb{P}}
\renewcommand{\epsilon}{\varepsilon}
\newcommand{\scO}{\mathcal{O}}
\newcommand{\scV}{\mathcal{V}}
\newcommand{\ddb}{i\partial \bar\partial}
\renewcommand{\L}{\mathcal{L}}
\newcommand{\X}{\mathcal{X}}
\renewcommand{\H}{\mathcal{H}}
\renewcommand{\phi}{\varphi}
\newcommand\FS{\mathrm{FS}}
\newcommand\mi{^{-1}}
\newcommand\na{{\mathrm{NA}}}
\newcommand\bbc{\mathbb{C}}
\newcommand\bbp{\mathbb{P}}
\newcommand\bbr{\mathbb{R}}
\newcommand\cC{\mathcal{C}}
\newcommand\cF{\mathcal{F}}
\newcommand\cH{\mathcal{H}}
\newcommand\cL{\mathcal{L}}
\newcommand\cO{\mathcal{O}}
\newcommand\cX{\mathcal{X}}
\newcommand{\norm}[1][\cdot]{\left\|#1\right\|}
\newcommand\triv{{\mathrm{triv}}}
\newcommand\NA{^{\mathrm{NA}}}
\newcommand\red{{\mathrm{red}}}
\newcommand\field{{\mathrm{K}}}
\newcommand\ring{{\mathrm{R}}}
\newcommand\Sym{\mathrm{Sym}}
\newcommand\cV{\mathcal{V}}
\title[Arcs, stability of pairs and the Mabuchi functional]{Arcs, stability of pairs and the Mabuchi functional}
\author[Ruadha\'i Dervan and R\'emi Reboulet]{Ruadha\'i Dervan and R\'emi Reboulet}
\address{Ruadha\'i Dervan, School of Mathematics and Statistics, University of Glasgow, University Place, Glasgow G12 8QQ, United Kingdom}\email{ruadhai.dervan@glasgow.ac.uk}
\address{R\'emi Reboulet, Department of Mathematical Sciences, Chalmers University of Technology, Chalmers tvärgata 3, Göteborg 412 96, Sweden}\email{rebouletremimath@gmail.com}
\begin{document}

\begin{abstract} We prove various results involving arcs---which generalise test configurations---within the theory of K-stability.

Our main result characterises coercivity of the Mabuchi functional on spaces of Fubini--Study metrics in terms of uniform K-polystability with respect to arcs, thereby proving a version of a conjecture of Tian. The main new tool is an arc version of a numerical criterion for Paul's theory of stability of pairs, for which we also provide a suitable generalisation applicable to pairs with nontrivial stabiliser.

We give two applications. Firstly, we give a new proof of a version of the Yau--Tian--Donaldson conjecture for Fano manifolds, along the lines originally envisaged by Tian---allowing us to reduce the general Yau--Tian--Donaldson conjecture to an analogue of the partial $C^0$-estimate. Secondly, for a (possibly singular) polarised variety which is uniformly K-polystable with respect to arcs, we show that the associated Cartan subgroup of its automorphism group is reductive. In particular, uniform K-stability with respect to arcs implies finiteness of the automorphism group. This generalises work of Blum--Xu for Fano varieties.\end{abstract}

\maketitle

\tableofcontents

\section{Introduction}

Let $(X,L)$ be a smooth polarised variety. The question of relating the existence of constant scalar curvature K\"ahler (cscK) metrics in $c_1(L)$ and algebro-geometric stability of $(X,L)$ has been a central goal of K\"ahler geometry for over thirty years. The guiding conjecture, due to Yau--Tian--Donaldson, is usually stated as follows:

\begin{conjecture}[Yau--Tian--Donaldson] The  class $c_1(L)$ admits a cscK metric if and only if $(X,L)$ is K-polystable.

\end{conjecture}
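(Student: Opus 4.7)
The plan is to follow Tian's original strategy, using the arc-theoretic machinery announced in the abstract to bridge the analytic and algebro-geometric sides. On the analytic side, existence of a cscK metric in $c_1(L)$ is, by now-classical results, equivalent to coercivity of the Mabuchi functional $\cM$ on an appropriate space of K\"ahler potentials, modulo the action of the reduced connected automorphism group $\Aut_0(X,L)/\bbc^\times$. On the algebro-geometric side, K-polystability is a non-Archimedean slope condition on $\cM$ along test configurations. The bridge will be the paper's main theorem: coercivity of $\cM$ on Fubini--Study spaces is equivalent to uniform K-polystability with respect to arcs.

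First, I would establish an arc-version of Paul's numerical criterion for stability of pairs, handling nontrivial stabilisers by working modulo the orbit closure of the relevant point in projective space. Applied to the Chow form and hyperdiscriminant of $(X,L)$ embedded by $|kL|$, whose log-norm quotient computes the Mabuchi functional along Bergman metrics up to a term controlled by the $J$-functional, this translates coercivity of $\cM$ on Fubini--Study metrics into an arc-wise numerical inequality. This is the content of the paper's main result.

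Next, I would upgrade coercivity from Fubini--Study spaces to the full space $\cE^1$ of finite-energy potentials using a partial $C^0$-estimate: such an estimate guarantees that general K\"ahler potentials can be approximated by Bergman-type potentials in a way that preserves the growth of $\cM$ and $J$. In the Fano case, the partial $C^0$-estimate is available by work of Donaldson--Sun, Sz\'ekelyhidi and others, so one obtains an unconditional proof of the Fano K\"ahler--Einstein YTD correspondence. In the general polarised case, the same argument reduces the conjecture to an analogue of the partial $C^0$-estimate, as claimed in the introduction. Finally, the reductivity result announced in the abstract, applied at the polystable level, allows one to quotient by $\Aut_0$ coherently on both sides.

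The main obstacle is the arc-version of Paul's numerical criterion in the presence of automorphisms. Paul's original proof uses GIT machinery for points in projective space and the Hilbert--Mumford criterion for one-parameter subgroups, and extending both to arcs (rather than test configurations) requires a non-Archimedean reformulation where arcs play the role of valuative morphisms $\Spec R \to X$; additionally, accommodating nontrivial stabilisers forces every stability notion, every norm, and every numerical invariant to be taken relative to an orbit closure, with care to ensure the modification is compatible with the Mabuchi slope formula. A secondary technical point is identifying the resulting arc numerical invariant with the non-Archimedean Mabuchi functional up to the $J$-error, which requires carefully matching asymptotic expansions of equivariant Riemann--Roch data along arcs.
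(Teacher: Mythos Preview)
The statement you are attempting to prove is the Yau--Tian--Donaldson conjecture itself, which the paper does \emph{not} prove. It is stated as Conjecture~1.1, and immediately afterwards the paper remarks that the original statement ``is expected to be false in general, as evidence suggests a condition stronger than K-polystability is needed for it to hold.'' There is therefore no proof in the paper to compare your proposal against.

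What you have outlined is essentially the paper's programme for its actual main results: Theorem~\ref{intromainthm} (coercivity on Fubini--Study spaces is equivalent to uniform K-polystability with respect to \emph{arcs}), Corollary~\ref{coro:intro2} (the Fano case, unconditional via Sz\'ekelyhidi's partial $C^0$-estimate), and the reduction of the general case to the partial $C^0$-estimate. Your sketch of the arc numerical criterion, the identification of the arc weight with the non-Archimedean Mabuchi functional, and the passage through the partial $C^0$-estimate all match the paper's approach to \emph{those} results.

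The genuine gap in your proposal as a proof of the conjecture as stated is that you never close the distance between K-polystability (defined via test configurations and the sign of the Donaldson--Futaki invariant) and uniform K-polystability with respect to arcs (a strictly stronger condition involving a uniform lower bound and a larger class of degenerations). Your argument, if it goes through, yields the latter notion on the algebro-geometric side, not the former; and there is no step in your outline that would collapse the two. The paper is explicit that this collapse is not expected in general, which is precisely why it states the result in terms of arcs.
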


The notion of K-polystability involves test configurations, which are degenerations of $(X,L)$, embedded into a projective space, through $\C^*$-actions. To a test configuration $(\X,\L)$, one associates a numerical invariant called the Donaldson--Futaki invariant $\DF(\X,\L)$, and K-polystability asks for this invariant to be strictly positive unless the degeneration (or central fibre) $(\X_0,\L_0)$ is isomorphic to $(X,L)$.

The original statement of the Yau--Tian--Donaldson conjecture is expected to be false in general, as evidence suggests a condition stronger than K-polystability is needed for it to hold \cite{hattori-jstab, ACGTF}. One common strengthening is to instead consider uniform K-polystability \cite{uniform-twisted, bhj:duistermaat, hisamoto:toric}, where one asks for a uniform lower bound on the Donaldson--Futaki invariant with respect to a norm, and it is known \cite{bhj:asymptotics,hisamoto:toric} that the existence of a cscK metric implies uniform K-polystability. 

Another strengthening of K-polystability, due to Donaldson \cite{donaldson:bstab}, is to enlarge the class of test objects: he generalises test configurations to what are called \emph{arcs} or equivalently \emph{models} (see Proposition \ref{prop:arcsvmodels} for the precise relationship), which are degenerations over the formal disc $\Spec\C\llbracket z\rrbracket$, not required be induced by $\C^*$-actions, but where one fixes a trivialisation of the family away from the central fibre. The most common way of working with the Donaldson--Futaki invariant of a test configuration is through intersection theory, and the same definition can be made sense of for models, as we review in Section \ref{sec2}. We mention that the idea of associating numerical invariants to general classes of degenerations was also introduced by Wang \cite{xiaowei-height}, at around the same time.

We make progress towards the Yau--Tian--Donaldson conjecture by following this direction, combining models and uniform K-polystability. To state our main results, recall the Mabuchi functional on the space of K\"ahler potentials $\H$ relative to a fixed reference K\"ahler metric $\omega \in c_1(L)$, whose critical points are cscK metrics; we denote this functional $M: \H\to \R$. As we will be interested in the case where $(X,L)$ admits automorphisms, we fix further notation: we let $H\subset \Aut(X,L)$ be a maximal connected compact subgroup, and $T\subset H$ a maximal torus, and denote $\H^H$ the space of $H$-invariant potentials (where we assume $\omega$ is $H$-invariant). We similarly assume all arcs are $H^{\C}$-invariant, where $H^{\C}$ denotes the complexification. There is further a natural functional $J_T$ on $\H^H$, called the reduced $J$-functional, which plays the role of a norm, and for a subset $A\subset \H^H$ we will say that the Mabuchi functional is coercive on $A$ if there exists $\epsilon, \delta>0$ such that for all $\phi \in A$ we have $$M(\phi)\geq \epsilon J_T(\phi) - \delta.$$ We will be interested in two cases: when $A$ is the entirety of $\H^H$, and when $A$ consists of ($H$-invariant) Fubini--Study metrics under a fixed embedding of $X$ into projective space via the linear system $|rL|$; we denote the latter space by $\H^H_r$.

\begin{theorem}\label{intromainthm} The following are equivalent:
\begin{enumerate}[(i)]
\item $(X,L)$ is uniformly K-polystable with respect to arcs.
\item the Mabuchi functional is coercive on each space  $\H^H_r$ of Fubini--Study metrics.
\end{enumerate}
\end{theorem}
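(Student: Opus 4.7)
The proof is an equivalence, so I plan to handle the two directions separately. The main technical input is an arc-version of Paul's numerical criterion for stability of pairs, extended to pairs with nontrivial stabiliser so as to accommodate the $H$-invariance.

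For (ii) $\Rightarrow$ (i), fix an $H^{\C}$-invariant arc $(\X,\L)$ and $r$ large enough that $rL$ is very ample. The trivialisation of $\X$ away from the central fibre, combined with a fixed auxiliary smooth metric on $\L$, produces a one-parameter family of Hermitian inner products on $H^0(X,rL)$, and hence a ray $(\phi_t)_{t\geq 0}$ in $\H^H_r$ parameterised by $t=-\log|z|$. A slope computation, adapted to the arc setting from the test-configuration arguments of Phong--Ross--Sturm and Boucksom--Hisamoto--Jonsson, identifies the asymptotic slopes of $M$ and $J_T$ along $(\phi_t)$ with the non-Archimedean Mabuchi functional (i.e.\ the Donaldson--Futaki invariant of the arc, up to lower-order terms) and its reduced $J$-norm respectively. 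Applying the assumed coercivity $M \geq \epsilon J_T - \delta$ along this ray and letting $t\to\infty$ yields the uniform lower bound on $\DF(\X,\L)$ required for uniform K-polystability with respect to arcs.

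For (i) $\Rightarrow$ (ii), my plan is to lift Paul's picture to arcs. For fixed $r$, there is a classical identification of $\H_r$ modulo the unitary action with a $\GL(N+1,\C)$-orbit of a pair $(\Chow(X),\opF(X))$ in two projective representations (Chow form and hyperdiscriminant), under which the Mabuchi functional equals, up to bounded terms, the difference of the logarithms of the norms of these two points. I would then prove an arc-version of Paul's numerical criterion: coercivity of this difference of log-norms on the $\GL$-orbit is equivalent to a uniform lower bound, over all arcs in $\GL$, of the difference of the associated non-Archimedean weights. Combined with the intersection-theoretic description of arc-invariants on $(X,L)$ from Section \ref{sec2}, the weight-difference equals $\DF$ minus a multiple of the reduced norm, so that uniform K-polystability with respect to arcs translates directly into coercivity on $\H^H_r$. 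The $H$-invariance is incorporated by extending the pair-stability criterion to pairs with stabiliser $H^{\C}$, so that $J_T$ rather than the full $J$-functional appears on the analytic side.

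The main obstacle is the arc-version of Paul's numerical criterion, in particular its extension to pairs with nontrivial stabiliser. In Paul's original framework, the criterion is proved via convexity of log-norms along $\R$-actions coming from one-parameter subgroups of $\GL$, to which standard convex-analytic tools apply. Arcs are germs over $\Spec \C\llbracket z \rrbracket$ with no globally defined $\C^*$-action, so one must replace convexity with a slope analysis over the formal disc and match the resulting non-Archimedean slopes with the algebro-geometric arc-invariants. The refinement to pairs with nontrivial stabiliser further demands a $T$-equivariant decomposition of the Chow/hyperdiscriminant representations, essentially replacing $\GL$-coercivity by coercivity modulo the stabiliser throughout.
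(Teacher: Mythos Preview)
Your proposal is correct and follows the same route as the paper: Paul's log-norm formula for the Mabuchi and $J$-functionals, slope formulae along arcs identifying the asymptotic slopes with $\mM^{\na}$ and $\|\cdot\|_{T^{\C}}$, and---as the central new ingredient---an arc numerical criterion for polystability of pairs with nontrivial stabiliser. One small correction regarding the obstacle you name: Paul's criterion is not proved via convexity along one-parameter subgroups (indeed, by Paul--Sun--Zhang one-parameter subgroups cannot characterise semistability of pairs), and the paper's key device for the polystable case is instead to define polystability through the Hilbert scheme of closures $\overline{T^{\C}.u}$ in the ambient projective space and then produce a destabilising arc at the level of this Hilbert scheme via the valuative-criterion argument already underlying Mumford's original proof of the Hilbert--Mumford criterion.
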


To be more precise, what we prove is an equivalence between the condition that there is an $\epsilon>0$ such that for all arcs $(\X,\L)$ we have $$\DF(\X,\L)\geq \epsilon \|(\X,\L)\|_{T^{\C}},$$ and the condition that there are $\epsilon, \delta_r$ strictly positive---where $\delta_r$ may depend on $r$---such that on $\H^H_r$ we have the coercivity condition $$M(\phi)\geq \epsilon J_T(\phi) - \delta_r.$$ This latter analytic condition is often called CM polystability, so what we prove is that CM polystability is equivalent to uniform K-polystability with respect to arcs.

We thus obtain a complete analytic characterisation of uniform K-polystability with respect to arcs, in terms of the Mabuchi functional. We also obtain more precise results, which hold for a fixed value of $k$ and consider only arcs induced by the embedding of $X$ into projective space via $|rL|$, provided we replace the Donaldson--Futaki invariant with the non-Archimedean Mabuchi functional (which for similar reasons as above makes sense not only for test configurations but also for arcs). We note that by Berman--Darvas--Lu \cite{BDL:aens}, the existence of a cscK metric implies coercivity of the Mabuchi functional on $\H^H$---hence on each $\H^H_r$---producing the following corollary.
 
 \begin{corollary}\label{coro:intro1} If the class $c_1(L)$ admits a cscK metric, then $(X,L)$ is uniformly K-polystable with respect to arcs.
 \end{corollary}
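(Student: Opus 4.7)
The proof is essentially a two-step combination of the main theorem with an existing analytic input, so the plan is compact. First, I would invoke the Berman--Darvas--Lu theorem, which says that if $c_1(L)$ admits a cscK metric, then the Mabuchi functional $M$ is coercive on the full space $\cH^H$ of $H$-invariant K\"ahler potentials: there exist $\epsilon,\delta>0$ with $M(\phi)\geq\epsilon J_T(\phi)-\delta$ for all $\phi\in\cH^H$. I would note here that the compatibility of $H$-invariance with the setup of the paper means this version of the Berman--Darvas--Lu result, incorporating the reduced $J$-functional $J_T$ in the presence of automorphisms, is precisely what is needed.

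Second, since the space $\cH^H_r$ of $H$-invariant Fubini--Study metrics under the embedding by $|rL|$ is a subset of $\cH^H$, the global coercivity trivially restricts to coercivity on each $\cH^H_r$, with the same constants $\epsilon,\delta$ (so in particular $\delta_r=\delta$ may be chosen independent of $r$). This establishes condition (ii) of Theorem \ref{intromainthm}.

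Finally, I would apply the equivalence (i)$\Leftrightarrow$(ii) of Theorem \ref{intromainthm} in the direction (ii)$\Rightarrow$(i), to conclude that $(X,L)$ is uniformly K-polystable with respect to arcs, as desired.

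There is no genuine obstacle in this argument beyond invoking the two inputs; the real content sits in Theorem \ref{intromainthm} itself. The only subtlety worth flagging is the matching of conventions, namely that the coercivity constants provided by Berman--Darvas--Lu are stated with the reduced $J$-functional $J_T$ that appears in the statement of Theorem \ref{intromainthm}, and that the Fubini--Study spaces $\cH^H_r$ genuinely sit inside $\cH^H$ as $H$-invariant subspaces — both of which follow from the standing assumption that $\omega$ and the embeddings are $H$-equivariant.
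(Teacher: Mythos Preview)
Your proposal is correct and follows essentially the same approach as the paper's proof (Corollary~\ref{body:cc-cor}): invoke Berman--Darvas--Lu to obtain coercivity on $\cH^H$, restrict to each $\cH^H_r$, and then apply Theorem~\ref{intromainthm} (equivalently Theorem~\ref{thm:polyeq}) to conclude uniform K-polystability with respect to arcs. The paper is slightly more explicit about the passage from models of a fixed exponent $r$ to all models (via Proposition~\ref{prop:arcsvmodels} and Remark~\ref{DFMNA}), but this is already packaged into the statement of Theorem~\ref{intromainthm} that you cite.
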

 
 In different directions, this strengthens work of Donaldson \cite{donaldson:algebraic} and Sz\'ekelyhidi \cite{sze:filtr} (who proved existence implies K-stability with respect to arcs under a discrete automorphism group assumption) and Boucksom--Hisamoto--Jonsson \cite{bhj:asymptotics} and Hisamoto \cite{hisamoto:toric} (who proved that existence implies uniform K-polystability with respect to test configurations, see also Li \cite{chili:ytd}). We are primarily interested in the other implication of Theorem \ref{intromainthm}, which is completely new. 
 
 We mention that a version of Theorem \ref{intromainthm} has been sought in the field for a long time using test configurations, first conjectured by Tian \cite{tian:kems}, and although the numerous works aiming to prove it all contain (unfixable) errors\footnote{See Paul--Sun--Zhang for a discussion of this prior work \cite[Remark 1.4]{paulsunzhang}.}, they have been influential to our work.

\subsection*{Tian's programme} To put Theorem \ref{intromainthm} into  context, we recall Tian's three-step programme to prove the Yau-Tian-Donaldson conjecture for Fano manifolds \cite{tian:kems}, which is expected to also apply in general to the cscK problem. 

The first step is completely analytic, and requires showing that the existence of a cscK metric is equivalent to coercivity of the Mabuchi functional, namely the existence of $\epsilon, \delta>0$ such that on $\H^H$ $$M(\phi)\geq \epsilon J_T(\phi) - \delta.$$ This first step is now a deep theorem of Chen--Cheng \cite{chencheng:ii,chencheng:iii}, with a version of this having been originally proven by Tian \cite{tian:kems} in the setting of K\"ahler--Einstein metrics on Fano manifolds  and with Darvas--Rubinstein having proven the version of the result stated here \cite{dar:rubinstein}. 

The second step is to show that coercivity of the Mabuchi functional on the full space $\H^H$ is equivalent to coercivity on each space  $\H^H_r$ of Fubini--Study metrics (where again, the $\delta_r$ involved may depend on $k$). This step is known as Tian's ``partial $C^0$-estimate'' and has been established by Sz\'ekelyhidi \cite{sze:partial} for Fano manifolds (this step is often stated as a condition on Bergman kernels, but arguments of Tian explain the relationship with this consequence for the Mabuchi functional, as we recall in the last  section of our work). 

The third step is where the algebro-geometric stability condition enters, and requires showing that K-polystability is equivalent to coercivity of the  Mabuchi functional on each space  $\H^H_r$ of Fubini--Study metrics. This step is open even in the Fano case (that is, without appealing to the resolution of the Yau--Tian--Donaldson conjecture for Fano manifolds), and in general it is expected that---for this step to hold---one needs to strengthen the notion of K-polystability. This third step is the one that Theorem \ref{intromainthm} gives a solution to, through arcs.
 
As all steps of Tian's programme are now known in the Fano case, we obtain the following consequence of Theorem \ref{intromainthm}, proving the arc version of the Yau--Tian--Donaldson conjecture along the lines originally envisaged by Tian \cite{tian:kems}:

\begin{corollary}\label{coro:intro2} A Fano manifold admits a K\"ahler--Einstein metric if and only if it is uniformly K-polystable with respect to arcs.

\end{corollary}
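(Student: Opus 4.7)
The plan is to derive this corollary by assembling Theorem \ref{intromainthm} with the three steps of Tian's programme that are already in place in the Fano setting, exactly as outlined in the introduction. For the ``only if'' direction, I would simply invoke Corollary \ref{coro:intro1} applied to the polarisation $L = -K_X$: if a K\"ahler--Einstein metric exists, it is in particular a cscK metric in $c_1(L)$, hence $(X,L)$ is uniformly K-polystable with respect to arcs.

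For the ``if'' direction, the argument runs in three stages. Starting from uniform K-polystability with respect to arcs, I would first apply Theorem \ref{intromainthm} to conclude that the Mabuchi functional is coercive on each finite-dimensional space $\H^H_r$ of Fubini--Study metrics, with coercivity constants $\epsilon$ uniform in $r$ (and error $\delta_r$ possibly depending on $r$). Next, I would upgrade coercivity from each $\H^H_r$ to coercivity on the full space $\H^H$. In the Fano case this is the content of Tian's partial $C^0$-estimate, which as the introduction recalls was established by Sz\'ekelyhidi \cite{sze:partial}; the link between the usual Bergman kernel formulation of the estimate and the coercivity statement I need is the one reviewed in the final section of the paper and already going back to Tian. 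Finally, coercivity of the Mabuchi functional on all of $\H^H$ yields the existence of a cscK metric in $c_1(-K_X)$, which is a K\"ahler--Einstein metric, by Chen--Cheng \cite{chencheng:ii,chencheng:iii}; in this particular Fano setting one can alternatively use Tian's original work \cite{tian:kems} together with the $H$-equivariant refinement of Darvas--Rubinstein \cite{dar:rubinstein}.

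Each individual stage is essentially an invocation of a known theorem, so there is no genuine analytic or algebraic obstacle to overcome once Theorem \ref{intromainthm} is in hand. The only point that requires a little care is the precise form of the coercivity inequalities being passed between the stages: Theorem \ref{intromainthm} delivers coercivity with a uniform $\epsilon$ but an $r$-dependent constant $\delta_r$, whereas Chen--Cheng and Darvas--Rubinstein use coercivity on $\H^H$ with a single $\delta$. The partial $C^0$-estimate of Sz\'ekelyhidi is precisely the device that bridges these two formulations, and ensuring that the constants match up is the one place where I would need to be attentive rather than purely citational. In summary, the corollary is a formal consequence of Theorem \ref{intromainthm} once one threads Tian's programme through it in the Fano case.
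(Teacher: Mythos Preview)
Your proposal follows the paper's strategy, and the ``only if'' direction is exactly as in the paper (Corollary \ref{body:cc-cor}). For the ``if'' direction, however, the paper does not literally upgrade coercivity from each $\H^H_r$ to full coercivity on $\H^H$: Sz\'ekelyhidi's partial $C^0$-estimate is a bound $|\FS_r(\phi_s)-\phi_s|\leq C$ valid only along the twisted continuity method $\phi_s$, not for arbitrary $\phi\in\H^H$. The paper instead combines Theorem \ref{thm:polyeq} with this estimate to obtain coercivity $M(\phi_s)\geq\epsilon J_T(\phi_s)-\delta$ along the continuity path (Theorem \ref{thm:contmethod}), observes that $M$ is decreasing along that path so $J_T(\phi_s)$ remains bounded, and then invokes Chen--Cheng to extract the cscK limit from this bounded sequence. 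The three-step picture you sketch matches the introduction's summary, but the implemented argument routes through the continuity method rather than establishing coercivity on all of $\H^H$ directly; this is precisely the ``attentive rather than purely citational'' point you flagged, and it is the one place your outline would need to be rewritten to match the paper.
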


The stronger statement that K-polystability is equivalent to the existence of a K\"ahler--Einstein metric on Fano manifolds is due to Chen--Donaldson--Sun \cite{ytd:cds}, Berman \cite{ber:kpoly} and Tian \cite{tian:kems}, and there have since been several other proofs, often involving stronger variants of K-polystability \cite{bbj, KZ-quantization,DK-twisted,datszek,csw:kahlerricci}. In the general polarised case, Theorem \ref{intromainthm} along with the aforementioned work of Chen--Cheng  \cite{chencheng:iii,chencheng:ii} means that the first and third steps of Tian's programme are completed, leaving the second step as the outstanding one. 

Separately, Li and Berman--Boucksom--Jonsson \cite{chili:ytd, bbj} have reduced the uniform Yau--Tian--Donaldson conjecture (involving uniform K-polystability with respect to test configurations) to the non-Archimedean entropy approximation conjecture. It follows from their work and Theorem \ref{intromainthm} that the non-Archimedean entropy approximation conjecture would imply the version of the partial $C^0$-estimate explained as the second step of Tian's programme here, since uniform K-polystability with respect to arcs implies uniform K-polystability with respect to test configurations.

\subsection*{Stability of pairs} The main tool we use in our work is Paul's stability of pairs. For a fixed embedding of $X$ into projective space via $|rL|$, classically we obtain a Chow point $R_X$ lying inside the Chow variety. We furthermore obtain a second point $\Delta_X$ constructed in a related manner, and by choosing a lift we may consider these as lying in two separate vector spaces $V$ and $W$, which by construction admit actions of $G=\GL(H^0(X,rL))$. The group $G$ can be viewed as moving $X$ inside projective space, or dually as moving the Fubini--Study metric on $X$, and so we may consider the Mabuchi functional as inducing a functional $M: G \to \R$. A remarkable result of Paul \cite{paul:hyperdiscriminants}, which in essence states that the Mabuchi functional is an algebraic object, shows that there exist Hermitian inner products on $V$ and $W$ such that $$M(g) =\log|R_{g(X)}| - \log|\Delta_{g(X)}|.$$ A similar result holds for $J_T$. 

Motivated by these results, Paul introduces the following general setup. Consider a pair of vectors $(v,w) \in V\oplus W$ where $V$ and $W$ are vector spaces with actions of a (complex) linear algebraic group $G$. Paul then defines the pair $(v,w)$ to be \emph{semistable} if $$\overline{G.[v,w]} \cap \pr(0\oplus W) = \varnothing.$$ Endowing $V$ and $W$ with Hermitian inner products, Paul then proves that the pair $(v,w)$ is semistable if and only if there exists a $\delta>0$ such that for all $g\in G$ $$\log|g.v| - \log|g.w| \geq -\delta;$$ we call this condition \emph{analytic semistability}. It follows that boundedness of the Mabuchi functional on this space of Fubini--Study metrics can be characterised in terms of semistability of the pair $[v,w]$.

We begin by generalising Paul's theory to pairs admitting automorphisms, leading to the notion of a \emph{polystable} pair. Loosely, for  a maximal torus $T^{\C}$ lying in the automorphism group of the pair $[v,w]$, we define polystability of the pair by considering the geometry of a point in an associated Hilbert scheme whose underlying variety is defined by taking the closure of an associated $T^{\C}$-orbit. We also introduce is a natural version of coercivity which incorporates the $T^{\C}$-action, and hence a notion of \emph{analytic polystability} of $[v,w]$. We finally define \emph{numerical polystability} through arcs; in this context, an arc is  simply a morphism $\bbc\llparenthesis z\rrparenthesis \to G$, or equivalently an element of $G(\bbc\llparenthesis z\rrparenthesis)$. We associate a numerical invariant $\nu(\rho,[v,w])$ to $\rho$, extending the definition given by Wang and Donaldson for a single vector \cite{xiaowei-height, donaldson:bstab}, and use this to define numerical polystability by again incorporating the $T^{\C}$-action. In the polystable setting (but not the semistable or stable settings), as a hypothesis we assume a norm functional is proper, which is automatic in our applications. Our main result in this context is the following:

\begin{theorem}\label{arcHM} The following are equivalent:
\begin{enumerate}[(i)]
\item the pair $[v,w]$ is polystable;
\item the pair $[v,w]$ is analytically polystable;
\item the pair $[v,w]$ is numerically polystable.
\end{enumerate}
\end{theorem}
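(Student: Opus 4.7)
The plan is to prove the three statements equivalent cyclically, (ii) $\Rightarrow$ (iii) $\Rightarrow$ (i) $\Rightarrow$ (ii), using Paul's original semistability theorem (the trivial-$T^{\C}$ case) as a black box and treating the maximal torus $T^{\C}$ as a bookkeeping device that tracks the one direction in which the analytic norm fails to be coercive. At the algebraic level the valuative criterion of properness for a Hilbert scheme parametrising $T^{\C}$-orbit closures replaces the usual Hilbert--Mumford criterion; at the analytic level an asymptotic analysis of $\log\|\rho(z).v\|-\log\|\rho(z).w\|$ via a Cartan/Iwahori decomposition of $\rho(z)$ translates bounds for $g\in G$ into bounds for $\rho \in G(\Coft)$.

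For (ii) $\Rightarrow$ (iii), I would take an arc $\rho \in G(\Coft)$ and analyse $\log\|\rho(z).v\|-\log\|\rho(z).w\|$ as $|z|\to 0$. Writing $\rho(z)=k(z)\lambda(z)u(z)$ with $\lambda$ landing in a maximal torus and $u(z)$ bounded in the relevant operator norm, the leading term of $\log\|\rho(z).v\|-\log\|\rho(z).w\|$ in $-\log|z|$ is precisely $\nu(\rho,[v,w])$, and the leading term of the reduced norm evaluated along the arc is the arc norm $\|\rho\|_{T^{\C}}$. The analytic coercivity in (ii), applied along $\rho(z)$ and then passed to the leading coefficient in $-\log|z|$, directly produces the required strict lower bound $\nu(\rho,[v,w])\geq \epsilon\|\rho\|_{T^{\C}}$.

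For (iii) $\Rightarrow$ (i), I would argue contrapositively using the valuative criterion. If polystability fails, the orbit closure $\overline{G.[v,w]}$ meets $\pr(0\oplus W)$ (or, in the refined formulation, contains a point outside the $T^{\C}$-orbit closure of $[v,w]$); the valuative criterion applied to the map $G/T^{\C}\to \pr(V\oplus W)$ then yields an arc $\rho\in G(\Coft)$ whose generic fibre is in $G.[v,w]$ but whose special fibre witnesses the degeneration. Computing $\nu(\rho,[v,w])$ at this specialisation shows it is non-positive (it picks up the weight towards $\pr(0\oplus W)$), and properness of the norm is used exactly to ensure that the $T^{\C}$-correction in the definition of numerical polystability cannot absorb the defect, contradicting (iii). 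Conversely (i) $\Rightarrow$ (ii) is obtained by reducing to Paul's semistability theorem: the assumption of polystability implies semistability of an auxiliary pair on a suitable $T^{\C}$-quotient (constructed by factoring out the weight spaces of the torus in $V$ and $W$), whence Paul's theorem gives a bound $\log\|g.v\|-\log\|g.w\|\geq -\delta$ modulo the $T^{\C}$-direction, and the properness hypothesis on the norm upgrades this to honest coercivity with respect to $J_T$.

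The hard part, I expect, is the implication (iii) $\Rightarrow$ (i): the valuative criterion delivers an arc, but matching the algebraic degeneration with the correct arc-theoretic norm on $G(\Coft)/T^{\C}(\Coft)$ requires a delicate manipulation of the Cartan decomposition combined with careful use of the properness of the norm functional. The implication (i) $\Rightarrow$ (ii) is subtler than the classical Kempf--Ness argument because the presence of stabilisers forces one to quotient by $T^{\C}$ while preserving enough of the Hermitian geometry on $V\oplus W$ to run Paul's analytic argument; verifying that this quotient construction is compatible with the pairing between $v$ and $w$ is the second nontrivial input.
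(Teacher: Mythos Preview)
Your (ii) $\Rightarrow$ (iii) is essentially right (slope formulas along arcs, as in the paper's Lemma \ref{lem:slope} and Proposition \ref{prop:slopeequivfindim}), though the Cartan--Iwahori decomposition is unnecessary and misleading: slopes are computed directly in coordinates, and Remark \ref{rem:hoskins} points out that the Cartan--Iwahori reduction is exactly the step that \emph{fails} for pairs.

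The real gap is that your (iii) $\Rightarrow$ (i) and (i) $\Rightarrow$ (ii) do not engage with the actual definition of polystability (Definition \ref{def:polystable}). Polystability is \emph{not} that $\overline{G.[v,w]}$ avoids $\pr(0\oplus W)$, nor anything about points outside a $T^{\C}$-orbit closure in $\pr(V\oplus W)$. It is a condition in the \emph{Hilbert scheme} of $T^{\C}$-orbit closures: setting $u=[e^{\otimes\deg V}\otimes v^{\otimes k+1},\,w^{\otimes k+1}\otimes v]$ and $C_u=\overline{T^{\C}.u}$, one asks that $\overline{G^H.[C_u]}\cap\Hilb_{\des}=\varnothing$. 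The whole point of this formulation is that when it fails, the destabilising arc in $G^H$ (produced by the valuative criterion applied to $\Hilb$, not to $G/T^{\C}\to\pr(V\oplus W)$) drags the \emph{entire} orbit closure $C_u$ into the bad locus; hence along this arc \emph{every} twist $\rho_\xi$ has its specialisation in $\pr(0\oplus(W^{\otimes k+1}\otimes V)^{H^{\C}})$, which is precisely what forces the weight $(k+1)\nu(\rho,[v,w])-\|(\rho,[v])\|_{T^{\C}}$ to be negative with the \emph{infimised} norm. Your proposed arc from $G/T^{\C}$ would give no such control over all twists. Likewise your (i) $\Rightarrow$ (ii) via a $T^{\C}$-quotient of $V\oplus W$ is not workable: there is no good way to descend the Hermitian structure and still run Paul's argument. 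The paper instead proves the contrapositive directly: a coercivity-violating sequence $g_j$ with norm-minimisers $t'_j$ sends $t'_j.g_j(u)$ into the bad locus, and the monotonicity in $t$ then forces \emph{all} $t_j.g_j(u)$ there, so $[C_{g_j(u)}]\to\Hilb_{\des}$.
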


This gives a complete numerical  understanding of polystability of pairs through arcs, as well as through the behaviour of associated log norm functionals. We also prove versions of this result for semistability and stability, in the sense of Paul. The equivalence of $(i)$ and $(ii)$ is due to Paul in the setting of semistability, while the equivalence with $(iii)$ is new  even in the semistable case. The equivalence of numerical semistability and semistability is relatively straightforward, with the polystable case---which is the most technical part of our work---being  more challenging.

A semistable version of Theorem \ref{arcHM}---giving a numerical characterisation of semistability of pairs---has been desired in the field for many years. Previous strategies towards proving Theorem \ref{intromainthm} have aimed to prove a Hilbert--Mumford criterion for semistability of pairs, characterising semistability through weights associated to one-parameter subgroups of $G$. In light of recent examples due to Paul--Sun--Zhang \cite{paulsunzhang} (which inspired our work), this is impossible in general: semistability of pairs cannot be characterised through one-parameter subgroups\footnote{Their examples imply that all prior work towards Theorem \ref{intromainthm} using test configurations is erroneous.}. Thus Theorem \ref{arcHM} provides an optimal numerical criterion for polystability of pairs. 

The usage of arcs in the classical Hilbert--Mumford theorem in geometric invariant (GIT) goes back to Mumford's original work  \cite[Section 2.1]{book:git} and was highlighted by Donaldson \cite[Section 2.1]{donaldson:bstab}. Theorem \ref{arcHM} can thus further be thought of as an optimal replacement for the classical Hilbert--Mumford criterion that applies much more generally, both for differences of ample line bundles (as is the case for our applications) and for arbitrary (complex) linear algebraic groups, that may not be reductive. 

Theorem \ref{intromainthm} follows from a combination of Paul's work and Theorem \ref{arcHM}, once we identify the weight of an arc $(\X,\L)$ with the weight of the associated arc in $G$. To do this, we argue indirectly and identify the non-Archimedean Mabuchi functional as the quantity governing the asymptotics of the Mabuchi functional along the arc, building on work of Phong--Ross--Sturm \cite{phongrosssturm} and Boucksom--Hisamoto--Jonsson \cite{bhj:asymptotics}. 

The point of the Hilbert--Mumford criterion in GIT is to render stability (in the sense of GIT) practical and effective: it can be checked in explicit examples. In this way, our work similarly makes Paul's theory of stability of pairs practical and effective: most results on K-stability of general polarised varieties use the intersection-theoretic formulation of Wang and Odaka \cite{xiaowei-height, odaka2}, which is the one that we employ also for arcs. For example, Odaka's technique applies almost \textit{verbatim} to show that Calabi--Yau varieties with log terminal singularities are uniformly K-stable with respect to arcs \cite{odaka-2}. Hence by a variant of Theorem \ref{arcHM} proven here, the associated pair is stable in the sense of Paul for any embedding in projective space when the variety is in addition smooth (see Paul \cite[Corollary 1.2]{paul:hyperdiscriminants} for a more analytic proof of this statement). 

\subsection*{Reductivity} We end with one further application of our techniques. Paul has proven \cite[Proposition 4.10]{paul-stablepairs-13} that, in general, a stable pair has finite automorphism group. Thus it follows from Theorem \ref{intromainthm} and Paul's work that smooth polarised varieties that are uniformly K-stable with respect to arcs have finite automorphism group. We give a proof of this statement that uses arcs more directly, applies further to singular varieties, and generalises to give reductivity statements instead assuming uniform K-polystability with respect to arcs. 

To state our result, denote by $\Aut_{0}(X,L)^{T^{\C}}$ the centraliser of the maximal torus inside the connected component $\Aut_0(X,L)$ of the identity in the automorphism group $\Aut(X,L)$ of $(X,L)$, with our definition of uniform K-polystability implying  all arcs  considered are $T^{\C}$-invariant. 

\begin{theorem}\label{intro:automs}
If $(X,L)$ is uniformly K-polystable with respect to arcs, then the group $\Aut_0(X,L)^{T^{\C}}$ is reductive. In particular, if $(X,L)$ is uniformly K-stable with respect to arcs, then its automorphism group is finite.
\end{theorem}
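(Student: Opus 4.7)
I argue by contradiction. Assume $G := \Aut_0(X,L)^{T^{\C}}$ is not reductive, so its unipotent radical $R_u(G)$ is non-trivial; pick a non-zero nilpotent element $\xi \in \Lie R_u(G) \subset \Lie G$. Since $\Lie T^{\C}$ consists of semisimple elements, $\xi \notin \Lie T^{\C}$. Define an arc $\rho \in G(\C\llparenthesis z \rrparenthesis)$ by $\rho(z) := \exp(z^{-1}\xi)$. Via the action $G \hookrightarrow \Aut(X,L)$, this determines an arc $(\X_\rho, \L_\rho)$ of $(X,L)$ in the sense of Section \ref{sec2}: the total space is the trivial family $X \times \Spec\C\llbracket z \rrbracket$ equipped with the trivialisation over $\Spec\C\llparenthesis z \rrparenthesis$ obtained by twisting the identity by $\rho$.

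The argument then splits in two. First, $\DF(\X_\rho, \L_\rho) = 0$: a projective compactification of $(\X_\rho, \L_\rho)$ over $\bbp^1$ is isomorphic as a polarised family to $(X \times \bbp^1, L \boxtimes \scO)$ via an explicit gluing built from $\rho$ itself, so the intersection numbers defining $\DF$ agree with those of the trivial arc, hence vanish. Second, $\|(\X_\rho, \L_\rho)\|_{T^{\C}} > 0$: by definition, the $T^{\C}$-reduced norm vanishes precisely when $\rho$ coincides, up to multiplication by some $\tau \in T^{\C}(\C\llparenthesis z \rrparenthesis)$, with an arc in $G(\C\llbracket z \rrbracket)$. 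Any such $T^{\C}$-twist affects only the $\Lie T^{\C}$-component of the pole structure of $\rho$ at $z = 0$, whereas the only pole of $\rho$ is $z^{-1}\xi$ with $\xi$ transverse to $\Lie T^{\C}$, so no cancellation is possible. Together these two facts contradict uniform K-polystability.

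For the second assertion, uniform K-stability first forces $T^{\C} = \{e\}$: any $1$-parameter subgroup $\lambda \subset T^{\C}$ and its inverse $\lambda^{-1}$ give non-trivial product arcs whose $\DF$'s are the Futaki invariants of $\lambda$ and $-\lambda$, which are opposite in sign; uniform K-stability then demands both be strictly positive, forcing $\lambda$ trivial. The main statement then yields that $\Aut_0(X,L) = \Aut_0(X,L)^{\{e\}}$ is reductive and admits no non-trivial torus, hence is finite.

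The main obstacle is the rigorous verification of the second claim above. Making the ``pole analysis'' precise requires expressing the $T^{\C}$-reduced norm through concrete objects such as non-Archimedean filtrations on $\bigoplus_r H^0(X, rL)$ or the non-Archimedean $J_T$-functional, and then invoking a Jordan decomposition in $G(\C\llparenthesis z \rrparenthesis)$ relative to the torus $T^{\C}$ to isolate the nilpotent part that cannot be cancelled by any $T^{\C}$-twist. A subsidiary technicality, especially when $X$ is singular, is making precise how an arc $\rho \in G(\C\llparenthesis z \rrparenthesis)$ produces an arc $(\X_\rho, \L_\rho)$ in the intersection-theoretic formalism of Section \ref{sec2}, and confirming that its $\DF$ may be computed on the trivial compactification as claimed.
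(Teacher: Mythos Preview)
Your overall strategy---construct an arc from a unipotent element of $\Aut_0(X,L)^{T^{\C}}$, show its Donaldson--Futaki invariant vanishes, show its reduced norm is positive, and derive a contradiction---matches the paper's. Your argument for $\DF=0$ via triviality of the compactification is a legitimate alternative to the paper's route (which instead uses the inversion formula $\DF(\rho^{-1})=-\DF(\rho)$ for arcs in $\Aut(X,L)$, combined with K-semistability). Your argument works precisely because $\xi$ is nilpotent: $\rho(z)=\exp(z^{-1}\xi)$ is then a \emph{polynomial} in $z^{-1}$, hence extends regularly over $\pr^1\setminus\{0\}$, and this extension furnishes the isomorphism $\X_{\pr^1}\cong X\times\pr^1$ you assert. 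This step would fail for semisimple $\xi$, which is why product test configurations can have nonzero Futaki invariant.

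The genuine gap is exactly where you locate it, but your proposed route to closing it contains an error. Your claim that ``the $T^{\C}$-reduced norm vanishes precisely when $\rho$ coincides, up to multiplication by some $\tau\in T^{\C}(\C\llparenthesis z\rrparenthesis)$, with an arc in $G(\C\llbracket z\rrbracket)$'' is not the definition and is not established anywhere: the reduced norm is an infimum over \emph{one-parameter subgroups} $\lambda\in N_{\Z}$ (or their real extensions), not over arbitrary $\tau\in T^{\C}(\field)$, and even for the unreduced norm the equivalence ``$\|\rho\|=0\iff\rho\in G(\ring)$'' is a nontrivial statement about models that you would need to prove. The paper does not attempt such a characterisation. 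Instead it works directly with the norm as a weight on the pair $[v,e^{\otimes\deg V}]$, where $v$ is the Chow point: since $\rho_+$ is unipotent, every character of $(\C,+)$ is trivial, so $\rho_+(z)\cdot v=v$ and the Chow weight of $\rho_+$ vanishes; the entire norm $\|\rho_+\|$ is then carried by the matrix factor $e^{\otimes\deg V}$, which one computes explicitly to be positive. For the polystable case, the paper shows that twisting by $\lambda\in N_{\Z}$ leaves the Chow weight equal to that of $\lambda$ alone (again because $\rho_+$ fixes $v$), while a block-matrix estimate gives that the matrix weight of $\rho_+\circ\lambda$ dominates that of $\lambda$; hence $\|\rho_+\circ\lambda\|\geq\|\lambda\|$, and taking the infimum yields $\|\rho_+\|_{T^{\C}}\geq\|\xi\|>0$ for the infimising $\xi\in N_{\R}$. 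This Chow-point argument is the missing idea in your proposal: the Jordan decomposition in $G(\field)$ you gesture at is not needed and would not by itself give control over the \emph{norm functional}, which is defined through specific representations rather than through abstract pole orders in $\Lie G$.
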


We emphasise here that we allow $(X,L)$ to be singular. The group $\Aut_{0}(X,L)^{T^{\C}}$ is often called the Cartan subgroup associated to the maximal torus, and the result further implies that $\Aut_{0}(X,L)^{T^{\C}}$ is actually isomorphic to $T^{\C}$ itself. In practice, this means we can---for example---rule out the automorphism group taking the form $(\C,+)$ or $(\C,+) \oplus \C^*$, but not $\C^*\ltimes (\C,+)$.  In the  Fano setting, the second part of Theorem \ref{intro:automs} is due to Blum--Xu \cite{kmod:separatedness}, who prove the stronger statement that K-stable Fano varieties have finite automorphisms (though we do not expect that this result holds for general polarised varieties for similar reasons as to why the Yau--Tian--Donaldson conjecture must employ a stronger notion of K-stability). Alper--Blum--Halpern-Leistner--Xu \cite{kmod:reductivity} have proven K-polystability of a Fano variety implies reductivity of its automorphism group, which is in the same direction as, but stronger than, Theorem \ref{intro:automs}.
 
Theorem \ref{intro:automs} is one of extremely few general results around any version of K-stability in the non-Fano setting. The constraint on the automorphism group provided by Theorem \ref{intro:automs} is further one of a handful of results needed in the construction of moduli of polarised varieties through uniform K-polystability with respect to arcs, and gives perhaps only the second result in this direction after Odaka's work \cite{odaka} on the singularities of K-semistable varieties (with respect to test configurations). We remark that Donaldson's proof \cite{donaldson:algebraic} of Zariski openness of the K\"ahler--Einstein condition assuming finiteness of automorphisms employs arcs in a crucial way (a result also proven differently in \cite{odaka:moduli}, again using test configurations). We therefore hope that the perspective of arcs, which we take here, will prove useful in other aspects of the construction of moduli of varieties.

\subsection*{Acknowledgements.} We thank Hamid Abban, S\'ebastien Boucksom, Siarhei Finski, Yoshinori Hashimoto, Sean Paul, Jacob Sturm and Xiaowei Wang for discussions on this and related topics. The authors are grateful to the Isaac Newton Institute for Mathematical Sciences, Cambridge, for support during the programme ``New equivariant methods in algebraic and differential geometry'' , supported by EPSRC grant number EP/R014604/1, where our work began. RD was funded by a Royal Society University Research Fellowship (URF\textbackslash R1\textbackslash 201041), and  RR was funded by a Knut and Alice Wallenberg Foundation grant.

\subsection*{Notation} Throughout, we fix a smooth polarised variety $(X,L)$; we will occasionally relax the smoothness condition and will explicit state this when doing so. We set $\field:=\bbc\llparenthesis z\rrparenthesis$ the field of complex Laurent series and $\ring:=\bbc\llbracket z\rrbracket$ its valuation ring. For $X$ a scheme defined over $S$ and for $S'\to S$ a morphism, we denote by $X_{S'}$ the basechange of $X$ to $S'$. Thus  we denote by $(X_\field,L_\field)$ the base change of $(X,L)$ to $\field$, and denote by $X(\field)$ the set of $\field$-rational points. Likewise, if $V$ is a vector space over $\bbc$, we denote by $V_\field$ its ground field extension $V\otimes_\bbc \field$ to $\field$.

\section{K-stability and models}\label{sec2}

Here we define uniform K-polystability with respect to arcs, by first reviewing the theory of models in Section \ref{subsect:21}, before developing associated numerical invariants in Section \ref{subsect:22} through intersection theory.

\subsection{Models and automorphisms}\label{subsect:21}

\subsubsection{Generalities on models} 

Let $(X,L)$ be an $n$-dimensional normal polarised variety, so that by definition $L$ is an ample $\Q$-line bundle. A main role in the present work will be played by \emph{models}, which are a class of degenerations of $(X,L)$, generalising the test configurations involved in the traditional approach to K-stability.

\begin{definition}A \textit{model} of $(X,L)$ is the data of:
\begin{enumerate}[(i)]
	\item a scheme $\X$ with a flat, projective morphism $\pi: (\X,\L) \to \Spec \ring$;
    \item a relatively ample $\Q$-line bundle $\L$ on $\cX$;
	\item an isomorphism $(\cX_\field, \cL_\field) \simeq (X_\field, L_\field);$
\end{enumerate}
We call a model  \emph{normal} if $\X$ is normal.  The \textit{central fibre} of the model is the base change $(\cX_0,\cL_0)$.  If $r\L$ is relatively very ample, we further say that $(\X,r\L)$ is of \emph{exponent} $r$. 

\end{definition}

We will often omit the implicit data of the morphism $\pi$ and the isomorphisms in our notation, simply stating that $(\cX,\cL)$ is a model of $(X,L)$. We will also allow $\L$ to be relatively semiample in the above definition; this will be clear from context.

\begin{example}\label{ex:different-models}

Let $(\X_C,\L_C) \to C$ be a flat family over a smooth curve, or over the unit disc, and fix a point $0\in C$ such that the fibres away from $0$ are isomorphic to $(X,L)$. Then, as $0\in C$ is smooth, the local ring of $C$ at $0$ is isomorphic to the ring of power series $\C\llbracket t\rrbracket = \ring$. The restriction $(\X_{\ring},\L_{\ring}) = (\X,\L)\times_C \Spec \ring$ to the formal neighbourhood of $0$ in $C$ then defines a model for $(X,L)$.

Conversely, suppose $(\X,\L)$ is a model. In a formal neighbourhood of the origin, the trivial family $(X_{\C},L_{\C})$ is isomorphic to $(X_{\ring},L_{\ring})$, hence $(X_{\C^*},L_{\C^*})$ contains a formal neighbourhood isomorphic to $(X_{\field},L_{\field})$. Through the isomorphism $(\X_{\field},\L_{\field}) \cong (X_{\field},L_{\field})$ we may then glue $(\X_{\ring},\L_{\ring})$ and $(\X_{\C^*},\L_{\C^*})$ across $(X_{\field},\L_{\field})$ to obtain a family $(\X_{\C},\L_{\C}) \to \C$ satisfying $(\X_{\C},\L_{\C})|_{\ring} \cong (\X,\L)$. We may further glue $(\X,\L)$ to $(X_{\C},L_{\C})$ to obtain a family $(\X_{\pr^1},L_{\pr^1}) \to \pr^1$, with proper total space.\end{example}

This example shows that what we call models are equivalent to Donaldson's notion of an \emph{arc} for $(X,L)$ \cite{donaldson:bstab}. We reserve the terminology of \textit{arcs} for the analogous construction under a group action on projective space, as we will explain below. We use the terminology of models following the standard one in algebraic geometry, in which what we call a model for $(X,L)$ in the present article is a model for the variety $(X_{\field},L_{\field})$.

\begin{example}
\textit{Test configurations} are examples of models \cite[Definition 2.1.1]{donaldson:scalar}, \cite{tian:kems}: they are flat, $\C^*$-equivariant families $(\X,\L) \to \C$ with general fibre $(X,L)$. Much as we describe for models, it is frequently advantageous to compactify a test configuration to a family $(\X_{\pr^1},\L_{\pr^1}) \to \pr^1$ via the $\C^*$-action \cite{xiaowei-height}.
\end{example}

\begin{remark}\label{rem:filtr}Given a model $(\cX,\cL)$, the space of sections $\cV:=H^0(\cX,\cL)$ inherits the structure of a finite type $\ring$-submodule of the $\field$-vector space $V_\field:=H^0(X_\field,L_\field)$, with $\cV\otimes_{\ring}\field$ isomorphic to $V_\field$. Such a submodule is traditionally called a \textit{lattice} (see e.g.\ \cite[Section 1.3.3]{cmor:extension}, \cite[Section 1.7]{boueri}). This submodule induces a norm on $V_\field$, given by 
\begin{equation}\norm[v]_\cV=\inf\{|a|,\,a\in \field,\,v\in a\cV\}.\end{equation}
This is a vector space norm \textit{with respect to the non-Archimedean absolute value} $|\cdot|_\field$ on $\field$, and satisfying the \textit{ultrametric inequality} $$\|v+w\|_{\scV}\leq\max(\|v\|_{\scV},\|w\|_{\scV}).$$
It furthermore admits an ultrametric orthogonal basis $(v_i)_i$, in the sense that
$$\norm[\sum a_i v_i]_\cV=\max_i |a_i|_\field\norm[v_i]_\cV.$$
If $(v_i)_i$ is an $\ring$-basis of the submodule $\cV$, then it is furthermore an ortho\textit{normal} basis for $\norm_\cV$ by \cite[Lemma 1.28]{boueri}.

    This norm construction is the natural generalisation, in the setting of models, of the notion of filtration associated to a test configuration. Indeed, if $(\cX,\cL)$ is a test configuration, it classically induces a \textit{filtration} of the section ring $\oplus_{r\geq 0} H^0(X,rL)$ given by
    $$\cF^\lambda H^0(X,rL)=\{s\in H^0(X,rL),\,z^{-\lambda}\tilde s\in H^0(\cX,r\cL)\}$$
    where $\tilde s$ is the $\bbc^*$-invariant section induced by $s$ via the $\bbc^*$-action \cite{wn:tcoko}. Such filtrations yield non-Archimedean norms on each piece of the section ring $\oplus_{r\geq 0} H^0(X,rL)$, but with respect to the \textit{trivial} absolute value $|\cdot|_0$ on $\bbc$, i.e.\ the absolute value given by $|x|_0=1$ for $x\neq 0$. The associated norm in degree $r$ is given \cite[Section 1.1]{bhj:duistermaat} by
    $$\norm[s]_{\cF,r}:=e^{-\sup\{\lambda\in\bbr,\,s\in \cF^\lambda H^0(X,rL)\}}.$$

   Note that $(\bbc,|\cdot|_0)\subset (\field,|\cdot|_\field)$ is a non-Archimedean field extension, so that one can take the ground field extension of $\norm_\cF$ (see e.g.\ \cite[Definition 1.24]{boueri}) to $\field$ as a norm on $V_\field$ with respect to $|\cdot|_\field$. This norm will then coincide in each degree with the model norm induced by the test configuration $(\cX,\cL)$, seen as a model.
\end{remark}

\subsubsection{Models induced by arcs.}\label{sect:norms} We next explain how models can be constructed using the data of an embedding into projective space. Assume that $L$ is very ample, and let $\bbp^N:=\bbp (H^0(X,L))$ with $X\subset \pr^N$ embedded in projective space through global sections of $L$; the discussion is identical for models of higher exponent, where we instead embed $X$ in $\pr(H^0(X,rL))$.  Let $G:=\GL(N+1)$ be the associated general linear group, which we view as a group scheme.

\begin{definition}\label{def:arc} \cite[Section 2.1]{donaldson:bstab}
    An \textit{arc in $G$} is a $\field$-point $\rho\in G(\field)$.
\end{definition}

The following is analogous to the relationship between test configurations and one-parameter subgroups of $G$. The result is due to Donaldson \cite[Section 2.1]{donaldson:algebraic}; as it is central to our work, we report the proof. To state the result, we say that two arcs $\rho, \rho' $ are \emph{equivalent} if the element $\rho\circ\rho' \in G(\field)$ corresponds to a point of $G(\ring)$ through the inclusion $G(\ring) \subset G(\field)$ through the valuative criterion for separatedness.

\begin{proposition}\label{prop:arcsvmodels} There is a one-to-one correspondence between equivalence classes of arcs in $G$ and models of $(X,L)$ of exponent one.
\end{proposition}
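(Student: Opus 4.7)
The plan is to construct the correspondence in both directions explicitly, then verify both that the constructions land where claimed and that they are mutually inverse modulo the stated equivalence. The result is parallel in spirit to the classical bijection between one-parameter subgroups of $G$ and test configurations, with $G(\ring)$ playing the role of the trivial element.

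From an arc to a model: given $\rho \in G(\field)$, base-change the fixed embedding $X \hookrightarrow \pr^N$ to $\field$ to obtain $X_\field \subset \pr^N_\field$, and act by $\rho$ to produce a closed $\field$-subscheme $\rho \cdot X_\field \subset \pr^N_\field$. I would then take the scheme-theoretic closure $\X$ of this subscheme inside $\pr^N_\ring$ and set $\L := \scO_{\pr^N_\ring}(1)|_\X$. Flatness of $\pi \colon \X \to \Spec \ring$ holds automatically: since $\ring$ is a DVR and $\X$ is the scheme-theoretic closure of a subscheme of the generic fiber, $\scO_\X$ has no $\ring$-torsion, which is equivalent to flatness over $\ring$. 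Relative very ampleness of $\L$ is inherited from $\scO_{\pr^N_\ring}(1)$, and the isomorphism $(\X_\field, \L_\field) \simeq (X_\field, L_\field)$ is given by $\rho^{-1}$ acting on $\rho \cdot X_\field = \X_\field$.

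From a model to an arc: given a model $(\X, \L)$ of exponent one, the pushforward $\cV := \pi_* \L$ is a locally free $\ring$-module of rank $N+1$. This uses cohomology and base change: $\L$ is relatively very ample, so $R^i \pi_* \L = 0$ for $i \geq 1$ (up to replacing $\L$ by a large enough twist, or by directly using the assumption of exponent one together with the fact that $L$ was assumed very ample with $H^1(X,L) = 0$ after possibly shrinking to ensure ampleness suffices), and $\cV \otimes_\ring \field \simeq V_\field$ under the model identification. Any $\ring$-basis of $\cV$ compatible with the fixed basis of $V = H^0(X,L)$ over the generic fiber furnishes a closed embedding $\X \hookrightarrow \pr(\cV^\vee) \simeq \pr^N_\ring$, and restricting to the generic fiber produces an embedding $X_\field \hookrightarrow \pr^N_\field$ differing from the base change of the fixed embedding $X \subset \pr^N$ by a unique element $\rho \in G(\field)$.

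Independence of the map from models to arcs under the choice of basis of $\cV$ is immediate, since two $\ring$-bases of $\cV$ differ by an element of $G(\ring)$, producing the stated equivalence on the level of arcs; conversely, arcs differing by an element of $G(\ring)$ have closures related by a fiberwise automorphism of $\pr^N_\ring$, yielding isomorphic models. That the two constructions are mutually inverse follows from the uniqueness of the scheme-theoretic closure: the closure of the generic fiber of a flat $\ring$-subscheme of $\pr^N_\ring$ is the subscheme itself, so closing up and then re-reading off the embedding recovers $\rho$ modulo $G(\ring)$. I expect the main technical obstacle to be the verification that $\cV = \pi_* \L$ is locally free of the correct rank; this is standard but requires care in invoking cohomology-and-base-change for the proper flat morphism $\pi$ together with the vanishing of $H^1$ of the very ample line bundle $\L$ on each fiber.
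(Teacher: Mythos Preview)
Your proposal is correct and follows essentially the same approach as the paper: the arc-to-model direction via scheme-theoretic closure is exactly the paper's flat limit argument (phrased there through the Hilbert scheme and the valuative criterion for properness), and the model-to-arc direction via comparing two trivialisations of the free $\ring$-module $\pi_*\L$ is identical. Your worry about $H^1$-vanishing and cohomology-and-base-change is unnecessary here, since over a DVR any finitely generated torsion-free module is free, and flat base change to the generic point already gives the correct rank.
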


\begin{proof} Suppose $\rho$ is an arc in $G$. The group scheme $G$ acts on $\pr^N$, and hence $G(\field)$ acts on $\pr^N(\field)$ through a morphism $G(\field)\times\pr^N(\field)\to\pr^N(\field)$. Thus $G$ acts on the Hilbert scheme $\Hilb_{\pr^N}$ of subschemes of $\pr^N$ with Hilbert polynomial that of $(X,L)$, and $G(\field)$ acts on $\Hilb_{\pr^N}(\field)$. From the $\field$-point $[X_{\field}]$ of $\Hilb_{\pr^N}(\field)$ induced by $[X]\in\Hilb_{\pr^N}$ we therefore obtain a point $\rho([X_{\field}]) \in\Hilb_{\pr^N}(\field)$. By the valuative criteria for properness and separatedness applied to the Hilbert scheme, we have $\Hilb_{\pr^N}(\ring) = \Hilb_{\pr^N}(\field)$, giving a point $[\X] \in \Hilb_{\pr^N}(\ring)$. More concretely, we take the flat limit of $\rho([X_{\field}])$ across zero to obtain a family $\X\to \ring$. Restricting the $\scO(1)$-line bundle from projective space to $\X$ produces a relatively very ample line bundle on $\X$. The identification of $(\X_{\field},\L_{\field})$ with $(X_{\field},L_{\field})$ is then induced by $\rho^{-1}$, producing a model.

Conversely, suppose $\pi: (\X,\L) \to \Spec\ring$ is a model of exponent $r$, so that $\pi_*\L$ is a vector bundle. We wish to produce from this input an arc in $G$, i.e.\ a morphism $\Spec \field \to G$; equivalently, we wish to produce the transition functions for a vector bundle over $\Spec \field$. Choose a trivialisation of $\pi_*\L \cong \Spec \ring \times H^0(X,L)$ of $\pi_*\L$. The isomorphism $(\X_{\field},\L_{\field})\cong (X_{\field},L_{\field})$ involved in the definition of a model induces a trivialisation of $\pi_*\L$ over $\Spec\field$, that is: an isomorphism $(\pi_*\L)_{ \field} \cong \Spec \field \times H^0(X,L)$. Thus we have two separate trivialisations of the vector bundle $\pi_*\L$ over $\Spec\field$, yielding an element of $G(\field)$, as required. This is an inverse to the previous construction, concluding the proof.\end{proof}

The result extends to models of exponent $r$ and arcs in the corresponding general linear group in the same manner. Following this result, we will pass freely between arcs and their associated models.

\subsubsection{Models induced by arcs of automorphisms.}\label{sect:normsaut} We next involve the connected component $\Aut_0(X,L)$ of the identity inside the automorphism group $\Aut(X,L)$ of $(X,L)$. This is a linear algebraic group that we may write as $$\Aut_0(X,L) = H^{\C} \ltimes U,$$ with $H^{\C}$ a maximal reductive subgroup of $\Aut(X,L)$, which is the complexification of $H$, and $U$ unipotent. Fix in addition a maximal complex torus $T^{\C}\subset H^{\C}$. 

\begin{definition}
We say a model $(\X,\L)$ is $H^{\C}$\emph{-equivariant} if there is a $H^{\C}$-action on $(\X,\L)$ making $\pi: (\X,\L) \to \Spec\field$ a $H^{\C}$-invariant morphism and such that the restriction of the action of $H^{\C}$ to $(\X_{\field},\L_{\field}) \cong (X_{\field},L_{\field})$ agrees with the action of $H^{\C}$ on $ (X_{\field},L_{\field})$. \end{definition}

Equivariant  models correspond to arcs in the centraliser $\GL(H^0(X,rL))^{H^{\C}}$, by a variant of Proposition {prop:arcsvmodels}. Equivariant models may be twisted by arcs in $H^{\C}$. For our purposes, we will only need to twist  by one-parameter subgroups $\lambda: \C^*\hookrightarrow T^{\C}$. The one-parameter subgroup $\lambda$  induces an isomorphism $X\times\C^*\to X\times \C^*$ defined by $(x,t) \to (\lambda(t).x, t)$, and hence by restriction an isomorphism which we write $$\phi_{\lambda}: X\times\Spec\field\to X\times \Spec\field.$$ Consider an $H^{\C}$-equivariant model $(\X,\L)$ of $(X,L)$, and denote by $\psi: (\X_{\field},\L_{\field}) \to (X_{\field},L_{\field})$ the associated identification.

\begin{definition}
    We define the \textit{twist} $(\cX_{\lambda},\cL_{\lambda})$ of $(\cX,\cL)$ by $\lambda$ to be $\pi: (\X,\L) \to \Spec \field$ with identification  $(\X_{\field},\L_{\field}) \to (X_{\field},L_{\field})$ given by $\phi_{\lambda}\circ\psi$.
\end{definition}

\begin{example}\label{ex:1ps}
Suppose $(\X,\L)$ is a $H^{\C}$-equivariant test configuration, with $\C^*$-action induced by $\gamma: \C^* \to \Aut(\X,\L)$. Then by hypothesis $\gamma$ and $\lambda$ commute, and the twist of $(\X,\L)$ is simply the same variety $(\X,\L)$ with $\C^*$-action $\gamma\circ\lambda = \lambda \circ \gamma$; this twisting procedure goes back to Sz\'ekelyhidi \cite{szekelyhidi-blms} and was further developed by Hisamoto \cite{hisamoto:toric} and Li \cite[Sections 3.1, 3.2]{chili:guniform}.
\end{example}

\subsection{K-stability with respect to models}\label{subsect:22} We next associate numerical invariants associated to models, \textit{via} intersection theory. We therefore first develop an intersection theory on $\X$, which requires particular care due since $\X$ is not proper. We will explain two equivalent approaches to this, beginning with an approach based on restricting to the (proper) central fibre. 

\begin{definition}\label{def:dominant}
We say that a model is \emph{dominant} if the rational map $\X \dashrightarrow X_{\ring}$ induced by the identification $ \X_{\field}\cong X_{\field}$ extends to a morphism $\X\to X_{\ring}$
\end{definition}

To any $\ring$-scheme $\X$ with an identification $\X_{\field} \cong X_{\field}$ (such as a model), we may find another $\ring$-scheme $\X'$ with suitably compatible morphisms $\sigma: \X'\to\X$ and $\X'\to X_{\ring}$, by passing to a resolution of indeterminacy of $\X \dashrightarrow X_{\ring}$. We thus assume that $(\X,\L)$ is dominant in defining various intersection numbers, with $\alpha: \X \to X_{\ring}$ the associated morphism.

Given $\Q$-line bundles $\L_0,\ldots,\L_n$ on $\X$ extending $\Q$-line bundles $L_0,\ldots,L_n$ on $X$, we wish to define the intersection number $\L_0\cdot\ldots\cdot\L_n$. For each $j=0,\ldots,n$, as the restriction $(\L_j)_{\field}$ agrees with $(\alpha^*L_j)_{\field}$ over $\X_{\field}$, we may represent their difference as $$\L_j - \alpha^*L_j = D_j$$ for a $\Q$-divisor $D_j$ supported on $\X_0$. In particular, the intersion of $n$ $\Q$-Cartier divisors with $D_j$ is well-defined, in the usual manner.

\begin{definition} We define \begin{align*}
L_0\cdot\ldots\cdot L_{n-1}\cdot \L_n := L_0|_{D_n}\cdot\ldots\cdot L_{n-1}|_{D_n},
\end{align*}
where $L_j$ denotes the pullback of $L_j$ to $\X$. By induction, we define\begin{align*}
&L_0\cdot\ldots\cdot L_{k-1}\cdot L_{k}\cdot \L_{k+1}\cdot\ldots\cdot \L_n  \\ &= L_0\cdot\ldots\cdot L_{k}\cdot L_{k+1}\cdot \L_{k+2}\cdot\ldots\cdot \L_n + L_0|_{D_{k+1}}\cdot\ldots\cdot \cdot L_{k}|_{D_{k+1}}\cdot L_{k+2}|_{D_{k+1}}\cdot\ldots\cdot \L_n|_{D_{k+1}}.
\end{align*}
\end{definition} 

This intersection number is symmetric in the $\L_j$ and independent of choice of resolution of indeterminacy, hence well-defined even when $\X$ is not dominant.

The second approach, leading to the same numerical invariant, is to canonically extend $(\X,\L_0,\ldots,\L_n)$ to $(\X_{\pr^1},\L_{0,\pr^1}, \ldots,\L_{n,\pr^1})$ following the procedure outlined in Example \ref{ex:different-models}; as $\X_{\pr}^1$ is projective we may calculate intersection numbers on $\X_{\pr^1}$ in the usual manner. The resulting values will agree with the above: $$\L_0\cdot\ldots\cdot\L_n = \L_{0,\pr^1}\cdot\ldots\cdot\L_{n,\pr^1}.$$ Thus this generalises to our model setting the usual approach to intersection numbers over test configurations  \cite{xiaowei-height, odaka2}, see also \cite[Section 6.6]{bhj:duistermaat}.

As previously mentioned, this intersection theory will allow us to define numerical invariants associated to a model $(\X,\L)$. Some of these will involve the canonical classes of $X$ and $\tilde X$; as we assume that $X$ is normal, its canonical class $K_X$ exists as a Weil divisor $K_X$. When $\X$ is further normal, its canonical class $K_{\X}$ similarly exists as a Weil divisor; if $\X$ is not normal, we pass to its normalisation  $(\tilde \X,\tilde \L)$ (with $\tilde\cL$ the pullback of $\cL$ to $\tilde X$):

\begin{lemma}If $(\cX,\cL)$ is a model of $(X,L)$, then its normalisation $(\tilde \X,\tilde \L)$ is a model for $(X,L)$.  
\end{lemma}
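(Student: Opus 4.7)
The plan is to verify the three defining properties of a model for the triple $(\tilde\cX, \tilde\cL, \text{identification})$, where the identification $(\tilde\cX_\field, \tilde\cL_\field) \cong (X_\field, L_\field)$ will be induced via the normalisation map $\nu: \tilde\cX \to \cX$ composed with the given identification of $(\cX,\cL)$. The main points to check are: $\tilde\cX \to \Spec\ring$ is flat and projective; $\tilde\cL$ is relatively ample; and the generic fibre is identified with $(X_\field, L_\field)$.

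First I would observe that $\cX$ is automatically reduced. Indeed, since $X$ is normal, $X_\field$ is reduced; since $\cX$ is flat over the DVR $\ring$, it is $z$-torsion free, and a standard argument shows flat plus reduced generic fibre implies reduced. Then $\ring$ being excellent implies $\cX$ is excellent, so the normalisation $\nu: \tilde\cX \to \cX$ is a finite morphism. Composition of finite and projective being projective gives projectivity of $\tilde\cX\to \Spec \ring$. For flatness over the DVR, it suffices to note that $\tilde\cX$ is reduced (normal), hence its structure sheaf has no $z$-torsion since the $z$-torsion part would be supported purely on the central fibre but would also inject into the generic fibre $\tilde\cX_\field$, which is $\field$-flat.

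Next I would handle $\tilde\cL := \nu^*\cL$. Relative ampleness is preserved under pullback by a finite (affine) morphism, so $\tilde\cL$ is relatively ample over $\Spec\ring$; that it is still a $\Q$-line bundle is automatic since pullback of a $\Q$-line bundle is a $\Q$-line bundle.

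Finally, for the identification of the generic fibre, the key observation is that $X$ normal implies $X_\field = X\times_\bbc \Spec\field$ is normal (normality is preserved under smooth, in particular étale/flat-with-normal-fibres, base change, and here the base change is $\Spec\field\to\Spec\bbc$ which is regular). Through the isomorphism $\cX_\field \cong X_\field$ the scheme $\cX_\field$ is therefore already normal, so $\nu$ is an isomorphism over the generic fibre. Composing with the identification from the model structure on $(\cX,\cL)$ yields the required $(\tilde\cX_\field,\tilde\cL_\field) \cong (X_\field,L_\field)$. The main subtlety is really just confirming reducedness/flatness compatibility and invoking excellence of $\ring$ to ensure the normalisation is finite; everything else reduces to standard functoriality.
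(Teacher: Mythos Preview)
Your argument is correct and supplies precisely the routine verification the paper omits (the paper simply asserts that ``the proof of this result is straightforward'' and gives no details). One small wording issue: in your flatness argument for $\tilde\cX$, the $z$-torsion does not ``inject into the generic fibre'' but rather maps to zero there; a cleaner phrasing is that the normalisation $\tilde A$ of a reduced flat $\ring$-algebra $A$ sits inside the total ring of fractions $Q(A)$, where $z$ is already a unit (being a non-zero-divisor on $A$), so $\tilde A$ is $z$-torsion free and hence flat over the DVR $\ring$.
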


The proof of this result is straightforward. The Weil divisor  $K_X$ induces a Weil divison on $\X$ defined by taking the proper transform of the associated Weil divisor on $X_\ring$; this corresponds to the pullback when $\X$ is dominant and $K_X$ is $\Q$-Cartier. In addition set $\mu(X,L) = \frac{-K_X.L^{n-1}}{L^n}.$

We are now equipped to define our numerical invariants. In the following, if $\X$ is not normal, the corresponding intersection number is computed on its normalisation. We denote the canonical class of $\X$ by $K_{\X}$, the log canonical class of $\X$ by $$K_{\X}^{\log} = K_{\X} + \X_{0,\red} - \X_0,$$ with $\X_{0,\red}$ the reduction of the central fibre $\X_0$, and $$K_{\X/\pr^1} = K_{\X} - \pi^*K_{\pr^1}, \qquad K_{\X/\pr^1}^{\log} = K_{\X}^{\log} - \pi^*K_{\pr^1}$$ their relative versions.

\begin{definition} We define the:
\begin{enumerate}[(i)]
\item \emph{Donaldson--Futaki invariant} of $(\X,\L)$ to be $$\DF(\X,\L) = \frac{1}{L^n}\left(\frac{n}{n+1}\mu(X,L)\L^{n+1} + \L^n.K_{\X/\pr^1}\right);$$
\item  \emph{non-Archimedean Mabuchi functional} of $(\X,\L)$ to be \begin{align*}
\mM\NA(\X,\L)&=\frac{1}{L^n}\left(\frac{n}{n+1}\mu(X,L)\L^{n+1} + \L^n.K^{\log}_{\X/\pr^1}\right)\\
&=\mH\NA(\X,\L) + \mR\NA(\X,\L)+n(-K_X.L^{n-1})\mE\NA(\X,\L),
\end{align*}
where
\begin{enumerate}[(a)]
	\item $\mE\NA(\X,\L) = \frac{(\L^{n+1})}{(n+1)L^n};$
 	\item $\mR\NA(\X,\L) = \frac{1}{L^n}(\L^n.K_X);$
 \item $\mH\NA(\X,\L) =\frac{1}{L^n}(\L^n.(K_{\X/X\times \pr^1}^{\log}))$.
\end{enumerate}
\end{enumerate}
Note that $\mM\NA(\X,\L) \leq \DF(\X,\L)$, with equality if and only if $\X_0$ is reduced.
    
\end{definition}

Both the Donaldson--Futaki invariant and the non-Archimedean Mabuchi functional will play prominent roles in our work. The Donaldson--Futaki invariant is the generalisation of Donaldson and Wang \cite{donaldson:bstab, xiaowei-height} to arcs of the usual Donaldson--Futaki invariant \cite{donaldson:scalar,tian:kems}, with the non-Archimedean Mabuchi functional generalising to arcs the invariant introduced by Boucksom--Hisamoto--Jonsson \cite{bhj:duistermaat}.

\begin{remark}Each of these terms may be defined explicitly by intersection numbers supported on the central fibre. For example, writing $\L - L = D$ for $D$ supported on $\X_0$ as in our definition of intersection numbers, a simple calculation shows that $$\mE\NA(\X,\L) = \frac{(\L^{n+1})}{(n+1)L^n} =  \frac{1}{(n+1)L^n}\sum_{j=0}^n (\L^j.L^{n-j})|_D.$$
\end{remark}

We also require versions of ``norms'' on the space of models. Throughout, we denote by $N_{\Z}$ the cocharcater lattice of $T^{\C}$, so the lattice of one-parameter subgroups. 

\begin{definition}
We define the  \emph{norm} of $(\X,\L)$ to be 
$$\|(\X,\L)\| = \frac{\L.L^n}{L^n} -\mE\NA(\X,\L) .$$ When $(\X,\L)$ is $H^{\C}$-equivariant, we define the \emph{reduced norm} of $(\X,\L)$ to be 
 $$\|(\X,\L)\|_{T^{\C}} = \inf_{\lambda \in N_{\Z} }\|(\X_{\lambda},\L_{\lambda})\|.$$
\end{definition}

The norm extends the minimum norm of \cite{uniform-twisted} and the non-Archimedean $J$-functional of \cite{bhj:duistermaat} in the case of test configurations, with the reduced norm originating in work of Hisamoto \cite{hisamoto:toric}. These definitions allow us to define the  stability conditions relevant to our work.

\begin{definition} We say that $(X,L)$ is 
\begin{enumerate}[(i)]
    \item \emph{uniformly K-stable with respect to models} if there is an $\epsilon>0$ such that for all models $(\X,L)$ $$\DF(\X,\L) \geq \epsilon \|(\X,\L)\|;$$ 
    \item \emph{uniformly K-polystable with respect to models} if there is an $\epsilon>0$ such that for all $H^{\C}$-equivariant models $(\X,L)$ $$\DF \geq \epsilon \|(\X,\L)\|_{T^{\C}}.$$
\end{enumerate}
\end{definition}

K-semistability is similarly defined by requiring nonnegativity of the Donaldson--Futaki invariant.

\begin{remark}\label{DFMNA}These conditions imply uniform K-stability and uniform K-stability with respect to test configurations, respectively. A simple base change argument implies that the Donaldson--Futaki invariant may be replaced by the non-Archimedean Mabuchi functional in these definitions, much as for test configurations \cite[Proposition 8.2]{bhj:duistermaat}. \end{remark}

\section{A numerical criterion for stability of pairs}\label{sec:pairs}

We change focus and consider Paul's theory of stability of pairs. We begin by discussing the role of arcs in the setting of a pair of group representations, before producing a numerical criterion for Paul's notion of a semistable or stable pair. We then introduce a notion of a polystable pair, and give numerical and analytic characterisations of polystability. In Section \ref{sec:applications} we will then apply the results proven here to K-stability.

\subsection{Arcs and weights}

We consider an algebraic group $G$ acting linearly on a pair of vector spaces $V,W$, and fix a point $(v,w) \in V\oplus W$ with neither $v$ nor $w$ equal to zero. We write $[v,w]$ for the corresponding point in $\bbp(V\oplus W)$. Our motivation is to understand the following condition, whose definition is due to Paul.

\begin{definition}\cite{paul:hyperdiscriminants} We say that the pair $(v,w)$ is \emph{semistable} if $$\overline{G.[v,w]} \cap \pr(0\oplus W) = \varnothing.$$
\end{definition}

\begin{example}\label{ex:git-through-pairs} Suppose $V=\C$ endowed with the trivial $G$-action, and let $v=1$. Then the pair $[v,w]$ is semistable if and only if $0\notin \overline{G.w}$, which is equivalent to $[w]\in\pr(W)$ being semistable in the sense of geometric invariant theory (GIT) \cite{book:git}. Thus semistability of pairs is a generalisation of GIT semistability that involves two $G$-representations rather than one. 
\end{example}

In the subsequent section, we will give a numerical criterion for semistability through arcs. Before doing so, we establish some basic results involving numerical invariants associated to arcs.  Recall from Definition \ref{def:arc} that an arc $\rho$ in $G$ is simply a $\field$-point of $G$, or equivalently a morphism $\Spec\field \to G$. From the action of the group scheme $G$ on $ \pr(V\oplus W)$, we obtain an action of $G(\field)$ on  $(V\oplus W)(\field)$ and $ \pr(V\oplus W)(\field)$ and hence we may consider the $\field$-points $\rho.(v,w)_\field\in (V\oplus W)_\field$ and $\rho.[v,w]_{\field} \in \pr(V\oplus W)(\field)$, with  (for example) $[v,w]_\field$ being the constant morphism $\Spec\field\to \bbp(V\oplus W)$ associated to $[v,w]$, as used in Proposition \ref{prop:arcsvmodels}.

Concretely, we may choose  coordinates $[x_0,\ldots,x_p,y_0,\ldots,y_q]$ for $V\oplus W$ where $\dim V = p+1$ and $\dim W=q+1$. Then, as the field  $\field=\bbc\llparenthesis z\rrparenthesis$ consists of Laurent series, we may write
\begin{equation}\label{eq:weight}
\rho.(v,w)_\field= \sum_{i=0}^{p} a_i\cdot  (x_i)_\field + \sum_{i=0}^{q} b_i\cdot (y_i)_\field
\end{equation}
with $$a_i=a_{0,i} z^{\ord_0(a_i)} + O\left(z^{\ord_0(a_i)}+1\right), \qquad b_i=b_{0,i} z^{\ord_0(b_i)} + O\left(z^{\ord_0(b_i)}+1\right).$$

\begin{definition}\label{def:weight}

We define the \emph{weight} $\nu(\rho,[v,w]) $ of the arc $\rho$ to be $$\nu(\rho,[v,w]) = \min_{i\in \{0,\ldots q\},\,b_i\neq  0}\ord_0(b_i)  - \min_{i\in \{0,\ldots, p\},\,a_i\neq 0}\ord_0(a_i) \in \Z.$$

\end{definition}

In the case of a single vector, this is precisely the invariant introduced by Donaldson \cite[Section 2.1]{donaldson:bstab}. Justifying the notation, note that the weight is independent of the choice of representative of $[v,w]_\field\in \bbp(V\oplus W)_\field$ in $(V\oplus W)_\field$, as scaling $(v,w)_\field\in (V\oplus W)_\field$ by a Laurent series $c_0 z^c + O(z^{c+1})$ will change the weights for $v$ and $w$ both by $c$, leaving their difference unchanged. One further checks this definition to be independent of choices of coordinates for $V$ and $W$. Lastly, we remark that our definition makes sense for an arbitrary element $(v,w)_\field\in V_\field\oplus W_\field$, rather than just the field extension of a pair of vectors in $V\oplus W$, although this will not be needed in our applications.

\begin{example}\label{weight-in-GIT}
For an arc induced by a one-parameter subgroup $\lambda \hookrightarrow G$, and a single vector space $V$ (so taking $W=\C$ with the trivial action as in Example \ref{ex:git-through-pairs}), the weight is simply the usual weight attached to a one-parameter subgroup in GIT \cite[Section 2.1]{book:git}. 
\end{example}

\begin{remark}Consider the \textit{trivial norms} on $V$ and $W$, which are equal to $1$ on $V^\times$ and $W^\times$. These are non-Archimedean norms with respect to the trivial absolute value $|\cdot|_0$ on $\bbc$ (recall Remark \ref{rem:filtr}), and one can take their ground field extension $\norm_{\triv}$ to $V_\field$ and $W_\field$. This extension is characterised as follows: for all bases $(v_i)_i$ of $V$,
$$\norm[\sum a_i (v_i)_\field]_\triv=\max_{i,\,a_i\neq 0} |a_i|_\field;$$
in other words, it admits all (ground field extensions of) bases of $V$ as orthonormal bases; likewise for $\norm_\triv$ on $W$. As in Equation \eqref{eq:weight} we may consider $\rho.(v)_\field\in V_\field$ and $\rho.(w)_\field\in W_\field$; from the fact that $|\cdot|_\field=e^{-\ord_0(\cdot)}$, we then see that
$$\nu(\rho,[v,w])=\log\frac{\norm[\rho.(w)_\field]_\triv}{\norm[\rho. (v)_\field]_\triv}.$$
\end{remark}

The following lemma, which follows from simple calculations, explains the basic properties of the weight and will be useful later:

\begin{lemma}\label{lem:rules}Let $V,V'$ and $W,W'$ be representations of $G$, and let $v,v',w,w'$ belong to each respective vector space. Then,
$$\nu(\rho,[v\otimes v',w\otimes w'])=\nu(\rho,[v,w])+\nu(\rho,[v',w']).$$
	In particular,
	\begin{enumerate}[(i)]
	\item $\nu(\rho,[v,v])=0$;
	\item $\nu(\rho,[v\otimes v',w\otimes v'])=\nu(\rho,[v,w])$;
	\item $\nu(\rho,[v,w])=\nu(\rho,[v,v'])+\nu(\rho,[v',w])$; 
	\item $\nu(\rho,[v,w])=-\nu(\rho,[w,v])$;
	\item $\nu(\rho,[v^{\otimes k},w^{\otimes k}])=k\nu(\rho,[v,w])$.
	\end{enumerate}
\end{lemma}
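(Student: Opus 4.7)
The plan is to establish the tensor-product additivity identity first, as all the listed properties follow from it essentially formally. I would work directly from the coordinate definition of the weight in Definition \ref{def:weight}, though an equivalent argument using the multiplicativity of the trivial norm $\|\cdot\|_\triv$ under tensor products would go through just as well.

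For additivity, I would fix ordered bases $(x_i)$ of $V$ and $(x'_j)$ of $V'$, so that $(x_i \otimes x'_j)$ is a basis of $V \otimes V'$. Writing $\rho.(v)_\field = \sum_i a_i (x_i)_\field$ and $\rho.(v')_\field = \sum_j a'_j (x'_j)_\field$, the diagonal action on the tensor product gives
$$\rho.(v \otimes v')_\field = \sum_{i,j} a_i a'_j (x_i \otimes x'_j)_\field.$$
Setting $m_V := \min_{a_i \neq 0}\ord_0(a_i)$ and defining $m_{V'}$ analogously, the key observation is
$$\min_{\,i,j : a_i a'_j \neq 0\,} \ord_0(a_i a'_j) = m_V + m_{V'}.$$
This holds because $\bbc\llbracket z \rrbracket$ is an integral domain: at minimising indices $i_0, j_0$, the leading-order term of the product $a_{i_0} a'_{j_0}$ is the product of the nonzero leading-order terms of $a_{i_0}$ and $a'_{j_0}$, and therefore nonzero of the expected order. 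Running the same argument on the $W$ side and subtracting yields the main identity.

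The listed properties follow directly: (i) is immediate from the definition, since the two minima coincide; (iv) is the observation that swapping $v$ and $w$ exchanges the two terms in the definition; (iii) is a telescoping identity that reduces to the tautology $(x-y) = (z-y) + (x-z)$ when each weight is written as a difference of minimal orders; (ii) combines additivity with (i), using that $\nu(\rho, [v', v']) = 0$; and (v) follows by induction on $k$ from the main identity. The whole lemma is a direct calculation, and the only minor subtlety is the integral-domain argument guaranteeing no cancellation of leading terms under tensor products; there is no substantive obstacle.
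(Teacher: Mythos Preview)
Your proposal is correct and matches the paper's approach: the paper omits the proof entirely, stating only that the lemma ``follows from simple calculations,'' and your argument spells out precisely those calculations from the coordinate definition of the weight. The integral-domain observation you flag is indeed the only point requiring any care.
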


We may also interpret the weight of an arc as in ``intersection number'', as follows. Since $G$ acts on $V$ and $W$, we obtain from an arc $\rho$ a pair of $\field$-points $\rho.[v]_{\field}\in\pr(V)_\field$ and $\rho.[w]_\field\in\pr(W)(\field)$. Since projective space satisfies $\pr(V)({\field}) = \pr(W)({\ring})$ by separatedness and properness (and similarly for $\pr(W)$),   they extend to points $\mathfrak{v}\in\bbp(V)(\ring)$ and $\mathfrak{w}\in\bbp(W)(\ring)$, yielding by restriction of $\cO(1)$ a pair of polarised $\ring$-points $(\mathfrak{v},\scO_{\pr(V)}(1))$ and $(\mathfrak{w},\scO_{\pr(W)}(1))$. By construction there is an identification with restriction to $\Spec \field$ $$(\mathfrak{v},\scO_{\pr(V)_\ring}(1)) \cong (\rho.[v]_{\field},\scO_{\rho.[v]_{\field}})$$ with $\scO_{\rho.[v]_{\field}}$ the structure sheaf (namely the trivial line bundle); and similarly for $\mathfrak{w}$. 

Focusing on $(\mathfrak{v},\scO_{\pr(V)_\ring}(1))$ for the moment, we may assume that we have a morphism $\pi: \mathfrak{v} \to \rho.[v]_{\field}$ extending the identification above. Thus we may write  $$\scO_{\pr(W)_\ring}(1) - \pi^*(\scO_{\rho.[v]_{\field}}) = D_\mathfrak{v}$$ for $D_\mathfrak{v}$ a divisor supported in $\mathfrak{v}_0$; we will be interested in the degree of this divisor (as a zero-dimensional version of an intersection number). A similar process produces a divisor $D_\mathfrak{w}$ supported in $\mathfrak{w}_0$. 

\begin{lemma}\label{weight-as-intersection}
The weight of the arc $\rho$ satisfies $$\nu(\rho,[v,w]) = \deg D_\mathfrak{v} - \deg D_\mathfrak{w}.$$ 
\end{lemma}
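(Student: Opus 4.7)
The plan is a direct local computation at $z=0$: I would identify both sides of the asserted equality with explicit integers extracted from the coordinates of $\rho.(v,w)_\field$, namely the orders of vanishing appearing in Definition \ref{def:weight}.

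Fix bases as in Equation \eqref{eq:weight}, and set $m_v := \min_{a_i \neq 0} \ord_0(a_i)$ and $m_w := \min_{b_j \neq 0} \ord_0(b_j)$, so that $\nu(\rho,[v,w]) = m_w - m_v$. Choose an index $i_*$ attaining $m_v$. Then $\mathfrak{v}$ can be written explicitly as the $\ring$-section $[z^{-m_v} a_0 : \ldots : z^{-m_v} a_p]$: its coordinates lie in $\ring$ with the $i_*$-th being a unit, so it is a well-defined $\ring$-point of $\pr(V)$ restricting to $\rho.[v]_\field$ over $\Spec \field$, and by valuative separatedness of $\pr(V)$ it is the unique such extension.

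To compute $\deg D_\mathfrak{v}$, I would observe that $x_{i_*}$ restricts to a nowhere-vanishing section of $\scO(1)|_\mathfrak{v}$, taking the unit value $z^{-m_v} a_{i_*} \in \ring^\times$ along $\mathfrak{v}$. Under the identification of $\scO(1)|_{\rho.[v]_\field}$ with $\scO_{\rho.[v]_\field}$ built into the construction preceding the lemma (which sends the restriction of $x_{i_*}$ to $\rho.[v]_\field$ to its coordinate value $a_{i_*}\in\field$), this trivialisation of $\scO(1)|_\mathfrak{v}$ matches, over $\mathfrak{v}_\field$, the section $a_{i_*}$ of $\pi^* \scO_{\rho.[v]_\field}$. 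As a section of the trivial $\ring$-line bundle $\pi^* \scO_{\rho.[v]_\field}$, $a_{i_*}$ vanishes to order $\ord_0(a_{i_*}) = m_v$ at $\mathfrak{v}_0$. Hence $\scO(1)|_\mathfrak{v} \cong \pi^*\scO_{\rho.[v]_\field}(-m_v [\mathfrak{v}_0])$, so $D_\mathfrak{v} = -m_v [\mathfrak{v}_0]$ and $\deg D_\mathfrak{v} = -m_v$.

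The identical computation applied to $w$ yields $\deg D_\mathfrak{w} = -m_w$, and subtracting gives $\deg D_\mathfrak{v} - \deg D_\mathfrak{w} = m_w - m_v = \nu(\rho,[v,w])$ as claimed. Independence of the choice of $i_*$ follows because different valid choices differ by multiplication by a unit in $\ring$, and independence of the choice of representative of the $\field$-point $\rho.[v]_\field$ in $V_\field$ follows because rescaling by an element of $\field^\times$ shifts both $m_v$ and $m_w$ by the same integer. The main subtlety in the argument is pinning down the sign in the generic-fibre line-bundle identification $\scO(1)|_{\rho.[v]_\field} \cong \scO_{\rho.[v]_\field}$: this reduces to consistently tracking conventions on how fibres of $\scO(1)$ are identified via chosen representatives of $\field$-points, and once this is fixed the rest is routine.
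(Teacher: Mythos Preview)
Your proof is correct and follows essentially the same approach as the paper: a direct coordinate computation showing $\deg D_\mathfrak{v}=-\min_i\ord_0(a_i)$ and likewise for $\mathfrak{w}$, then subtracting. The only cosmetic difference is that the paper phrases the computation as finding the smallest integer $m_\mathfrak{v}$ with $z^{m_\mathfrak{v}}\pi^*s$ regular in $\scO(1)|_\mathfrak{v}$ for a trivialising section $s$ of the structure sheaf, whereas you work from the other side, using the coordinate section $x_{i_*}$ to trivialise $\scO(1)|_\mathfrak{v}$ and tracking the order of $a_{i_*}$ in the structure sheaf; these are dual descriptions of the same transition function, and your explicit remark about the sign convention in the generic-fibre identification addresses exactly the point where the two phrasings must be reconciled.
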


\begin{proof}

We construct the divisor $D_\mathfrak{v}$ in more detail. Take a trivialising section $s$ of $\scO_{\rho.[v]_{\field}}$ and consider its pullback $\pi^*s$ to a section of $\pi^*(\scO_{\rho.[v]_{\field}})$ over $\mathfrak{v}$. Choosing a coordinate $z$ on $\mathfrak{v}$, there is an integer $m_\mathfrak{v}$ such that $z^{m_\mathfrak{v}}\pi^*s$ is a trivialising section of $\scO_{\pr(V)_\ring}(1)$ over $\mathfrak{v}$, and the divisor $D_\mathfrak{v}$ is simply the divisor associated to $z^{m_\mathfrak{v}}$, namely $m_\mathfrak{v}[0]$. It is the smallest integer such that  $z^{m_\mathfrak{v}}\pi^*s$ is regular. It follows then from the construction that in this notation, $ \deg D_\mathfrak{v}$ equals $m_\mathfrak{v}$.

 The arc $\rho$ induces a $\field$-point of $\pr(V)$ which is explicitly $\rho.[v]_\field=[a_0:\dots:a_k]$ in the notation of Definition \ref{def:weight} (the $a_i$ being elements of $\field$, so Laurent series), so that this smallest integer $m_\mathfrak{v}$ is $-\min_i (\ord_0(a_i))$. We likewise obtain from $w$ an integer $m_\mathfrak{w}=-\min_i(\ord_0(b_i))$. Using the definition of the weight and the fact that $\deg D_\mathfrak{v}=m_\mathfrak{v}$ concludes the proof. \end{proof}

\begin{remark} This result implies that the weight of an arc generalises also a numerical invariant introduced by Wang \cite[Definition 3]{xiaowei-height}, which he calls the height. Wang considers a single point, rather than a pair, but considers his height as attached to sections of certain fibre bundles, as a version of GIT in families. If one considers an arc as inducing a section of the trivial family $\pr(V) \times\Spec \ring \to \Spec \ring$, then the resulting numerical invariant agrees with that defined by Wang.\end{remark}

\subsection{Semistability of pairs} We next relate semistability to the following numerical condition.

\begin{definition}

We say that $[v,w]$ is \emph{numerically semistable} if for all arcs $\rho$ in $G$ we have $\nu(\rho,[v,w])\geq 0$.

\end{definition}

This is all we need to prove our first main result.

\begin{theorem}\label{thm:HMcriterion} The pair $[v,w]$ is semistable if and only if it is numerically semistable.
\end{theorem}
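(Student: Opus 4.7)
The plan is to use the valuative criterion for properness applied to the closed subvariety $\overline{G.[v,w]}\subset\pr(V\oplus W)$, combined with a direct reading of the central fibre of an arc in terms of the weight from Definition \ref{def:weight}. The core observation is that, writing $\rho.(v,w)_\field=\sum a_i x_i+\sum b_i y_i$ as in \eqref{eq:weight} with $\alpha:=\min_i\ord_0(a_i)$ and $\beta:=\min_i\ord_0(b_i)$, the scalar multiple $z^{-\min(\alpha,\beta)}\cdot\rho.(v,w)_\field$ has coefficients in $\ring$ not all divisible by $z$, hence specialises to a well-defined point of $\pr(V\oplus W)$ at $z=0$. An immediate inspection shows that this central value lies in $\pr(0\oplus W)$ if and only if $\alpha>\beta$, that is, if and only if $\nu(\rho,[v,w])<0$; by separatedness, it coincides with the value at $0$ of the unique $\ring$-extension $\mathfrak{p}\colon\Spec\ring\to\overline{G.[v,w]}$ of $\rho.[v,w]_\field$ guaranteed by properness.

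Granting this, the forward direction is immediate: if $[v,w]$ is semistable and $\rho$ is any arc, then the central fibre $\mathfrak{p}(0)\in\overline{G.[v,w]}$ cannot lie in $\pr(0\oplus W)$, so $\nu(\rho,[v,w])\geq 0$ and $[v,w]$ is numerically semistable. For the converse we argue contrapositively. Assume $\overline{G.[v,w]}\cap\pr(0\oplus W)\neq\varnothing$ and pick a point $[0,w_0]$ in this intersection. Our aim is to exhibit an arc $\rho\in G(\field)$ whose associated $\ring$-extension has central fibre $[0,w_0]$; by the observation above, any such arc automatically satisfies $\nu(\rho,[v,w])<0$, contradicting numerical semistability.

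The existence of this arc is the main technical step, and is a variant of Mumford's trick from \cite[Section 2.1]{book:git}. Since $\overline{G.[v,w]}$ is irreducible and contains $[0,w_0]$, one can find a smooth curve germ $C$ with a marked point $0\in C$ mapping $0\mapsto[0,w_0]$ and sending $C\setminus\{0\}$ into the orbit $G.[v,w]$. As $G$ is smooth and the orbit map $G\to G.[v,w]$ is smooth and surjective, one may lift the punctured morphism $C\setminus\{0\}\to G.[v,w]$, possibly after a ramified base change $z\mapsto z^n$ of $C$ which is absorbed upon passing to the formal disc at $0$, to a map into $G$; restriction to the formal punctured neighbourhood of $0$ yields the required $\field$-point $\rho$. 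This is exactly the step at which the classical Hilbert--Mumford criterion uses reductivity to produce a one-parameter subgroup: by working with arcs in $G(\field)$ rather than one-parameter subgroups, only smoothness of $G$ is needed, which is why Theorem \ref{thm:HMcriterion} applies to arbitrary linear algebraic groups.
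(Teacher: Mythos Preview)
Your proof is correct and follows essentially the same route as the paper's: both directions hinge on the observation that the specialisation of $\rho.[v,w]_\field$ at $z=0$ lands in $\pr(0\oplus W)$ precisely when $\nu(\rho,[v,w])<0$, and the converse is obtained by producing a destabilising arc via the curve-germ-plus-lift argument (the paper simply cites \cite[Section 4.2]{hoskins} and Mumford's original trick, whereas you spell out the smoothness of the orbit map and the possible ramified base change). Your normalisation by $z^{-\min(\alpha,\beta)}$ is in fact slightly cleaner than the paper's choice of $z^{m_{\mathfrak w}}$, but the content is identical.
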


\begin{proof}

Suppose first that $[v,w]$ is semistable and let $\rho\in G(\field)$ be an arc. The identification $\pr(V\oplus W)(\field) = \pr(V\oplus W)(\ring)$ means we may take the specialisation $[\tilde v,\tilde w]$ of $[v,w]$ under $\rho$ (namely the image of $0 \in \Spec \ring$ of the associated $\ring$-point). By construction, $[\tilde v,\tilde w] \in  \overline{G.[v,w]}$ and in particular $[\tilde v,\tilde w] \notin \pr(0\oplus W)$ by semistability. 

We claim that the condition $[\tilde v,\tilde w] \notin \pr(0\oplus W)$ is equivalent to the weight being nonnegative. Let $m_\mathfrak{v}$ and $m_\mathfrak{w}$ be as in the proof Lemma \ref{weight-as-intersection}, so that $\nu(\rho,[v,w]) = m_\mathfrak{v}-m_\mathfrak{w}$. We may multiply the homogeneous coordinates of $\rho.[v,w]_\field$ by $z^{m_\mathfrak{w}}$ (which leaves the point in $\pr(V\oplus W)$ unchanged), yielding entries $$[z^{m_\mathfrak{w}}a_0:\ldots:z^{m_\mathfrak{w}}a_k:z^{m_\mathfrak{w}}b_0:\ldots:z^{m_\mathfrak{w}}b_l].$$
(Here, again, the $a_i$ and $b_i$ are Laurent series in $\field$ and not \textit{a priori} constant coefficients.) From this one can see that, for the specialisation of $\rho.[v,w]$ to lie in $\bbp(0\oplus W)$, the orders of vanishing of the $V$-coordinates must be positive: $\ord(z^{m_\mathfrak{w}}a_i)>0$ for all $i=0,\ldots,k$, or in other words
$$\min_{i\in\{0,\dots,k\},\, a_i\neq 0}\ord_0(z^{m_\mathfrak{w}}a_i)=m_{\mathfrak{w}}+\min_{i\in\{0,\dots,k\},\, a_i\neq 0}\ord_0(z^{m_\mathfrak{w}}a_i)=m_\mathfrak{w}-m_\mathfrak{v}>0.$$
Thus the specialisation of $\rho.[v,w]$ lies in $\pr(0\oplus W)$ if and only if $\nu(\rho,[v,w])  = m_\mathfrak{v}-m_\mathfrak{w} < 0$, proving the claim and hence one direction of the result.

In the reverse direction, suppose $[v,w]$ is not semistable,  meaning by definition that $$\overline{G.[v,w]} \cap  \pr(0\oplus W) \neq \varnothing,$$ and fix $[\tilde v,\tilde w]\in  \overline{G.[v,w]} \cap  \pr(0\oplus W).$ By \cite[Section 4.2]{hoskins} (which we describe the history of further in Remark \ref{rem:hoskins} below), we may find an arc $\rho \in G$ such that $[v,w]$ specialises to $[\tilde v,\tilde w]$. But by the calculation of the previous paragraph, the condition  $[\tilde v,\tilde w] \in \pr(0\oplus W)$ forces $\nu(\rho,[v,w])< 0$, proving the result.  \end{proof}

\begin{example}\label{PSS-example}  By Example \ref{weight-in-GIT}, the weight of an arc induced by a one-parameter subgroup is simply the usual one involved in GIT. For a single representation by a reductive group, the Hilbert--Mumford criterion proves that GIT semistability of $[w]$ is equivalent to the numerical condition that $\nu(\lambda, [w])\geq 0$ for all one-parameter subgroups $\lambda$ of $G$. So Theorem \ref{thm:HMcriterion} can be thought of as a numerical criterion that involves arcs alongside one-parameter subgroups, but which applies more generally to $G$-actions on pairs of vector spaces, hence to differences of ample line bundles, and which further does not require reductivity. 

By work of Paul--Sun--Zhang \cite{paulsunzhang}, this result is optimal: semistability of pairs (for reductive groups) cannot be characterised by one-parameter subgroups alone. Paul--Sun--Zhang give several counterexamples, with the simplest involving an $\SL(2,\C)$-action on $(\Sym^n\C^2) \oplus (\Sym^n \C^2\oplus \Sym^{n-2}\C^2)$.
\end{example}

\begin{remark}\label{rem:hoskins} The existence of a ``destabilising'' arc in the proof of Theorem \ref{thm:HMcriterion} is central to our work. The existence of such an arc is a key ingredient in Mumford's proof of the Hilbert--Mumford criterion \cite[Section 2.1]{book:git}, and is explained in detail by Hoskins  \cite[Section 6.4]{hoskins}. It is also involved in one proof of the valuative criterion for properness \cite[Section 7.2]{kempf}, as mentioned by Boucksom--Hisamoto--Jonsson \cite[Section 5.1]{bhj:asymptotics}. The proof is essentially by a dimension-counting, genericity argument. 

This construction of an arc is the first step in Mumford's proof of the Hilbert--Mumford criterion. The remaining steps produce a one-parameter subgroup $\lambda$ of $G$ associated to the arc $\rho$ using the Cartan--Iwahori decomposition, and the final step of the Hilbert--Mumford criterion then shows that the specialisation of $[v,w]$ under $\lambda$ is $[\tilde v,\tilde w]$, which is why $\lambda$ destabilises. It is this last step that fails in the examples of Paul--Sun--Zhang described in Example \ref{PSS-example}. \end{remark}

We next turn to a more analytic aspect of the theory of stability of pairs, due to Paul \cite{paul:hyperdiscriminants}. We will later wish to extend these results to more general settings, and so recall them here. Endow $V$ and $W$ with Hermitian inner products, so that for each $g\in G$ we obtain values $|g.v|$ and $|g.w|$ associated to $(v,w)\in V\oplus W$. We consider the \emph{log norm functional} $f: G\to \R$ defined by $$f(g) = \log|g.v| - \log|g.w|.$$

\begin{definition} 
We say that $[v,w]$ is \emph{analytically semistable} if there is a constant $\delta>0$ such that for all $g\in G$ we have $f(g)\geq -\delta$.
\end{definition}

The following is the key analytic result surrounding semistability of pairs.

\begin{theorem}[{\cite[Proposition 2.6]{paul:hyperdiscriminants}}]\label{thm:paullognorm}
A pair $[v,w]$ is semistable if and only if there is a constant $\delta>0$ such that for all $g\in G$ we have $f(g)\geq -\delta$.
\end{theorem}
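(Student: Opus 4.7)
The plan is to prove the equivalence directly from the definitions, exploiting the fact that $f(g) = \log|g.v| - \log|g.w|$ is invariant under rescaling of $(v,w)$ and therefore measures quantitatively the distance of the projective point $g.[v,w]$ from the subspace $\bbp(0\oplus W)$. The governing observation is that $f(g_n)\to -\infty$ along some sequence $g_n\in G$ if and only if $g_n.[v,w]$ accumulates on $\bbp(0\oplus W)$ in the Euclidean topology of $\bbp(V\oplus W)$.

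To prove \emph{analytic semistability implies semistability}, I would proceed by contrapositive: if $\overline{G.[v,w]}\cap \bbp(0\oplus W)$ is nonempty, then using the standard coincidence of Zariski and Euclidean closures for orbits of algebraic group actions on complex varieties, one produces a sequence $g_n\in G$ with $g_n.[v,w]\to [0,w_\infty]$ in the Euclidean topology, for some nonzero $w_\infty\in W$. Choosing unit-norm affine representatives of $g_n.[v,w]$, the convergence forces $|g_n.v|/\sqrt{|g_n.v|^2+|g_n.w|^2}\to 0$ and the complementary ratio to $1$, whence $|g_n.v|/|g_n.w|\to 0$, i.e.\ $f(g_n)\to -\infty$, contradicting boundedness below of $f$.

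For the reverse direction, again by contrapositive: assume $f$ is not bounded below and fix $g_n\in G$ with $f(g_n)\to -\infty$. Consider the normalised representatives $(\tilde v_n,\tilde w_n)=(g_n.v,g_n.w)/\sqrt{|g_n.v|^2+|g_n.w|^2}$; by compactness of the unit sphere in $V\oplus W$, a subsequence converges to some $(v_\infty,w_\infty)$ of unit norm. The hypothesis $|g_n.v|/|g_n.w|\to 0$ forces $v_\infty=0$, so $g_n.[v,w]\to [0,w_\infty]\in \bbp(0\oplus W)$ in projective space. Hence $[0,w_\infty]$ belongs to $\overline{G.[v,w]}\cap \bbp(0\oplus W)$, so $[v,w]$ is not semistable.

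No substantive obstacle arises; the argument is simply the observation that the only way $g.[v,w]$ can approach $\bbp(0\oplus W)$ is for the ``$V$-component'' to shrink relative to the ``$W$-component'', and $f(g)\to -\infty$ records exactly this. As an alternative route one could deduce the theorem from Theorem \ref{thm:HMcriterion} together with the asymptotic estimate $f(\rho(t))=\nu(\rho,[v,w])\log|t|^{-1}+O(1)$ along an arc $\rho$ as $t\to 0$, but the compactness argument above is cleaner and avoids routing through the numerical criterion.
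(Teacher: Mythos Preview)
Your argument is correct. The paper itself does not supply a proof of this theorem: it is stated as Paul's result, and the subsequent remark merely records that Paul establishes it via the explicit identity
\[
\log\tan^2 \dist_{\FS}\bigl(G.[v,w],\,G.[v,0]\bigr)=\inf_{g\in G}\bigl(\log|g.v|-\log|g.w|\bigr),
\]
and that Boucksom--Hisamoto--Jonsson give another proof. Your compactness argument bypasses Paul's formula entirely and proceeds directly from the observation that $f(g)\to -\infty$ is equivalent to $g.[v,w]$ approaching $\bbp(0\oplus W)$ in the Euclidean topology, together with the coincidence of Euclidean and Zariski closures for constructible sets over $\bbc$. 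This is closer in spirit to the BHJ argument the paper alludes to, and is more elementary than Paul's route: you lose the precise quantitative identity but gain a self-contained two-paragraph proof that needs nothing beyond basic topology. Either approach is adequate for the paper's purposes, since only the qualitative equivalence is used downstream.
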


\begin{remark}Paul proves this by establishing  the formula $$\log \tan^2 \dist_{\FS}(G.[v,w],G.[v,0]) = \inf_{g\in G}\{\log|g.v| - \log|g.w|\},$$ where the distance $\dist_{\FS}(G.[v,w],G.[v,0])$ is measured through the Fubini--Study metric on $\pr(V\oplus W)$ induced by the inner product on $V\oplus W$ and is calculated as the infimum of the distance of any two points in $G.[v,w]$ and $G.[v,0]$. Boucksom--Hisamoto--Jonsson \cite[Lemma 5.5]{bhj:asymptotics} have given another proof.\end{remark}

We next consider the behaviour of the log norm functional $f$ along an arc. By definition, $\rho:\Spec \field \to G$ induces a morphism $\Spec \field \to V$ through the action on $v$, and hence a morphism $\Spec\ring\to V$; likewise for $W$.  As we will be interested in asymptotics in the analytic topology, it will be more transparent to canonically extend this morphism to a morphism $\Spec\C[z] \to V$, following the process of Example \ref{ex:different-models}, as we may then endow the affine line $\C$ with the analytic topology, with coordinate $z$. 

\begin{lemma}\label{lem:slope} We have 

$$f(\rho(z))=\nu(\rho,[v,w])\log|z|\mi + O(1).$$

\end{lemma}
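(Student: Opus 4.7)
The plan is a direct asymptotic analysis in coordinates. First, I fix orthonormal bases $(x_i)_{i=0}^p$ of $V$ and $(y_j)_{j=0}^q$ of $W$, so that for the extension of $\rho$ to a morphism from a punctured neighbourhood of $0 \in \C$ into $G$ we may write, as in Equation~\eqref{eq:weight},
\[
\rho(z).v = \sum_{i=0}^p a_i(z)\, x_i, \qquad \rho(z).w = \sum_{j=0}^q b_j(z)\, y_j,
\]
where each $a_i(z), b_j(z)$ is a convergent Laurent series near $0$ (the Laurent expansions from the $\field$-point description). The norms become
\[
|\rho(z).v|^2 = \sum_i |a_i(z)|^2, \qquad |\rho(z).w|^2 = \sum_j |b_j(z)|^2.
\]

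Next, I control the leading asymptotics. Set $m_v := \min_{a_i \neq 0} \ord_0(a_i)$ and $m_w := \min_{b_j \neq 0} \ord_0(b_j)$, so that $\nu(\rho,[v,w]) = m_w - m_v$ by definition. The $V$-valued function $z \mapsto z^{-m_v}\rho(z).v$ has coordinates $z^{-m_v} a_i(z)$, each of which is holomorphic at $0$ (having non-negative order), and at least one coordinate is nonzero at $z=0$. Hence $|z^{-m_v}\rho(z).v|$ is a continuous function on a punctured neighbourhood of $0$ extending continuously to a strictly positive value at $z=0$. Thus on a punctured disc
\[
\log|\rho(z).v| = m_v \log|z| + \log|z^{-m_v}\rho(z).v| = -m_v \log|z|\mi + O(1),
\]
and identically
\[
\log|\rho(z).w| = -m_w \log|z|\mi + O(1).
\]

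Subtracting these two expansions gives
\[
f(\rho(z)) = \log|\rho(z).v| - \log|\rho(z).w| = (m_w - m_v)\log|z|\mi + O(1) = \nu(\rho,[v,w])\log|z|\mi + O(1),
\]
which is the claimed formula. One should remark that changing orthonormal basis multiplies each coordinate vector by a unitary matrix, which only affects the bounded $O(1)$ term, so the result is independent of choices (and in particular independent of the choice of representative of $[v,w]$, consistent with the fact that Definition~\ref{def:weight} of $\nu(\rho,[v,w])$ is already projectively invariant).

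There is no real obstacle here: the content is purely the observation that the Hermitian norm of a Laurent-type $V$-valued function is asymptotically governed by the minimal order of vanishing of its coordinates, with the sum-of-squares preventing cancellation. The only minor care needed is to verify that the $O(1)$ remainder is genuinely bounded as $z \to 0$, which follows because $z^{-m_v}\rho(z).v$ extends continuously to a nonzero vector at $z=0$.
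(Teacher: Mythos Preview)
Your proof is correct and follows essentially the same approach as the paper's own proof: a direct coordinate computation showing that $\log|\rho(z).v|$ and $\log|\rho(z).w|$ each behave like $(-\min_i\ord_0)\log|z|^{-1}+O(1)$, then subtracting. Your justification of the $O(1)$ remainder via the continuous nonzero extension of $z^{-m_v}\rho(z).v$ is in fact slightly more explicit than the paper's version.
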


\begin{proof}

As before we write explicitly $\rho.[v,w]_\field=[a_0:\dots:a_p:b_0:\dots:b_q]$ with the $a_i,b_i\in \field$ in chosen coordinates as before.  Considering the $V$-coordinates to begin with, we see 
\begin{align*}
2\log|\rho(z). v| &= \log \left(\sum_{i=0}^p\left|  a_{0,i}z^{\ord_0(a_i)}+O(z^{\ord_0(a_i)+1})\right|^2\right)\\
&= \max_{i,a_i\neq 0}(\log |z|^{2\ord_0(a_i)})+O(1)\\
&=\max_{i,a_i\neq 0}(-2\ord_0(a_i))\log|z|\mi+O(1)\\
&=-\min_{i,a_i\neq 0}(2\ord_0(a_i))\log|z|\mi+O(1) \\
&=2m_\mathfrak{v}\log|z|\mi+O(1).
\end{align*}
Likewise we obtain
$$2\log|\rho(z). w|=2m_\mathfrak{w}\log|z|\mi+O(1)$$
and the result follows as $\nu(\rho,[v,w])=m_\mathfrak{v}-m_\mathfrak{w}$.        \end{proof}
         
         We view $\nu(\rho,[v,w])$ as the slope of the log norm functional $f$ along the arc $\rho$.

\begin{example} In the special case that $\rho$ is induced by a one-parameter subgroup, our result recovers Boucksom--Hisamoto--Jonsson \cite[Theorem 5.4(i)]{bhj:asymptotics}.\end{example}

The following summarises the results of this section.

\begin{corollary}\label{cor:body-HNKM} For a pair $[v,w]$, the following are equivalent:
\begin{enumerate}[(i)]
\item $[v,w]$ is semistable;
\item $[v,w]$ is numerically semistable;
\item $[v,w]$ is analytically semistable.
\end{enumerate}
\end{corollary}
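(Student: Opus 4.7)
The statement is essentially a packaging of the two equivalences established earlier in the section, so the plan is short. First I would observe that the equivalence $(i)\Leftrightarrow(ii)$ is exactly the content of Theorem \ref{thm:HMcriterion}, while the equivalence $(i)\Leftrightarrow(iii)$ is exactly Paul's Theorem \ref{thm:paullognorm}. Combining the two immediately yields the three-way equivalence, and no new argument is strictly necessary.

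That said, for expository completeness it is natural to highlight the direct implication $(iii)\Rightarrow(ii)$, which is the content of Lemma \ref{lem:slope}. Namely, suppose $[v,w]$ fails to be numerically semistable, so that there is an arc $\rho$ with $\nu(\rho,[v,w])<0$. Evaluating the log norm functional along the one-variable analytic extension of $\rho$, Lemma \ref{lem:slope} gives
\begin{equation*}
f(\rho(z)) = \nu(\rho,[v,w])\log|z|\mi + O(1),
\end{equation*}
so $f(\rho(z))\to -\infty$ as $z\to 0$, contradicting analytic semistability. Hence $(iii)\Rightarrow(ii)$ is direct.

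The reverse direction $(ii)\Rightarrow(iii)$ is where the real content of the section is used: a pair may be numerically semistable, yet to produce a uniform lower bound $f\geq -\delta$ on all of $G$ one needs either Paul's analytic machinery (Theorem \ref{thm:paullognorm}) to pass through semistability, or the destabilising-arc construction (recalled in Remark \ref{rem:hoskins}) to go the other way. I would not attempt to give an independent analytic proof here; the cleanest route is to cite Theorems \ref{thm:HMcriterion} and \ref{thm:paullognorm} and remark that Lemma \ref{lem:slope} provides a transparent direct passage from $(iii)$ to $(ii)$. No step is a serious obstacle, since all the work is already behind us; the only subtlety worth flagging is that the implication $(ii)\Rightarrow(i)$ from Theorem \ref{thm:HMcriterion} relied crucially on the existence of a destabilising arc à la \cite[Section 4.2]{hoskins}, which is the deep ingredient ultimately underlying the whole corollary.
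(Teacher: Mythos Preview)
Your proposal is correct and matches the paper's approach exactly: the paper gives no formal proof of this corollary at all, introducing it with ``The following summarises the results of this section'' and relying on Theorem~\ref{thm:HMcriterion} for $(i)\Leftrightarrow(ii)$ and Theorem~\ref{thm:paullognorm} for $(i)\Leftrightarrow(iii)$, just as you do. Your additional remark that Lemma~\ref{lem:slope} gives a direct passage $(iii)\Rightarrow(ii)$ is a nice expository touch that the paper also alludes to immediately after the corollary (``By our slope formula, these are further equivalent to the log norm functional $f$ having nonnegative slope at infinity along all arcs'').
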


By our slope formula, these are further equivalent to the log norm functional $f$ having nonnegative slope at infinity along all arcs. 

\subsection{Stability of pairs} The results we have proven for semistability directly apply further to Paul's notion of a \emph{stable pair}, for which we need another $G$-representation. As the group $G$ is a linear algebraic group,  we may embed it into $\GL(m)$ for some $m$, giving a natural $G$-action on $\C^{m\times m}$. Denote by $e\in\GL(m) \subset \C^{m\times m}$ the identity matrix, and let $\deg V$ denote the degree of the $G$-representation $V$ in the sense of \cite[(2.6)]{paul:hyperdiscriminants}, that is: the smallest positive integer $d$ such that the weight polytope of all nonzero $v\in V$ lie in the rescaled simplex $d\Delta_{\dim V}$.

\begin{definition}
We say that $[v,w]$ is \emph{stable} if there exists an integer $k>0$ such that the pair $$[e^{\otimes \deg V}\otimes v^{\otimes k}, w^{\otimes k+1}] \in \pr(((\C^{m\times m})^{\otimes \deg V} \otimes V^{\otimes k}) \oplus W^{\otimes k+1})$$ is semistable.
\end{definition}

The definition is motivated by connections to K\"ahler geometry, as we shall see in Section \ref{sec:applications}. 

We next explain the  analogue of the numerical criterion.  Defining the \emph{norm} of $\rho$ to be
\begin{equation}
\|(\rho,[v])\|:=\nu(\rho,[v,e^{\otimes \deg V}]),
\end{equation}
a simple computation using Lemma \ref{lem:rules}  shows the weight of the point $[e^{\otimes \deg V}\otimes v^{\otimes k}, w^{\otimes k+1}] $ under $\rho$ equals $(k+1)(\nu(\rho,[v,w])) - \|(\rho,[v,w])\|.$ By Tian \cite[Lemma 5.1]{tian:survey} (who uses the definition of $\deg V$), it follows that the norm of an arbitrary arc is nonnegative. 

\begin{definition} We say that $[v,w]$ is \emph{numerically stable} if there exists an integer $k>0$ such that for all arcs $\rho$ we have $\nu(\rho,[v,w]) \geq (k+1)^{-1} \|(\rho,[v,w])\|.$
\end{definition}

It then automatically follows from Theorem \ref{thm:HMcriterion} that numerical stability is equivalent to stability. It will also be useful in our subsequent work on polystability to point out that this is further equivalent to  stability of the pair $[e^{\otimes \deg V}\otimes v^{\otimes k+1},w^{\otimes k+1}\otimes v]$ (as in the calculation above, using Lemma \ref{lem:rules}(2), simply tensoring by $v$).

We similarly obtain a log norm interpretation from Theorem \ref{thm:paullognorm}, due to Paul. 

\begin{definition}
We say that $[v,w]$ is \emph{analytically stable} if there exists an integer $k>0$ and $\delta>0$ such that  for all $g\in G$ we have $$ \log|g.v| - \log|g.w| \geq (k+1)^{-1} ((\deg V)\log|g| - \log|g.v|) -\delta.$$
\end{definition}

 Fixing Hermitian inner products on $V, W$ and $\C^{m\times m}$ we obtain an induced inner product on the vector space $(\C^{m\times m})^{\otimes \deg V} \otimes V^{\otimes k} \oplus W^{\otimes k+1}$, where we recall that the norm of a tensor product is the product of the norms. Summarising, we obtain:

\begin{corollary}\label{coro:stability} The following are equivalent:
\begin{enumerate}[(i)]
\item the pair $[v,w]$ is stable;
\item the pair $[v,w]$ is numerically stable;
\item the pair $[v,w]$ is analytically stable.\end{enumerate}
\end{corollary}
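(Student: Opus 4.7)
The plan is to deduce Corollary \ref{coro:stability} directly by invoking Corollary \ref{cor:body-HNKM} on the auxiliary pair
\[
[v',w'] := [e^{\otimes \deg V}\otimes v^{\otimes k}, w^{\otimes k+1}] \in \pr\bigl((\C^{m\times m})^{\otimes \deg V} \otimes V^{\otimes k}\oplus W^{\otimes k+1}\bigr),
\]
together with the algebraic rules of Lemma \ref{lem:rules} and the multiplicativity of Hermitian norms under tensor products. By definition, stability of $[v,w]$ is precisely semistability of $[v',w']$ for some $k>0$, so Corollary \ref{cor:body-HNKM} applied to $[v',w']$ will instantly yield the three equivalences: stable, numerically semistable for $[v',w']$, and analytically semistable for $[v',w']$. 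The remaining task is then purely one of translation: I must rewrite the latter two conditions on $[v',w']$ in terms of the quantities attached to $[v,w]$ that appear in the definitions of numerical and analytic stability.

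For the numerical side, the excerpt has already noted that a short application of Lemma \ref{lem:rules} yields the identity
\[
\nu(\rho,[v',w']) = (k+1)\,\nu(\rho,[v,w]) - \|(\rho,[v,w])\|.
\]
Hence the inequality $\nu(\rho,[v',w']) \geq 0$ for every arc $\rho$ rearranges immediately into $\nu(\rho,[v,w]) \geq (k+1)^{-1}\|(\rho,[v,w])\|$ for every arc, which is exactly numerical stability of $[v,w]$.

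For the analytic side, I would combine multiplicativity of Hermitian norms under tensor products with the identity $|g.e|=|g|$ (valid because $e\in\C^{m\times m}$ is the identity matrix and $G\subset\GL(m)$ acts by matrix multiplication) to obtain
\[
\log|g.v'| - \log|g.w'| = (\deg V)\log|g| + k\log|g.v| - (k+1)\log|g.w|.
\]
Dividing the analytic semistability bound $\log|g.v'|-\log|g.w'|\geq-\delta$ by $k+1$ and regrouping the terms involving $\log|g.v|$ will then produce precisely the analytic stability inequality for $[v,w]$ stated in the excerpt, with $\delta$ rescaled by a factor of $(k+1)^{-1}$.

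The argument is essentially a mechanical translation once Corollary \ref{cor:body-HNKM} is granted, so I anticipate no genuine obstacle. The point deserving care is the sign and coefficient bookkeeping in the algebraic rearrangement on the analytic side, along with verifying that the Hermitian inner product used on $(\C^{m\times m})^{\otimes \deg V}\otimes V^{\otimes k}\oplus W^{\otimes k+1}$ is the induced one (so that the multiplicativity computation is valid). Both are straightforward once written out, but they are the only steps where something could conceivably go wrong.
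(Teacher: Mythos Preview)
Your proposal is correct and follows essentially the same approach as the paper: both reduce immediately to Corollary \ref{cor:body-HNKM} (equivalently Theorems \ref{thm:HMcriterion} and \ref{thm:paullognorm}) applied to the auxiliary pair $[e^{\otimes \deg V}\otimes v^{\otimes k}, w^{\otimes k+1}]$, and then translate the numerical and analytic semistability conditions back to $[v,w]$ via Lemma \ref{lem:rules} and multiplicativity of norms under tensor products. The paper in fact carries out this translation in the paragraphs preceding the corollary, so no separate proof is given.
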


\subsection{Polystability of pairs} We incorporate automorphisms of $[v,w]$ and introduce a notion of a polystable pair, a numerically polystable pair and an analytically polystable pair, with the main result being that all three conditions are equivalent.

\begin{definition}
The \emph{automorphism group} of $[v,w]$ is the intersection of stabilisers $$\Aut([v,w]) = G_{[v]}\cap G_{[w]}.$$
\end{definition}

Write $$G_{[v]}\cap G_{[w]} = H^{\C} \ltimes U,$$ with $H^{\C}$ a maximal connected reductive subgroup and $U$ unipotent, and fix a maximal complex torus $T^{\C}$ lying inside the identity component of $H^{\C}$. The group $H^{\C}$ acts on $V\oplus W$, and we consider the fixed locus $(V\oplus W)^{H^{\C}} = V^{H^{\C}} \oplus W^{H^{\C}}$, which is a linear subspace as $\Aut([v,w])$ acts linearly. Note that $(v,w) \in (V\oplus W)^{H}$. We fix the groups $H, T^{\C}$ throughout the current discussion. Denote further $G^H$ the centraliser of $H^{\C}$ in $G$, so that there is an action of $G^H$ on $(V\oplus W)^{H^{\C}}$, and so that $T^{\C} \subset G^H$.

We begin by defining polystability of a pair. Similarly to the stable case, for $k>0$ we will be interested in the geometry of the projective space $$\pr^N = \pr\left(((\C^{m\times m})^{\otimes \deg V} \otimes V^{\otimes k+1})^{H^{\C}} \oplus (W^{\otimes k+1} \otimes V)^{H^{\C}}\right)$$ and the point $[e^{\otimes \deg V}\otimes v^{\otimes k+1}, w^{\otimes k+1}\otimes v]$ lying in it, associated to a pair $[v,w]$. The group $G^H$ acts in the natural way on this space, and we further let $T^{\C}$ act nontrivially only on the $(\C^{m\times m})^{\otimes \deg V}$ component and the $V$-component of the second factor in the direct sum---namely $W^{\otimes k+1} \otimes V$---giving an action of $G^H\oplus T^{\C}$ on $\pr^N$. 

For $u\in \pr^N$, let $C_u = \overline{T^{\C}.u},$ which is a subvariety of $\pr^N$. Then for $g\in G^H$, since $g$ commutes with $T^{\C}$ we have $$g\left(T^{\C}.u\right) = T^{\C}.g(u)$$ and hence the same is true of their closures: $C_{g(u)} =g(C_u).$ The $G^H$-action induces an action on the Hilbert scheme $\Hilb$ parameterising subschemes of $\pr^N$ with the same Hilbert polynomial of $C_p$; these polynomials are all equal since $C_{g(u)} =g(C_u)$.

Let $\Hilb_{\des}\subset \Hilb$ denote the subscheme of $\Hilb$ corresponding to subschemes entirely contained in $\pr(0\oplus (W^{\otimes k+1}\otimes V)^{H^{\C}})$; this is the ``destabilising'' part of the Hilbert scheme.

\begin{definition}\label{def:polystable} We say that $[v,w]$ is \emph{polystable} if there exists a $k>0$ such that, setting $u = [e^{\otimes \deg V}\otimes v^{\otimes k+1}, w^{\otimes k+1}\otimes v]$,  we have 

 $$\overline{G^H.[C_u]}\cap \Hilb_{\des} = \varnothing.$$
\end{definition}

\begin{remark}
If a point $u$ specialises to a point $u'$ through an arc  $\rho$ in $G^H$, then the dimension of $C_u$ may be strictly smaller than the dimension of $C_{u'}$, as the stabiliser may increase in the limit. So one should not expect $C_{u'}$ to be the limit of the $C_u$ in $\Hilb$ in general, as $[C_{u'}]$ may even lie in a different Hilbert scheme. This phenomenon is related to what Donaldson calls ``splitting of orbits'' \cite[p.\ 115]{donaldson:algebraic}.
\end{remark}

We wish to compare this to a condition involving log norm functionals, and a condition involving arcs. We begin by giving the various definitions. Numerical polystability involves twisting an arc in $G^H$ by a one-parameter subgroup of $T^{\C}$, which is defined in exactly the same way  as was done for models in Section \ref{sect:normsaut} for models.

\begin{definition}We define the (reduced) \emph{norm} of an arc $\rho\in G^H(\field)$ to be 
$$\|(\rho,[v])\|_{T^{\C}} = \inf_{\lambda \in N_{\Z}} \|(\rho_{\lambda},[v])\|,$$ where $\rho_{\lambda}$ is the twist of $\rho$ by an arbitrary one-parameter subgroup $\lambda$ of $T^{\C}$, and say that $[v,w]$ is \emph{numerically polystable} if there exists a $k>0$ such that $$\nu(\rho, [v,w]) \geq (k+1)\mi \|(\rho,[v])\|_{T^{\C}}.$$\end{definition}

Turning to the analytic counterpart, as all results in the current section rely on the following property, we state it explicitly:

\begin{definition}\label{def:proper} We say that the \textit{norm functional} is \emph{proper} if for all $g\in G^H$ the function $T^{\C}\to \R$ defined by $$t\mapsto (\deg V)\log|t.g.e| - \log|t.g.w|$$ is proper. \end{definition}

In applications, we will show that this property holds. The analytic version of polystability is then as follows. 

\begin{definition}\label{def:anal-poly}
We say that a pair $[v,w]$ is \emph{analytically polystable} if there exist $\epsilon, \delta>0$ such that for all $g\in G^H$ $$\log|g.v| - \log|g.w| \geq \epsilon \inf_{t\in T^{\C}}((\deg V)\log|t.g.e| - \log|t.g.v|) - \delta.$$ 
\end{definition}

We next allow twists of arcs by elements of $N_{\R} = N_{\Z} \otimes \R$, in essence allowing twists by irrational elements of $\Lie T^{\C}$. To make sense of this, first note that given $\xi \in N_{\R}$ we may consider for $s\in \C$ the exponential $\exp(s\xi)\in T^{\C}$, and so we may consider $\rho(\exp(s))\circ \exp(s\xi)\in G^H$. In particular, just as for arcs we may define $\nu(\rho_{\xi},[v,w])$ by considering the behaviour of the order of pole of $\rho_{\xi}.(v,w)_\field \in (V\oplus W)(\field)$.

\begin{lemma}\label{lem:continuity} Fix an arc $\rho \in G^H(\field)$, and suppose the norm functional is proper. Then:
\begin{enumerate}[(i)] 
\item the function $N_{\R} \to \R$ given by $\xi \to \nu(\rho_{\xi},[v,w])$ is continuous;
\item the expansion $$f(\rho(\exp(-s))\circ \exp(s\xi))=|s|\nu(\rho_{\xi},[v,w]) + O(1)$$ holds as $|s|\to \infty$.
\end{enumerate}
\end{lemma}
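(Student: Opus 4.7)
The plan is to reduce both parts to a concrete formula in terms of the $T^{\C}$-weight decomposition of $V$ and $W$. Since $T^{\C}\subset H^{\C}$ and $G^H$ is the centraliser of $H^{\C}$, the action of $\rho(z)\in G^H$ commutes with $T^{\C}$ and hence preserves the weight-space decompositions $V=\bigoplus_\alpha V_\alpha$ and $W=\bigoplus_\beta W_\beta$, where $\alpha,\beta$ range over characters of $T^{\C}$. Writing $v=\sum_\alpha v_\alpha$ and $w=\sum_\beta w_\beta$, each $\rho(z).v_\alpha$ is a Laurent series valued in $V_\alpha$; set $d_\alpha:=\ord_0(\rho.v_\alpha)$ (with $d_\alpha=+\infty$ if $v_\alpha=0$) and analogously $e_\beta$ for $w$.

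For part (i), I would first verify by direct application of Definition \ref{def:weight} to the twisted arc $\rho_\lambda$ for integer $\lambda\in N_{\Z}$ the formula
\[
\nu(\rho_\lambda,[v,w]) = \max_\alpha\{\langle\alpha,\lambda\rangle-d_\alpha\}-\max_\beta\{\langle\beta,\lambda\rangle-e_\beta\},
\]
up to the twist-sign convention of Section \ref{sect:normsaut}: twisting by $\lambda$ multiplies the weight-$\alpha$ component of $\rho.v$ by $z^{-\langle\alpha,\lambda\rangle}$, shifting its pole order accordingly, and by Lemma \ref{weight-as-intersection} one then reads off $\nu$ as $m_{\mathfrak{v}}-m_{\mathfrak{w}}$ via the formula above. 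The right-hand side is a difference of piecewise-affine convex functions of $\xi$, hence extends uniquely to a continuous piecewise-linear function on $N_{\R}$; this is the extension of $\nu(\rho_\xi,[v,w])$ to arbitrary real $\xi$ used in the lemma, and continuity is immediate from its form.

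For part (ii), I would compute the asymptotics directly. Choose an $H$-invariant Hermitian inner product on $V$ (by averaging, using compactness of $H$); this only changes the originally chosen inner product by bounded factors, affecting only $O(1)$ in log-norms. Under such a product the weight spaces are pairwise orthogonal, so
\[
|\rho(\exp(-s))\exp(s\xi).v|^2=\sum_{\alpha:\,v_\alpha\neq 0}e^{2s\langle\alpha,\xi\rangle}\,|\rho(e^{-s}).v_\alpha|^2.
\]
Writing $\rho(z).v_\alpha=c_\alpha z^{d_\alpha}+O(z^{d_\alpha+1})$ with $0\neq c_\alpha\in V_\alpha$, the $\alpha$-summand equals $e^{2s(\langle\alpha,\xi\rangle-d_\alpha)}(|c_\alpha|^2+O(e^{-s}))$ as $s\to+\infty$. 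Since only finitely many summands are nonzero, their logarithm coincides up to $O(1)$ with the logarithm of the largest summand, yielding
\[
\log|\rho(\exp(-s))\exp(s\xi).v|=s\max_\alpha(\langle\alpha,\xi\rangle-d_\alpha)+O(1).
\]
Subtracting the analogous formula for $w$ produces $f(\rho(\exp(-s))\exp(s\xi))=s\,\nu(\rho_\xi,[v,w])+O(1)$, as required.

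The main conceptual step is recognising that the $T^{\C}$-weight decomposition converts both the algebraic definition of $\nu$ on $N_{\Z}$ and the analytic slope of $f$ along the one-parameter family $s\mapsto\rho(\exp(-s))\exp(s\xi)$ into the same finite combinatorial object, namely the upper envelope of the affine functions $\xi\mapsto\langle\alpha,\xi\rangle-d_\alpha$ (and similarly in $\beta,e_\beta$). Once this is identified, both parts are essentially bookkeeping. The properness of the norm functional is not needed for the present lemma; its role is in the downstream argument, where together with continuity from (i) and the asymptotic slope from (ii) it will allow infima of $\nu(\rho_\xi,[v,w])$ over $\xi\in N_{\R}$ to be realised at finite points of $N_{\R}$.
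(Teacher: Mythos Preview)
Your proof is correct and follows the approach the paper intends: the paper does not give a detailed argument, stating only that ``the proofs are direct calculations, much as Lemma~\ref{lem:slope}'', and your computation via the $T^{\C}$-weight decomposition is precisely the direct calculation in question. Your observation that properness of the norm functional is not actually used in either part is also accurate; that hypothesis only enters downstream in Proposition~\ref{prop:slopeequivfindim}.
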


The proofs are direct calculations, much as Lemma \ref{lem:slope}. We next prove the following, which is in essence an extension of work of Li from test configurations to arcs \cite[Theorem 3.14]{chili:guniform}. The statement applies to arbitrary pairs; since we are interested in applying it to the norm, we will denote the pair by $[e,v] \in \pr(E\oplus V)$ for vector spaces $E,V$,  since this is essentially what appears in applications (when the weight becomes, by definition, the norm).

\begin{proposition}\label{prop:slopeequivfindim}Assume that the norm functional is proper. Then a $\xi\in N_{\R}$ infimising $\nu_{T^\bbc}(\rho,[e,v])$ exists and we have
$$\inf_{t\in T^{\C}}(\log|t.\rho(z).e| - \log|t.\rho(z).v|)=\nu_{T^\bbc}(\rho,[e,v])\log|z|\mi + O(1).$$
\end{proposition}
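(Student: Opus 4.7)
Substituting $z = e^{-s}$ with $s \to +\infty$, and using the invariance of $\log|\cdot|$ under the compact form of $T^\C$ together with the commutativity of $T^\C$ with $G^H$, the infimum on the left-hand side rewrites as
\[ \Phi(s) = \inf_{\xi \in N_\R} f\bigl(\rho(e^{-s})\exp(s\xi)\bigr), \]
where $f$ is the log norm functional $g \mapsto \log|g.e| - \log|g.v|$. Lemma \ref{lem:continuity}(ii) provides the pointwise asymptotic $f(\rho(e^{-s})\exp(s\xi)) = s\,\nu(\rho_\xi,[e,v]) + O_\xi(1)$ for each fixed $\xi$, so the heart of the proof is to commute the infimum with this $O_\xi(1)$ error, and to establish the existence of an infimising $\xi_*$ for the continuous (by Lemma \ref{lem:continuity}(i)) function $\xi \mapsto \nu(\rho_\xi,[e,v])$.

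The existence of $\xi_*$ reduces to showing this function is proper on $N_\R$. I would deduce this from the properness hypothesis by evaluating the pointwise slope formula at $s=1$: this expresses $\nu(\rho_\xi,[e,v])$ as the log norm $f(\rho(e^{-1})\exp(\xi))$ minus an error that, on tracking the proof of Lemma \ref{lem:continuity}(ii), grows at most linearly in $\|\xi\|$. Since the log norm at $g = \rho(e^{-1})$ is proper in $t = \exp(\xi) \in T^\C$ by hypothesis and grows superlinearly in $\|\xi\|$, we conclude that $\nu(\rho_\xi,[e,v]) \to +\infty$ as $\|\xi\| \to \infty$, so the infimum is attained at some $\xi_* \in N_\R$. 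Plugging $\xi_*$ into the pointwise slope formula immediately yields the upper bound $\Phi(s) \leq s\,\nu_{T^\C}(\rho,[e,v]) + O(1)$.

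The main obstacle is the matching lower bound, since the error $O_\xi(1)$ is not a priori uniform in $\xi$. My strategy is to argue that the minimisers $\xi(s)$ of $\xi \mapsto f(\rho(e^{-s})\exp(s\xi))$, which exist for each sufficiently large $s$ by a repeat of the properness argument, lie in a compact subset of $N_\R$ independent of $s$. Granted this, a uniform version of Lemma \ref{lem:continuity}(ii) on compact sets of $\xi$, obtained by tracking the dependence of the constants in its proof on $\xi$, gives $\Phi(s) = s\,\nu(\rho_{\xi(s)},[e,v]) + O(1) \geq s\,\nu_{T^\C}(\rho,[e,v]) + O(1)$, concluding the proof. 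Boundedness of the $\xi(s)$ I would establish by contradiction: if $\|\xi(s_k)\| \to \infty$ along a subsequence, then properness of $\xi \mapsto \nu(\rho_\xi,[e,v])$ forces $s_k(\nu(\rho_{\xi(s_k)},[e,v]) - \nu_{T^\C}(\rho,[e,v])) \to +\infty$, and this must be compensated by an equally large negative contribution from the error term; a direct analysis of the coordinate expansion underlying Lemma \ref{lem:continuity}(ii)---the error being a sum of finitely many monomials with coefficients depending continuously on finitely many entries of $\rho$---should show that this error cannot outpace the linear-in-$s$ growth of $\nu$, yielding the contradiction. This last step, bounding the error term uniformly against the growth of $\nu$, is where I expect the technical difficulty to concentrate.
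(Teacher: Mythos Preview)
Your overall plan mirrors the paper's: establish that the per-$s$ minimisers $\xi(s)$ stay bounded, then read off the $O(1)$ slope formula. The gap is in your argument for the properness of $\xi \mapsto \nu(\rho_\xi,[e,v])$. You claim that the log norm functional $\xi \mapsto f(\rho(e^{-1})\exp(\xi))$ grows \emph{superlinearly} in $\|\xi\|$, but this is false. In a $T^{\C}$-weight basis one has $\log|\exp(\xi).u|^2 = \log\sum_i e^{2\langle\lambda_i,\xi\rangle}|u_i|^2$, a log-sum-exp, hence within a bounded constant of the piecewise-linear function $2\max_i \langle\lambda_i,\xi\rangle$; the log norm $f$ is a difference of two such functions and therefore grows at most linearly. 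Properness of $f$ only guarantees a strictly positive asymptotic linear slope in every direction, not superlinear growth. Since you only bound the error term linearly in $\|\xi\|$ as well, the subtraction does not yield properness of $\nu(\rho_\xi)$ as written.

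The repair actually simplifies your plan. The same weight-basis computation underlying Lemma~\ref{lem:continuity}(ii) and the paper's Lipschitz estimate shows that the error is not merely at most linear but \emph{uniformly} bounded: $f(\rho(e^{-s})\exp(s\xi)) = s\,\nu(\rho_\xi,[e,v]) + O(1)$ with the $O(1)$ depending only on $\rho$ and the representations, not on $\xi$. The point is that $\log\sum_i e^{x_i}$ lies between $\max_i x_i$ and $\max_i x_i + \log(\dim)$, and shifting the $x_i$ by the bounded constants $\log|c_i|^2$ (leading coefficients of $\rho$) moves the max by at most $\max_i|\log|c_i|^2|$. From this uniform estimate, properness of $\nu(\rho_\xi)$ follows at once from properness of $g_s$ at any fixed $s$, the infimiser $\xi_*$ exists by continuity, and both bounds on $\Phi(s)$ drop out in one line---your contradiction argument for boundedness of $\xi(s)$ becomes unnecessary. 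The paper organises the same ingredients differently: it first bounds the minimisers $\xi_z$ by comparing against the value at $\xi=0$ and invoking properness, extracts a subsequential limit $\xi_\infty$ which then serves as the infimiser, and passes between $\xi_{z_j}$ and $\xi_\infty$ via the Lipschitz bound $|g_s(\xi)-g_s(\xi')| \leq Cs|\xi-\xi'|$ rather than the sharper uniform $O(1)$ estimate.
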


\begin{proof}
We begin by showing that there is an element of $N_{\R}$ that computes the infimum asymptotically. By properness of the norm, for each $z$  there exists a $t_z\in T^\bbc$ such that
$$f(t_z)=\inf_{t\in T^{\C}}(\log|t.\rho(z).e| - \log|t.\rho(z).v|)=:f_{T^\bbc}(z).$$
Thus there exists for each $z$ a $\xi_z\in N_{\Z}$ with $t_z=\xi_z(z)$ (namely, we view $\xi$ as giving a map $\C^*\to G$ and we evaluate this map at $z\in \C^*$). Throughout we will use the notation $\xi(z)$, for $z\in \C^*$, to mean $\xi(e^{-\log|z|\mi})$ in the usual sense. 

We claim that the $\xi_z$ are uniformly bounded in $N_{\R}$. Indeed, along the arc $\rho$ there exists $C>0$ such that
$$\log|\rho(z).e| - \log|\rho(z).v|\leq C\log|z|\mi$$
hence
$$f_{T^\bbc}(z)=f(t_z)=f(\xi_z(z))\leq \log|\rho(z).e| - \log|\rho(z).v|\leq C\log|z|\mi.$$
Now we can reparametrise $\xi_z(z)=\log|z|\mi\xi_{z}(1)$. Hence
$$f(\log|z|\mi\xi_{z}(1))=\log|z|\mi f(\xi_z(1))\leq C\log|z|\mi.$$
Because $\xi\mapsto f(\xi(1))$ is proper in $\xi$, this implies that the $\xi_z$ are uniformly bounded, hence subsequentially converge to some $\xi_{z_j}\to\xi_\infty$ with $|z_j|\to 0$. 

We now claim that this limit can be used to compute the slope, i.e.\ that
\begin{equation}\label{eq:slope1}
\frac{|f(\xi_{z_j}(z_j))-f(\xi_\infty(z_j))|}{\log|z_k|\mi}\to 0.
\end{equation}
To that end, we first pick bases $(e_i)$, $(v_i)$ of $E$ and $V$ adapted to the action of $T^\bbc$, i.e.\ so that $E$ and $V$ admit weight decompositions adapted to such bases, with weights $\lambda_i^E$, $\lambda_i^V$. In this decomposition we write $\rho(z).e_i=\sum_j e_{ij}(z) e_j$ and $\rho(z).v_i=\sum_l v_{il}(z)v_l$, with $e_{ij},v_{il}\in \field=\bbc\llparenthesis z\rrparenthesis$. 

Assume that $E$ and $V$ admit the $e_i$, $v_i$ as an orthonormal basis with respect to the chosen inner products.  Then for any $\xi,\xi'\in N_\bbr$ we obtain

\begin{align*}|f(\xi(z))-f(\xi'(z))|=2\left|\log\frac{\sum_i |z|^{2\langle\lambda^E_i,\xi\rangle}\left(\sum_j |e_{ij}(z)|^2\right)}{\sum_i |z|^{2\langle\lambda^E_i,\xi'\rangle}\left(\sum_i |e_{ij}(z)|^2\right)}-\log\frac{\sum_i |z|^{2\langle\lambda^V_i,\xi\rangle}\left(\sum_l |v_{il}(z)|^2\right)}{\sum_j |z|^{2\langle\lambda^V_i,\xi'\rangle}\left(\sum_l |v_{il}(z)|^2\right)}\right|
\end{align*}
which is bounded by $(C\log|z|)|\xi-\xi'|$, where $C$ is independent of $z,\xi,\xi'$ (in fact depending only on the $\lambda_i^E,\lambda_i^V,\ord_0(e_{ij})$ and $\ord_0(v_{il})$---thus only on $\rho$ and the $T^\bbc$-action). Equation \ref{eq:slope1} then follows from this.

From this we deduce the slope formula. On the one hand, using the slope formula Lemma \ref{lem:slope} we find that
$$\lim_{z\to 0} \frac{\inf_{t\in T^{\C}}(\log\|t.\rho(z).e\| - \log\|t.\rho(z).v\|)}{\log|z|\mi} \leq \|(\rho,[e,v])\|_{T^\bbc}.$$
On the other hand, using \eqref{eq:slope1} we obtain
\begin{align*}\lim_{z\to 0} \frac{\inf_{t\in T^{\C}}(\log\|t.\rho(z).e\| - \log\|t.\rho(z).v\|)}{\log|z|\mi}&=\lim_{k\to \infty}\frac{f(\xi_\infty(z_k))}{\log|z_k|\mi}, \\ &=\|(\rho_{\xi_\infty},[e,v])\|, \\ &\geq \|(\rho,[e,v])\|_{T^\bbc},\end{align*}
concluding the proof.
\end{proof}

Using this, we may prove the following numerical criterion.

\begin{theorem}\label{polystable-HM} Assume the norm functional is proper.  Then pair $[v,w]$ is polystable if and only if it is numerically polystable.

\end{theorem}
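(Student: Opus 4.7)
The plan is to adapt the proof of Theorem \ref{thm:HMcriterion} to the Hilbert-scheme setting. By the same Hoskins-type valuative argument (now applied to the $G^H$-action on the relevant Hilbert scheme), the polystability condition $\overline{G^H.[C_u]} \cap \Hilb_{\des} = \varnothing$ is equivalent to: for every arc $\rho \in G^H(\field)$, the flat limit $C_0 := \lim_{z\to 0}\rho(z).C_u$ is not entirely contained in $\pr(0 \oplus (W^{\otimes k+1}\otimes V)^{H^{\C}})$. The task thus reduces to characterising this containment numerically.

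Since $\rho$ lies in the centraliser $G^H$ of $T^{\C}$, we have $\rho(z).C_u = C_{\rho(z).u}$, making $C_0$ a $T^{\C}$-invariant limit. Because $\pr(0\oplus\cdots)$ is a $T^{\C}$-invariant closed linear subspace of $\pr^N$, and any $T^{\C}$-orbit closure in projective space is the linear span of the $T^{\C}$-weight components of any of its points, the containment $C_0 \subset \pr(0\oplus\cdots)$ is equivalent to every $T^{\C}$-fixed point of $C_0$ lying in that subspace. The core technical step is to identify these fixed points with the twisted specialisations $u_0(\lambda) := \lim_{z\to 0}\rho_\lambda(z).u$: the $T^{\C}$-fixed points of each fibre $C_{\rho(z).u}$ are the points $[\rho(z).u_\alpha]$ with $\alpha$ a vertex of the Newton polytope of $u$ with respect to $T^{\C}$, and their specialisations into $C_0$ are precisely the $u_0(\lambda)$ as $\lambda$ ranges over the interiors of the maximal cells of the chamber decomposition of $N_{\R}$ determined by the data $\{(\alpha, \ord_0(\rho(z).u_\alpha))\}_\alpha$, each of which contains an element of $N_{\Z}$.

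Granting this identification, the proof of Theorem \ref{thm:HMcriterion} gives $u_0(\lambda) \in \pr(0\oplus\cdots)$ if and only if $\nu(\rho_\lambda, u) < 0$. A direct weight computation using Lemma \ref{lem:rules}, together with the fact that $T^{\C} \subset \Aut([v,w])$ forces $\nu(\rho_\lambda, [v,w]) = \nu(\rho, [v,w])$, yields $\nu(\rho_\lambda, u) = (k+1)\nu(\rho, [v,w]) - \|(\rho_\lambda, [v])\|$. Polystability therefore translates into: for every arc $\rho \in G^H(\field)$ there exists $\lambda \in N_{\Z}$ with $(k+1)\nu(\rho, [v,w]) \geq \|(\rho_\lambda, [v])\|$. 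Since $\lambda \mapsto \|(\rho_\lambda, [v])\|$ is integer-valued, piecewise affine and concave on $N_{\R}$, and is bounded below under the properness assumption (compare the analysis in Proposition \ref{prop:slopeequivfindim}), its infimum over $N_{\Z}$ is attained, so the condition rewrites as $(k+1)\nu(\rho, [v,w]) \geq \|(\rho, [v])\|_{T^{\C}}$, i.e.\ numerical polystability.

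The main obstacle will be the identification of the $T^{\C}$-fixed locus of $C_0$ with the integer-twisted limits $\{u_0(\lambda):\lambda\in N_{\Z}\}$: one must rule out the appearance of extra $T^{\C}$-fixed points in the scheme-theoretic limit (from embedded components or coincidences of vertices after degeneration), and verify that every relevant vertex of the Newton polytope of $u$ is actually reached by an \emph{integer} $\lambda$ rather than merely a real one. The latter will follow from the density of $N_{\Z}$ in $N_{\R}$ combined with the openness of the interior of each maximal cell, while the former reduces to a statement about the behaviour of $T^{\C}$-fixed-point schemes in flat toric degenerations inside $\pr^N$, where properness of the Hilbert scheme of fixed points together with the finite, constant generic cardinality should suffice.
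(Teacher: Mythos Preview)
Your approach differs from the paper's and has a genuine gap in the direction ``not numerically polystable $\Rightarrow$ not polystable''. The implication ``all $T^{\C}$-fixed points of $C_0$ lie in $\pr(0\oplus\cdots)$, hence $C_0\subset\pr(0\oplus\cdots)$'' is false in general: a $T^{\C}$-invariant subvariety need not lie in a $T^{\C}$-invariant linear subspace just because its fixed locus does. For instance, let $\C^*$ act on $\pr^2$ with weights $0,1,2$; the orbit closure of $[1:1:1]$ is a conic whose two fixed points $[1:0:0]$, $[0:0:1]$ both lie in $\{x_1=0\}$, yet the conic does not. The linear span of a toric orbit closure is spanned by \emph{all} weight components of a generic point, but only those at \emph{vertices} of the Newton polytope occur as fixed points in the closure; interior weight components are invisible to your fixed-point test. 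So even granting your identification of the fixed locus of $C_0$ with the $u_0(\lambda)$ (itself delicate, since $C_0$ is a flat limit rather than an orbit closure), this would not suffice. Incidentally, your concavity claim for $\lambda\mapsto\|(\rho_\lambda,[v])\|$ is also incorrect: since $\rho(z).v$ need not be a $T^{\C}$-eigenvector even when $v$ is, this function is a difference of two concave piecewise-linear functions and is neither convex nor concave in general.

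The paper avoids both issues by arguing analytically through the log-norm functional rather than through fixed points. The key observation is that the support of $C_0$ is exactly the set of subsequential limits of points $t_z.\rho(z).u$ with $t_z\in T^{\C}$ arbitrary (this is what the flat limit in the Hilbert scheme means set-theoretically). For the hard direction one shows that \emph{every} such limit lies in $\pr(0\oplus\cdots)$: the relevant log-norm at an arbitrary $t_z$ is bounded above by its value at the infimising sequence $t'_z$ (whose existence uses properness via Proposition~\ref{prop:slopeequivfindim}), and the latter has slope $(k+1)\nu(\rho,[v,w])-\|(\rho,[v])\|_{T^{\C}}<0$, hence tends to $-\infty$. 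Your outline does handle the easier direction ``numerically polystable $\Rightarrow$ polystable'' correctly, since there one only needs a single point $u_0(\lambda)\in C_0$ lying outside the bad locus.
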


\begin{proof} Suppose $[v,w]$ is not polystable, so that by definition $\overline{G^H.[C_u]}\cap \Hilb_{\des} \neq \varnothing$, where $u= [e^{\otimes \deg V}\otimes v^{\otimes k+1}, w^{\otimes k+1}\otimes v]$. Then as in Theorem \ref{thm:HMcriterion},  we may find an arc $\rho$ specialising $[C_u]$ to  $\Hilb_{\des}$, so we obtain a sequence $C_{p_z}$ for $z\to 0$ such that, by definition of the Hilbert scheme and ultimately the flat limit, the limit $[D] \in \Hilb_{\des}$ of  $[C_{u_z}]$ is obtained by taking the Zariski closure of the $C_{u_z}$ across the origin in $\Spec\ring$. The Zariski closure agrees with the closure in the analytic topology, so all limits of all possible sequences of the form $t_z.g_z.u$ lie in $D$ as $z\to 0$, where $g_z$ denotes the elements of $G$ associated to $\rho$. 

We claim that the arc $\rho$  that we have constructed satisfies $$(k+1)\nu(\rho, [v,w]) < \|(\rho,[v])\|_{T^{\C}}.$$ Consider the $\xi$ infimising the reduced norm produced by Proposition \ref{prop:slopeequivfindim}, and let $\rho_{\xi}(z)$ be the associated path in $\pr^N$. Then by polyinstability, the limit set $D$ along all possible twists lies entirely in $\pr\left(0\oplus (W^{\otimes k+1}\otimes V)^{H^{\C}}\right)$, so the limit along $ \rho_{\xi}(z)$ lies in $\pr\left(0\oplus (W^{\otimes k+1}\otimes V)^{H^{\C}}\right)$. It follows from the proof of Theorem \ref{thm:HMcriterion} that the weight, which is $(k+1)\nu(\rho, [v,w])- \|(\rho,[v])\|_{T^{\C}},$ must be strictly negative, a contradiction.

Conversely, if $[v,w]$ is not numerically polystable, then consider a destabilising arc $\rho$ with infimiser of the norm $\xi$. Thus if $t_z'$ is such that it infimises $\log \|t_z.(g_z)^{\otimes \deg V}\|-\log\|t_z.g_z.v\|$ over $T^{\C}$, then since by assumption the arc destabilises, it follows that the limit of the $t_z'.g_z(u)$ has limit lying in $\pr\left(0\oplus (W^{\otimes k+1}\otimes V)^{H^{\C}}\right)$. We claim that this is further true for all possible limits of all possible sequences of $g_z(u)$, which will be enough to conclude. To show that, firstly note that by definition, a sequence of points $t_z.g_z(p)$ converges to a point in $\pr(0\oplus (W^{\otimes k+1}\otimes V))$ if and only if $$(\log|g_z.v|-\log|g_z.w|)-(k+1)\mi(\log |t_z.(g_z)^{\otimes \deg V}|-\log|t_z.g_z.v|)\to -\infty,$$ and we know that for $t_z'$ the infimisers of the norm $$\log |t_z.(g_z)^{\otimes \deg V}|-\log|t_z.g_z.v|$$ over arbitrary $t_z$ that this holds. Since \begin{align*}&(\log|g_z.v|-\log\|g_z.w|)-(k+1)\mi(\log |t_z.(g_z)^{\otimes \deg V}|-\log|t_z.g_z.v|) \\ & \leq  (\log|g_z.v|-\log|g_z.w|)-(k+1)\mi(\log |t\_z.(g_z)^{\otimes \deg V}|-\log|t'_z.g_z.v|),\end{align*} and since the right hand side is of the form $$((k+1)\nu(\rho, [v,w]) - \|(\rho,[v])\|_{T^{\C}})\log|z|\mi,$$ it tends to $-\infty$ as $z\to 0$ since the arc is destabilising, so it follows that the sequence of points $t'_z.g_z(p)$ converges to a point in $\pr\left(0\oplus (W^{\otimes k+1}\otimes V)^{H^{\C}}\right)$ as well. Thus the resulting $D$, which consists of all possible limit points  of the $C_{g_z(u)}$, lies in $\Hilb_{\des}$, and so $[v,w]$ is not polystable.
\end{proof}

We next prove that polystability is further equivalent to analytic polystability.

\begin{corollary}  Assume the norm functional is proper.  A pair $[v,w]$ is polystable if and only if it is analytically polystable. \end{corollary}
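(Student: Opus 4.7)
The plan is to derive this equivalence from Theorem \ref{polystable-HM}, which already identifies polystability with numerical polystability, by showing the latter is equivalent to analytic polystability.

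For the direction analytically polystable $\Rightarrow$ numerically polystable (hence polystable), the idea is to test the coercivity along an arbitrary arc $\rho \in G^H(\field)$ by evaluating at $g = \rho(z)$. Lemma \ref{lem:slope} applied to the pair $[v,w]$ gives the asymptotic behavior of the left hand side as $\nu(\rho,[v,w])\log|z|^{-1} + O(1)$, while Proposition \ref{prop:slopeequivfindim} applied to the pair $[e^{\otimes \deg V}, v]$---using properness of the norm functional to realize the $T^{\C}$-infimum asymptotically by an irrational twist $\xi \in N_{\R}$---handles the right hand side, producing a linear leading term in $\log|z|^{-1}$ whose coefficient is (up to sign) the reduced weight $\|(\rho,[v])\|_{T^{\C}}$. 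Dividing by $\log|z|^{-1}$ and passing to the limit $z \to 0$ yields the arc-weight inequality; selecting an integer $k$ with $(k+1)^{-1}$ compatible with $\epsilon$ (and invoking nonnegativity of the reduced norm) produces numerical polystability. Theorem \ref{polystable-HM} then concludes.

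For the converse polystable $\Rightarrow$ analytically polystable, the plan is to apply Paul's analytic criterion (Theorem \ref{thm:paullognorm}) to the auxiliary pair $[e^{\otimes \deg V} \otimes v^{\otimes k+1}, w^{\otimes k+1} \otimes v]$ in $\pr^N$ under the enlarged $(G^H \times T^{\C})$-action specified preceding Definition \ref{def:polystable}. Polystability of $[v,w]$, reformulated through the Hilbert-scheme construction, supplies semistability of this enlarged pair; this translation parallels the arc-extraction argument already carried out in the proof of Theorem \ref{polystable-HM}, and uses the weight identity of Lemma \ref{lem:rules} together with the commutation of $G^H$ with $T^{\C}$. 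Paul's theorem then produces a log-norm bound, which after expanding via the Hermitian tensor-norm identities and taking a supremum over $t \in T^{\C}$ in the resulting inequality (permitted by the said commutation) yields precisely the coercivity estimate of Definition \ref{def:anal-poly}. The properness hypothesis guarantees that the $T^{\C}$-infimum is pointwise realized, so the manipulation is legitimate.

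The main obstacle lies in the second direction, specifically the bookkeeping needed to reconcile the Hilbert-scheme formulation of polystability with the analytic coercivity: one must track which tensor factors the auxiliary $T^{\C}$-action operates on, ensure the resulting infima and suprema correspond correctly across the translation, and verify that Paul's semistability bound for the enlarged pair yields the reduced coercivity rather than a weaker one-sided estimate. No genuinely new analytic input beyond Theorem \ref{thm:paullognorm} and the slope asymptotic of Proposition \ref{prop:slopeequivfindim} should be required, but the translation between discrete weight inequalities, torus-reduced norms, and continuous coercivity estimates is delicate and mirrors in reverse the strategy used for Theorem \ref{polystable-HM}.
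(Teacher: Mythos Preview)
Your first direction (analytically polystable $\Rightarrow$ polystable) is correct and matches the paper's argument exactly: one passes through numerical polystability via the slope formulae of Lemma \ref{lem:slope} and Proposition \ref{prop:slopeequivfindim}, then invokes Theorem \ref{polystable-HM}.

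Your second direction contains a genuine gap. You claim that polystability of $[v,w]$ ``supplies semistability'' of the auxiliary pair $u=[e^{\otimes \deg V}\otimes v^{\otimes k+1},w^{\otimes k+1}\otimes v]$ under the enlarged $G^H\times T^{\C}$-action, after which Paul's Theorem \ref{thm:paullognorm} would give the coercivity bound. But this implication is false. At the numerical level, semistability of the enlarged pair demands $(k+1)\nu(\rho,[v,w])\geq\|(\rho_\lambda,[v])\|$ for \emph{every} $\lambda\in N_{\Z}$, hence $(k+1)\nu(\rho,[v,w])\geq\sup_\lambda\|(\rho_\lambda,[v])\|$, whereas numerical polystability only asks for $(k+1)\nu(\rho,[v,w])\geq\inf_\lambda\|(\rho_\lambda,[v])\|$. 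The former is strictly stronger (and in fact typically fails, since the norm $\|(\rho_\lambda,[v])\|$ is unbounded in $\lambda$ by properness). Correspondingly, even if you had semistability of the enlarged pair, Paul's bound would yield an inequality of the form $(k+1)f(g)+N(g,t)\geq-\delta$ for all $t$, which after optimising in $t$ gives $f(g)\geq-(k+1)^{-1}\inf_t N(g,t)-\delta'$---the wrong sign in front of the reduced norm compared to Definition \ref{def:anal-poly}.

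The paper instead argues this direction by contrapositive, bypassing Paul's theorem entirely. Assuming analytic polystability fails, one extracts a sequence $g_j\in G^H$ with $f(g_j)-(k+1)^{-1}\inf_t N(g_j,t)\to-\infty$; letting $t'_j$ achieve the infimum, the point $t'_j.g_j(u)$ tends to the bad locus $\pr(0\oplus(W^{\otimes k+1}\otimes V)^{H^{\C}})$. The crucial step---precisely the monotonicity argument already used in the proof of Theorem \ref{polystable-HM}, now applied to sequences rather than arcs---is that because $t'_j$ \emph{infimises} the norm, the same divergence holds for \emph{every} choice of $t_j\in T^{\C}$. Hence every limit point of $C_{g_j(u)}=\overline{T^{\C}.g_j(u)}$ lies in the bad locus, so $[C_{g_j(u)}]\to\Hilb_{\des}$ and $[v,w]$ is not polystable. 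No appeal to semistability of an auxiliary pair is made or needed.
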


\begin{proof}

If $[v,w]$ is analytically polystable, then it is numerically polystable by Theorem \ref{prop:slopeequivfindim} and hence is polystable by Theorem \ref{polystable-HM}.

Conversely, suppose $[v,w]$ is not analytically polystable. Then we find a sequence $g_j, t'_j$, with $t'_j$ achieving the infimimum of the norm, such that $t'_j.g_j(u)$  have limit lying in $ \pr\left(0\oplus (W^{\otimes k+1}\otimes V)^{H^{\C}}\right)$ as $j\to\infty$. It follows from the same argument as Proposition \ref{polystable-HM} that in fact then for any possible sequence $t_j \in T^{\C}$, the $t_j.g_j(u)$ also converges (subsequentially, along any possible subsequence) to a point in $\pr\left(0\oplus (W^{\otimes k+1}\otimes V)^{H^{\C}}\right)$. So  all limit points of $T^{\C}.g_j(u)$  lie in $\pr\left(0\oplus (W^{\otimes k+1}\otimes V)^{H^{\C}}\right)$,  meaning that along this sequence $[C_{g_j(u)}]$ converges to a point in $\Hilb_{\des}$. This implies that $$\overline{G^H.[C_p]}\cap \Hilb_{\des} \neq \varnothing,$$ proving polyinstability and hence the result. \end{proof}

Thus obtain the main result of the section:

\begin{corollary}  Assume the norm functional is proper.  Then the following are equivalent:
\begin{enumerate}[(i)]
\item the pair $[v,w]$ is polystable;
\item  the pair $[v,w]$ is numerically polystable;
\item  the pair $[v,w]$ is analytically  polystable.
\end{enumerate}
\end{corollary}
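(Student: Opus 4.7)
The plan is to directly assemble the two equivalences already established in the preceding portion of the section. Theorem \ref{polystable-HM} provides (i) $\Leftrightarrow$ (ii) under the properness hypothesis, while the corollary immediately preceding (stating that polystability is equivalent to analytic polystability) provides (i) $\Leftrightarrow$ (iii) under the same hypothesis. Chaining the two gives the desired three-way equivalence.

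The substantive work has already been done upstream and does not need to be redone. The crucial input is the slope formula of Proposition \ref{prop:slopeequivfindim}, which identifies the leading asymptotics of $\inf_{t\in T^{\C}}(\log|t.\rho(z).e| - \log|t.\rho(z).v|)$ as $\nu_{T^{\C}}(\rho,[e,v])\log|z|^{-1} + O(1)$, together with the existence of an infimising $\xi \in N_{\R}$ guaranteed by properness. This slope formula is what bridges the analytic condition (Definition \ref{def:anal-poly}) and the numerical condition, by allowing one to pass between a uniform coercivity estimate in $g\in G^H$ and a weight inequality along each arc. The bridge between these and the geometric polystability condition (Definition \ref{def:polystable}) is provided by the destabilising-arc construction modelled on Theorem \ref{thm:HMcriterion}, which is applied to the Hilbert scheme of $T^{\C}$-orbit closures inside $\pr^N$.

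So the proof of the corollary is essentially a one-line assembly: (i) $\Leftrightarrow$ (ii) by Theorem \ref{polystable-HM}, and (i) $\Leftrightarrow$ (iii) by the preceding corollary; hence (i), (ii), and (iii) are equivalent. No new obstacle arises at this stage, since all the delicate analysis (properness-based existence of infimisers, continuity of $\xi \mapsto \nu(\rho_{\xi},[v,w])$, the limit-point comparison for $T^{\C}$-twists versus arbitrary twists, and the Hilbert scheme argument) has been handled in the previous proofs. The sole verification needed is that the hypothesis of the corollary--properness of the norm functional--is exactly the hypothesis required by both Theorem \ref{polystable-HM} and the preceding corollary, which is immediate.
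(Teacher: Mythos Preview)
Your proposal is correct and matches the paper's own treatment: the corollary is simply the combination of Theorem~\ref{polystable-HM} (giving (i)~$\Leftrightarrow$~(ii)) and the immediately preceding corollary (giving (i)~$\Leftrightarrow$~(iii)), with no additional argument needed. The paper itself presents this as an immediate summary, introducing it only with ``Thus obtain the main result of the section.''
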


\section{Applications to the Mabuchi functional}\label{sec:applications}

We next apply the theory of stability of pairs to link K-stability to K\"ahler geometry. We begin by recalling the definition of various energy functionals and Paul's work on the Mabuchi functional, before proving slope formulas for the Mabuchi functional along arcs, which will allow us to prove Theorem \ref{intromainthm}, which is the main result of our paper. 

\subsection{Functionals in K\"ahler geometry}\label{sec:functionals}

Assume $(X,L)$ is a smooth polarised variety, and recall from Section \ref{sect:normsaut} that $H$ is a maximal compact subgroup of the reductive part of $\Aut(X,L)$, with $T\subset H$ a maximal torus. Fix $\omega\in c_1(L)$ a $H$-invariant K\"ahler metric and let $$\H = \left\{\phi\in C^{\infty}(X,\R): \omega+\ddb\phi>0\right\};$$ for $\phi \in \H$ we set $\omega_{\phi} = \omega+\ddb \phi$. We denote by $\H^H$ the space of $H$-invariant K\"ahler potentials. The reference form $\omega$ will be fixed throughout and is left implicit in the notation.

The complex torus $T^{\C}$ acts on $\H$ by requiring that for $t\in T^{\C}$ $$\omega+\ddb t. \phi = t^*(\omega+\ddb \phi);$$ in this way $t. \phi$ is defined only up to the addition of a constant, and when using this action of $T^{\C}$ on $\H$ we will only use the equivalence class of $\phi$ under addition of constants.

\begin{definition}\label{def:functionals} We define the  \emph{Mabuchi functional} of $\phi \in \H$  to be $$M(\phi) = H(\phi) + R(\phi)+\frac{n}{n+1}\mu(X,L)E(\phi),$$ where

\begin{enumerate}[(i)]
    \item $E(\phi) = \frac{1}{(n+1)L^n}\sum_{j=0}^{n+1}\int_X\phi\omega^j\wedge\omega_{\phi}^{n-j};$
 \item  $R(\phi) =\frac{1}{L^n}\int_X\phi\left(\sum_{j=0}^{n-1}\omega_{\phi}^j\wedge\omega^{n-j}\right)\wedge \Ric \omega;$
 \item $H(\phi) =\frac{1}{L^n}\int_X \log\left({\frac{\omega_{\phi}^n}{\omega^n}}\right)\omega_{\phi}^n.$
\end{enumerate}
We further define the \emph{$J$-functional} and the  \emph{reduced $J$-functional} to be 
\begin{align*}J(\phi) = \int_X\phi\omega^n - E(\phi); \qquad J_{T}(\phi) = \inf_{t\in T^{\C}}J(t. \phi).
\end{align*}
\end{definition}

The functionals $J$ and $J_{T}$, much like their non-Archimedean counterparts previously introduced, play a role analogous to norms.

\begin{definition} Let $A\subset \H$ be a subset. We say that the Mabuchi functional is 

\begin{enumerate}[(i)]
    \item \emph{coercive} on $A$ if there exist $\epsilon,\delta>0$ such that for all $\phi \in A$ $$M(\phi)\geq \epsilon J(\phi) - \delta.$$
\item \emph{(reduced) coercive} on $A$ if there exist $\epsilon,\delta>0$ such that for all $\phi \in A$ $$M(\phi)\geq \epsilon J_{T}(\phi) - \delta.$$
\end{enumerate}
 \end{definition}

We will mostly be interested in coercivity on $\H$ or $\H^H$, but also on the space of Fubini--Study metrics under an embedding in projective space. Given an embedding $X\hookrightarrow \pr(H^0(X,rL))$ using global sections of $rL$ for $r>0$, restricting the Fubini--Study metric from $\pr(H^0(X,rL))$ produces a K\"ahler metric $r\mi \omega_{FS} \in c_1(L)$. The space of Fubini--Study metrics is then isomorphic to the quotient $\GL(H^0(X,rL))/\moU(H^0(X,rL))$, and we denote this subset of the space of K\"ahler metrics by $\H_{r}$.

\subsection{Paul's results on the Mabuchi functional}\label{subsec:paul}

We now assume $X$ is embedded into projective space $\pr^N = \pr(H^0(X,rL)^*)$ through the linear system $|rL|$ for some $r$ fixed. Since the embedding is linearly normal, we may associate to $X$ its \emph{Chow point} and its \emph{discriminant point} (or \emph{hyperdiscriminant point}; the terminology is not consistent in the literature), as we next recall.

We begin with the Chow point of $X$. Since $X$ has dimension $n$, a generic linear subspace of $\pr^N$ of dimension $N-n$ meets $X$ in $d= (rL)^n$ points, while a generic linear subspace of dimension $N-n-1$ does \emph{not} intersect $X$. We thus consider the set $$\left\{H \in \Grass\left(N-n-1,\pr^N\right): X \cap H\neq \varnothing\right\} \subset \Grass\left(N-n-1,\pr^N\right);$$ this is a divisor in $\Grass\left(N-n-1,\pr^N\right)$ of degree $d$ \cite[Chapter 3, Proposition B.2.2]{book:gkz}, and thus the vanishing locus of a section $R(X) \in H^0\left(\Grass(N-n-1,\pr^N), \scO(d)\right)$ well-defined up to scaling. We call $R(X)$ the \emph{Chow point} of $X$. A basic property of the Chow point is that $$R(g^*X) = g^*(R(X))$$ for $g\in\GL(N+1)$. 

We next define the discriminant point of $X$, using a similar procedure. Consider the set $$\left\{H \in \Grass\left(N-n,\pr^N\right): |X \cap H|\neq d\right\} \subset \Grass\left(N-n,\pr^N\right);$$ that is, where the intersection does not consist of $d$ points. This is equivalent to asking that there is an $x\in X$ with $$\dim H \cap ET_xX \geq 1,$$ where $ET_xX \subset \pr^N$ is the embedded tangent space. This set is again a divisor in $\Grass\left(N-n,\pr^N\right)$ \cite[Chapter 3, Proposition E.2.11]{book:gkz} and so we obtain a point $\Delta(X) \in H^0\left(\Grass\left(N-n,\pr^N\right), \scO(\deg(\Delta(X)))\right)$ which we call the \emph{discriminant point} of $X$. This point satisfies the same equivariance property $$\Delta(g^*X) = g^*(\Delta(X))$$ for $g\in\GL(N+1)$. Paul considers instead a point defined using the dual variety \cite{paul:hyperdiscriminants}; in the situation we consider here (namely a smooth, linearly normal variety), these are equivalent \cite{kapadia}.

Let us write $$R(X) \in H^0\left(\Grass\left(N-n-1,\pr^N\right), \scO(d)\right)$$ and $$\Delta(X) \in H^0\left(\Grass\left(N-n,\pr^N\right), \scO(d)\right),$$ producing a pair $[R(X),\Delta(X)]$ in the sense of Section \ref{sec:pairs}. We now recall the following fundamental result of Paul (and Tian for the $J$-functional \cite[Lemma 3.2]{tian-cm}), for which we embed $\GL(N+1) \subset \C^{N+1\times N+1}$.

\begin{theorem}[{\cite{paul:hyperdiscriminants}}]\label{thm:paul} There are Hermitian inner products on $V$, $W$ and $\C^{N+1\times N+1}$ such that for $g\in \GL(N+1)$ and $\varphi$ the fixed Fubini--Study metric associated to the embedding, we have \begin{align*}\log|\Delta(g^*X)| - \frac{\deg\Delta(g^*X)}{\deg R(g^*X)}\log |R(g^*X)| &= (L^n)M(g.\phi)+const.; \\ \log|g|-\deg R(g^*X)\mi\log |R(g^*X)| &= J(g.\phi) + const.\end{align*}

\end{theorem}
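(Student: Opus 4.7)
The plan is to follow the approach of Tian (for the $J$-functional) and Paul (for the Mabuchi functional), which identifies these analytic functionals with logarithmic norms of classical resultant-type invariants via the Cayley trick of elimination theory. The core mechanism is that the Chow and discriminant forms, viewed as sections of line bundles on the relevant Grassmannians, carry natural $\GL(N+1)$-equivariant Hermitian norms whose logarithms are computable as integrals of Fubini--Study potentials against currents of integration on $X$.

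First I would treat the $J$-functional case, which is Tian's original calculation. The Chow point $R(X)$, as a section of $\scO(d)$ on $\Grass(N-n-1,\pr^N)$, has logarithmic norm that can be expressed via Crofton's formula as an integral over $X$ of logarithms of sections of $\scO(1)$ vanishing on the universal incident linear subspace. After averaging over the Grassmannian and using the cocycle identity $\int_X\phi\,\omega^n = E(\phi) + J(\phi)$, one extracts the relation between $\log|R(g^*X)|$ and $J(g.\phi)$, with the contribution $\log|g|$ appearing as the transformation of the pointwise norms of sections under $g$.

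Next, for the Mabuchi formula, I would use the analogous trick for $\Delta(X)$, where tangent hyperplanes replace arbitrary hyperplanes in the Crofton-type integral. This expresses $\log|\Delta(g^*X)|$ as an integral over $X$ of logarithms of pullbacks of tautological sections restricted to directions tangent to $X$. Combining the adjunction formula $K_X = (K_{\pr^N}+X)|_X$ with the Poincar\'e--Lelong formula, this integral naturally decomposes into a Monge--Amp\`ere contribution and a Ricci-curvature contribution, the latter encoding $H(g.\phi)+R(g.\phi)$ via the Chern--Weil identification of $\Ric\omega_\phi$ with the curvature of $-K_X$. The weighting factor $\deg\Delta/\deg R$ is designed precisely to cancel the Monge--Amp\`ere part coming from $\Delta$, leaving only the genuine Mabuchi terms, while the $\frac{n}{n+1}\mu(X,L)E$ contribution reassembles from both sides of the cancellation.

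The hard part will be making the Cayley-trick identities rigorous in the singular setting: both $R(X)$ and $\Delta(X)$ cut out subvarieties of the Grassmannians with non-generic singularities, so extracting their logarithmic norms as honest integrals against $\omega_\phi^n$ requires Bedford--Taylor theory for plurisubharmonic functions with analytic singularities, together with control over the integrability along the dual variety. A second subtlety is matching constants and degrees exactly: one must verify via a Bott--Chern secondary-class computation (or the Knudsen--Mumford expansion of the Deligne pairing $\langle L, \ldots, L\rangle$ with one slot replaced by $K_{X/\pr^N}$) that the Ricci contribution extracted from $\Delta$ matches $H(g.\phi)+R(g.\phi)$ on the nose rather than differing by a nontrivial auxiliary functional, and that the $\log|g|$ terms coming from the Chow and discriminant calculations conspire to produce exactly the coefficients stated.
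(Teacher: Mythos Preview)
The paper does not prove this theorem: it is stated as a result of Paul \cite{paul:hyperdiscriminants} (with the $J$-functional identity attributed to Tian \cite[Lemma 3.2]{tian-cm}), and no argument is given beyond the citation. The only additional remark is that Kapadia \cite{kapadia} shows the constants vanish when the Hermitian inner products are chosen via Deligne pairings. So there is no proof in the paper to compare your proposal against.

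That said, your outline is broadly in the spirit of the cited sources: Tian's computation for the Chow norm and Paul's for the hyperdiscriminant do proceed by expressing the log norms as integrals over $X$ of Fubini--Study-type potentials, and the Mabuchi identity does ultimately emerge from a cancellation between the two. A few cautions if you intend to flesh this out. First, Paul's actual argument in \cite{paul:hyperdiscriminants} works with the hyperdiscriminant via the dual variety of a Segre embedding rather than directly with tangent hyperplanes on $\Grass(N-n,\pr^N)$; the equivalence with the discriminant point as defined here is the content of Kapadia's thesis \cite{kapadia}, so you should be careful about which object you are computing with. Second, your description of how the entropy term $H$ appears is vague: in Paul's approach the Ricci contribution arises from a Bott--Chern form associated to the Gauss map, not from adjunction as you suggest, and getting the exact coefficient $\frac{n}{n+1}\mu(X,L)$ on the $E$-term is a genuine computation rather than an automatic cancellation. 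Third, the ``hard part'' you flag about singularities of the incidence varieties is largely a non-issue here since $X$ is assumed smooth and linearly normal; the delicate analysis is rather in the exact identification of degrees and the matching of constants, which is where Kapadia's Deligne-pairing approach becomes cleaner than Paul's original.
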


In \cite{kapadia}, Kapadia further shows that the constants in Theorem \ref{subsec:paul} vanish, if one chooses the Hermitian inner products to be  induced by metrics on Deligne pairings.

\begin{remark}
Variants of Paul's work have been proven by Hashimoto--Keller for the Donaldson functional involved in the theory of Hermite--Einstein metrics \cite{hashimoto-keller}, and by Westrich for the higher K-energies \cite{westrich} (which are variants of the Mabuchi functional related to metrics with harmonic $k$\textsuperscript{th}-Chern form).
\end{remark}

\subsection{Deligne pairings and slope formul\ae }\label{subsect:deligne}
Our next aim is to relate energy functionals in Kähler geometry with their non-Archimedean counterparts associated to models.  This is done by extending  slope formul\ae\, for Deligne pairings to the setting of models. The techniques originate we use originate with the work of  Phong--Ross--Sturm \cite{phongrosssturm} for test configurations (with the general philosophy originating  with Ding--Tian \cite{ding-tian} and Tian \cite{tian:kems, tian-cm} by other techniques). Sharp results for the Mabuchi functional for test configurations were later established by Boucksom--Hisamoto--Jonsson \cite{bhj:asymptotics}, as well as by Hisamoto \cite{hisamoto:toric} and Li \cite{chili:guniform} for the reduced J-functional. Some of our more general results for models have been partially established by the second author under the additional assumption of global (rather than relative) positivity of the various metrics on the model  \cite[Theorem B(iii)]{reb:3}; we give here a more direct proof that drops the global positivity assumption (which does not hold in our applications) and extend it further to the Mabuchi functional and the reduced $J$-functional.

The energy functionals of Section \ref{sec:functionals}  fall into the general theory of (metrised) Deligne pairings, which we first explain. We consider a projective flat morphism $\pi: Y \to S$ of relative dimension $n$; in applications, $S$ will be $\Spec\ring,\, \Spec\field,\, \C,\, \C^*$ or simply a point. Given $n+1$ line bundles $L_0,\ldots, L_n$, the pushforward $$\pi_*(L_0\cdot \ldots \cdot L_n) \in A^{1}(S)$$ is a cycle of codimension one in $S$, and the Deligne pairing construction (for which we refer to \cite{deligne:detcoh,elkik:1,boueri} for more details) produces a natural line bundle $$\langle L_0,\ldots, L_n\rangle_{Y/S}$$ representing this cycle class. This construction is multilinear, symmetric, functorial, and commutes with base change.

We now specialise to our setting: we fix $n+1$ line bundles $L_0,\ldots,L_n$ on $X$ and models $(\X_i,\L_i)$ for each $i=0,\ldots, n$; we do not impose any positivity condition on $X$. By functoriality, we may pass to a resolution of indeterminacy (compatible with the identifications $(\X_{i,\field},\L_{i,\field})\cong (X_{\field},L_{\field})$) and assume that the varieties $\X_i$ are equal to some fixed $\X$. We often extend $\pi: (\X,\L_i)\to\Spec\ring$ to $\pi: (\X_{\C},\L_{i,\C})\to\Spec\C$; the resulting Deligne pairings will be canonically isomorphic on $\Spec\ring$ again by functoriality, following the process of Example \ref{ex:different-models}. The properties of Deligne pairings mentioned above then in particular yield the following:
\begin{enumerate}[(i)]
    \item a canonical isomorphism $\langle \L_0,\dots,\L_n\rangle_{\X_{\field}/\field}\cong \langle L_0,\dots,L_n\rangle_{X_{\field}/\field}$ as line bundles over $\Spec\field$ ;
    \item an identification of $(\Spec \ring,\langle \cL_0,\dots,\cL_n\rangle_{\cX/\ring})$ as a model of its restriction $(\Spec\field,\langle L_0,\dots,L_n\rangle_{X_{\field}/\field})$.

\end{enumerate}

The Deligne pairing construction may be refined at the level of metrics, as established by Elkik \cite{elkik:2}. The theory again applies in generality, but we limit our exposition to the setting of our applications. In order to invoke analytic geometry, we use an extension of the models and metrics to $\pi: (\X_{\C},\L_{i,\C}) \to \C$, though one could equally work, for example, over the unit disc $\Delta$. We may further assume that $\X_{\C}$ admits a morphism  $\sigma: \X \to X\times\C$ (namely that it is dominant in the sense of Definition \ref{def:dominant}) extending the identification $(\X_{\C^*},\L_{i,\C^*}) \cong (X\times\C^*, L_i)$, by another application of resolution of indeterminacy. We then endow each $\L_i$ with a Hermitian metric $h_i$ with curvature $\omega_i \in c_1(\L_i)$. Then Elkik constructs a Hermitian metric $\langle h_0,\ldots h_n\rangle_{\X_{\C}/\C}$ (which we call a Deligne metric) on the Deligne pairing $\langle \L_0,\ldots \L_n\rangle_{\X_{\C}/\C}$, which is continuous by Moriwaki \cite{moriwaki:deligne} and is suitably symmetric, functorial and satisfies the appropriate base-change property. 

Given a second metric $h_0e^{-\phi_0}$ on $\L_0$, the quotient of the metrics $\langle h_0e^{-\phi_0},h_1,\ldots,h_n\rangle$ and $\langle h_0,h_1,\ldots,h_n\rangle$ can be identified with (the exponential of minus) a function on $\C$, and the Deligne metric satisfies the  ``change of metric'' formula that this function takes value at $z\in \C$ $$\int_{X_z}\phi_0\omega_{1,z}\wedge\ldots\wedge\omega_{n,z}.$$

We endow each $L_i$ with a reference Hermitian metric $h_i^{\mathrm{ref}}$ with curvature $\omega_i \in c_1(L_i)$, producing a Deligne metric on $\langle L_0,\ldots,L_n\rangle_{X/\{pt\}}$ of $X$ over a point (namely a Hermitian metric on the one-dimensional vector space $\langle L_0,\ldots,L_n\rangle_{X/\{pt\}}$), and similarly on  $\langle L_{0,\bbc},\ldots,L_{n,\bbc}\rangle_{X_{\C}/\C}$. Through the identification $(\X_{\C^*},\L_{i,\C^*}) \cong (X\times\C^*, L_i)$, we thus have two Hermitian metrics  on each line bundle $\L_{i,z}$ on $\X_z$, and their quotient is (the exponential of minus) a function which we write as $\phi_{i,z} \in C^{\infty}(\X_z,\R)$. By the Deligne pairing machinery we thus further obtain two metrics on $\langle \L_0,\dots,\L_n\rangle_{\X_{\field}/\field}=\langle L_0,\dots,L_n\rangle_{X_{\field}/\field}$ whose quotient is  (the exponential of minus) a function $f: \C\to\R$ which we write $$f(z)=\langle \phi_{0,z},\ldots, \phi_{n,z}\rangle:=-\log\langle h_{0,z},\dots,h_{n,z}\rangle_{X/\{pt\}}+\log\langle h_0^{\mathrm{ref}},\dots,h_n^{\mathrm{ref}}\rangle_{X/\{pt\}},$$ where we use the identification of $\X_z$ with $X$.

We consider the value of this function as $z\to 0$. The following result will only involve the germ of the forms $\omega_i$ near $\X_0$, so can meaningfully be considered as the behaviour of $\omega_i$ restricted to $\X \to \Spec\ring$ itself. The following should be compared to Lemma \ref{lem:slope}.

\begin{theorem}\label{thm:slope} As $z\to 0$ the  function $f(z)$ satisfies $$f(z) = (\L_0\cdot\ldots\cdot \L_n)\log|z|^{-1} + O(1).$$
\end{theorem}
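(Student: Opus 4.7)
The approach is to compactify the family to $\pr^1$ and identify $f$ as the log of a ratio of two metrics on a single line bundle on $\pr^1$ whose degree is the desired intersection number. First, I would extend $\pi: \X \to \Spec \ring$ to a projective family $\pi_{\pr^1}: \X_{\pr^1} \to \pr^1$ by gluing $\X$ with the trivial family $X \times (\pr^1 \setminus \{0\})$ along the identification of their restrictions to $\C^*$, as in Example \ref{ex:different-models}; each $\L_i$ then extends to a line bundle $\L_{i,\pr^1}$ on $\X_{\pr^1}$ that agrees with $L_i$ on the trivial part. After a resolution of indeterminacy if needed, the Deligne pairing $\D := \langle \L_{0,\pr^1}, \ldots, \L_{n,\pr^1}\rangle_{\X_{\pr^1}/\pr^1}$ is a line bundle on $\pr^1$ whose degree equals $\pi_{\pr^1,*}(\L_{0,\pr^1} \cdot \ldots \cdot \L_{n,\pr^1}) = \L_0 \cdot \ldots \cdot \L_n =: d$, using the equivalence of the two intersection-theoretic formalisms for models discussed in Section \ref{sec2}.

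The key geometric observation is that $\D$ admits a canonical trivialization on $\pr^1 \setminus \{0\}$. Indeed, the family is trivial there, so by functoriality of Deligne pairings $\D|_{\pr^1 \setminus \{0\}}$ is canonically isomorphic to the constant line bundle $\langle L_0, \ldots, L_n\rangle_{X/\{pt\}} \otimes \scO_{\pr^1 \setminus \{0\}}$. Any nonzero element of $\langle L_0, \ldots, L_n\rangle_{X/\{pt\}}$ then yields a nowhere-vanishing section $\tau$ of $\D$ over $\pr^1 \setminus \{0\}$; viewed as a meromorphic section on all of $\pr^1$, its divisor is concentrated at $\{0\}$ with total degree $\deg \D = d$. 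Hence near $z = 0$ we may write $\tau = z^d \tilde\tau$, where $\tilde\tau$ is a holomorphic trivialization of $\D$ in a neighborhood of the origin.

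Finally, the Hermitian metrics $h_i$ on $\L_{i,\C}$ induce a Deligne metric $\|\cdot\|_h$ on $\D|_\C$, continuous by Moriwaki's theorem cited in Section \ref{subsect:deligne}, while the reference metrics $h_i^{\mathrm{ref}}$ induce a metric $\|\cdot\|_{h^{\mathrm{ref}}}$ on $\D|_{\pr^1 \setminus \{0\}}$ whose value on the canonical section $\tau$ is a constant $c$ (since $\tau$ comes from a fixed element of $\langle L_0,\ldots,L_n\rangle_{X/\{pt\}}$). By the construction of $f$ via the identification $\X_z \cong X$, one has $f(z) = -\log \|\tau(z)\|_h + \log \|\tau(z)\|_{h^{\mathrm{ref}}}$; substituting $\tau = z^d \tilde\tau$ yields
$$f(z) = -d \log|z| - \log \|\tilde\tau(z)\|_h + c = d \log|z|^{-1} + O(1),$$
using continuity and non-vanishing of $\|\tilde\tau\|_h$ near $0$. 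The main obstacle will be ensuring Moriwaki's continuity result applies across the potentially singular central fibre $\X_0$, and verifying that the canonical trivialization on $\pr^1 \setminus \{0\}$ is genuinely compatible with the identifications of Deligne pairings used to define $f$; both should follow from the base-change compatibility and functoriality of the Deligne pairing construction combined with the invariance of Deligne pairings under resolution of indeterminacy.
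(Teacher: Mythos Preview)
Your proposal is correct and follows essentially the same approach as the paper: compactify to $\pr^1$, use the canonical trivialisation of the Deligne pairing over $\pr^1\setminus\{0\}$ coming from triviality of the family there, identify the order of vanishing of this section at $0$ with the degree $d=\L_0\cdots\L_n$, and conclude via Moriwaki's continuity of the Deligne metric. The paper phrases the trivialising section as the pullback $\sigma^*s$ along the dominant morphism $\sigma:\X\to X\times\pr^1$, but this is exactly your $\tau$ viewed through the base-change isomorphism.
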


\begin{proof}

    We extend the model to a model over $\pr^1$, using the same notation. By assumption, the model $\X$ is dominant, so we have  a morphism $\sigma:\X \to X\times \pr^1$ to the trivial model (as we may pass to a resolution of indeterminacy if not), along with an identification $\X_{\pr^1 -\{0\}} \cong X_{\pr^1 -\{0\}}$ compatible with the various line bundles. We thus obtain an identification $$\langle \L_0,\ldots,\L_n\rangle_{\X_{\pr^1 -\{0\}}/\pr^1 -\{0\}} \cong \langle L_0,\ldots,L_n\rangle_{X\times \pr^1 -\{0\}/\pr^1 -\{0\}},$$ where the latter is the trivial line bundle. 
    
    We choose a trivialising section $s$ of the Deligne pairing $\langle L_0,\ldots,L_n\rangle_{X\times \pr^1/\pr^1}$, chosen to be a constant section of the trivial bundle, so that the pullback $\sigma^*s$ is a trivialising section of $\langle \L_0,\ldots,\L_n\rangle_{\X_{\pr^1 -\{0\}}/\pr^1 -\{0\}} $ away from $\{0\}\in\pr^1$. By the argument of Lemma \ref{lem:slope}, the difference of the degrees of $\langle \L_0,\ldots,\L_n\rangle_{\X_{\pr^1}/\pr^1}$ and $\langle L_0,\ldots,L_n\rangle_{X\times \pr^1/\pr^1}$ is given by the smallest integer $m$ such that $z^m\sigma^*s$ is regular at the origin. Since the second of these Deligne pairings is trivial, this degree vanishes and so the integer $m$ equals the degree of $\langle \L_0,\ldots,\L_n\rangle_{\X_{\pr^1}/\pr^1}$, which is given in turn by the intersection number $\L_0\cdot\ldots\cdot \L_n$, by the general theory of Deligne pairings.
    
    The function $f: \C^*\to\R$, which computed from the quotient of the metrics, can be computed through the section $\sigma^*s$ as 
       \begin{align*}f(z)&=-\log|s(z)|_{\langle h_{0,z},\dots,h_{n,z}\rangle_{X/\{pt\}}}+\log|s(z)|_{\langle h_0^{\mathrm{ref}},\dots,h_n^{\mathrm{ref}}\rangle_{X/\{pt\}}},\\
    &= \log|z^{-m}|-\log|z^{-m}s(z)|_{\langle h_{0,z},\dots,h_{n,z}\rangle_{X/\{pt\}}}+\log|s(z)|_{\langle h_0^{\mathrm{ref}},\dots,h_n^{\mathrm{ref}}\rangle_{X/\{pt\}}}.
    \end{align*}

    The term $\log|s(z)|_{\langle h_0^{\mathrm{ref}},\dots,h_n^{\mathrm{ref}}\rangle_{X/\{pt\}}}$ is constant since $s(z)$ is the constant section of the trivial line bundle $\langle L_0,\ldots,L_n\rangle_{X\times \pr^1/\pr^1}$, while since $z^{-m}\sigma^*s(z)$ is a regular section which does not vanish at the origin, the term $\log|z^{-m}s(z)|_{\langle h_{0,z},\dots,h_{n,z}\rangle_{X/\{pt\}}}$ is bounded near zero. Thus \begin{align*}f(z) &= -m\log|z| + O(1), \\ &= (\L_0\cdot\ldots\cdot \L_n) \log|z|\mi + O(1) \end{align*} as required. \end{proof}

\begin{remark} The usage of Deligne pairings and metrics is not essential here, and it is also possible to argue more directly, following \cite{dervanross, zak:kahler}. \end{remark}

As mentioned at the beginning, the functionals of Section \ref{sec:functionals} can be interpreted through Deligne pairings; for example, the functional $E(\phi)$ is the Deligne metric associated to the Deligne pairing $\langle \L,\ldots,\L\rangle$. We denote these functionals, which now depend on $z\in \C$, by $E(z), R(z)$ and $J(z)$ respectively; these are associated to an arbitrary fixed relatively K\"ahler metric $\omega\in c_1(\L)$ and K\"ahler metric $\omega^{\mathrm{ref}}\in c_1(L)$ induced by metrics $h$ and $h^{\mathrm{ref}}$ on $\L$ and $L$ respectively). The following thus is an immediate consequence of Theorem \ref{thm:slope}.

\begin{corollary}\label{coro:slopes}
Let $(\cX,\cL)$ be a model of $(X,L)$. Then
\begin{enumerate}[(i)]
	\item $E(z)=\mE^\na(\cX,\cL)\log|z|\mi + O(1)$;	
	\item $R(z)=\mR^\na(\cX,\cL)\log|z|\mi + O(1)$;
	\item $J(z)=\|(\cX,\cL)\|\log|z|\mi + O(1)$.
\end{enumerate}
\end{corollary}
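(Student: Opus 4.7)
The strategy is to recognise each of $E(\phi_z)$, $R(\phi_z)$, and $J(\phi_z)$ as (up to normalisation) the logarithm of a ratio of two Deligne metrics on a Deligne pairing of line bundles on $\cX_\bbc$, and then invoke Theorem \ref{thm:slope}. Throughout I would pass to a resolution of indeterminacy ensuring $\cX$ is dominant via a morphism $\alpha:\cX\to X_\ring$; by the functoriality of the Deligne pairing this changes neither the intersection numbers nor the Deligne metrics at issue.

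For (i), I would work with the Deligne pairing $\langle\cL,\ldots,\cL\rangle_{\cX_\bbc/\bbc}$ built from $n+1$ copies of $\cL$, comparing the Deligne metric attached to $h$ (of curvature $\omega$, restricting on each fibre $X_z$ to $\omega_{\phi_z}$) with the one attached to the trivially extended reference metric $h^{\reff}$. Iterating the change-of-metric formula of Subsection \ref{subsect:deligne} a total of $n+1$ times produces, up to a bounded constant, the function $f_E(z)=\sum_{j=0}^{n}\int_{X_z}\phi_z\,\omega^j\wedge\omega_{\phi_z}^{n-j}=(n+1)L^n\cdot E(\phi_z)$; Theorem \ref{thm:slope} then evaluates its leading behaviour to $(\cL^{n+1})\log|z|\mi+O(1)$. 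Dividing by $(n+1)L^n$ yields (i).

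For (iii), the analogous procedure applied to the Deligne pairing $\langle\cL,L,\ldots,L\rangle$---one variable copy of $\cL$ and $n$ trivially extended copies of $L$ equipped with $h^{\reff}$---gives $\int_{X_z}\phi_z\,\omega^n=(\cL\cdot L^n)\log|z|\mi+O(1)$, and combining this with (i) and the definition $\|(\cX,\cL)\|=\frac{\cL\cdot L^n}{L^n}-\mE^\na(\cX,\cL)$ yields (iii) at once. For (ii), I would extend $K_X$ to the line bundle $\alpha^*K_X$ on $\cX$, equip it with the pullback of a reference metric of curvature $\Ric\omega$, and apply the same procedure to $\langle\cL,\ldots,\cL,\alpha^*K_X\rangle$ (with $n$ copies of $\cL$), varying only the metrics on the $\cL$-factors. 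The change-of-metric formula then produces $f_R(z)=L^n\cdot R(\phi_z)+O(1)$, while Theorem \ref{thm:slope} identifies its slope with $(\cL^n\cdot\alpha^*K_X)=(\cL^n\cdot K_X)$, the latter equality being precisely how $\mR^\na(\cX,\cL)$ is defined in Subsection \ref{subsect:22}. Dividing by $L^n$ proves (ii).

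I do not foresee a substantial obstacle: the argument is the orchestration of two ingredients already in place, namely Theorem \ref{thm:slope} and the change-of-metric formula for Deligne metrics. The only delicate point is checking that the Deligne metric arising from the metric $h$ on $\cL$ correctly captures the Archimedean functional $E(\phi_z)$ in a way compatible with the model structure on $\cX$, which is immediate once $\cX$ is chosen dominant and smooth; everything else is bookkeeping of intersection numbers matching the definitions of Subsection \ref{subsect:22}.
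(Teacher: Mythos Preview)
Your proposal is correct and follows exactly the approach the paper takes: the paper simply remarks that $E$, $R$, and $J$ are (normalised) Deligne metrics on the pairings $\langle\cL,\ldots,\cL\rangle$, $\langle\cL,\ldots,\cL,K_X\rangle$, and the appropriate combination for $J$, and declares the corollary an immediate consequence of Theorem~\ref{thm:slope}. You have merely spelled out the change-of-metric computation that the paper leaves implicit; one small slip is that the natural reference metric on $K_X$ has curvature $-\Ric\omega$ rather than $\Ric\omega$, but this does not affect the argument.
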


We also a slope formula for the reduced $J$-functional, extending work of Hisamoto \cite{hisamoto:toric} and Li \cite{chili:guniform} in the case of test configurations.

\begin{corollary}\label{coro:slopejred}
In the setting of Corollary \ref{coro:slopes}, we have that
$$J_{T}(z)=\|(\cX,\cL)\|_{T^{\C}}\log|z|\mi + O(1)$$
\end{corollary}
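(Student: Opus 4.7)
The plan is to adapt the finite-dimensional Proposition \ref{prop:slopeequivfindim} to the Kähler setting, using Corollary \ref{coro:slopes}(iii) --- the slope formula for the unreduced $J$-functional --- as the key input.

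For the upper bound, I would note that for each cocharacter $\lambda\in N_{\Z}$, the twisted model $(\cX_{\lambda},\cL_{\lambda})$ corresponds under the Deligne-pairing identification developed in Section \ref{subsect:deligne} to the one-parameter family of Kähler potentials $\lambda(z).\phi(z)$ obtained by the $T^{\C}$-action, since the twist merely modifies the identification over $\Spec\field$ by the automorphism $\phi_{\lambda}$. Applying Corollary \ref{coro:slopes}(iii) to $(\cX_{\lambda},\cL_{\lambda})$ then gives
$$J(\lambda(z).\phi(z))=\|(\cX_{\lambda},\cL_{\lambda})\|\log|z|\mi+O(1),$$
and since $J_T(\phi(z))\leq J(\lambda(z).\phi(z))$ by definition, taking $\limsup_{z\to 0}$ and then the infimum over $\lambda\in N_{\Z}$ yields the desired upper bound.

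For the lower bound, I would follow the pattern of Proposition \ref{prop:slopeequivfindim}. First, I would establish that $t\mapsto J(t.\phi(z))$ on $T^{\C}$ is proper modulo $T$, via strict convexity of $J$ along $T^{\C}$-geodesics in the space of Kähler potentials (equivalently, via Paul's Theorem \ref{thm:paul}, which identifies $J$ with a finite-dimensional log norm functional and reduces the properness to the finite-dimensional setting). This produces a minimiser $t_z\in T^{\C}$ achieving $J_T(\phi(z))$ at each $z$. Next, I would reparametrise $t_z=\lambda_z(z)$ with $\lambda_z\in N_{\R}$; the bound $J(\lambda_z(z).\phi(z))\leq J(\phi(z))=O(\log|z|\mi)$, combined with properness of $\lambda\mapsto \|(\cX_{\lambda},\cL_{\lambda})\|$ on $N_{\R}$, yields a uniform bound on $\lambda_z$, and I extract a subsequence $\lambda_{z_k}\to\lambda_\infty\in N_{\R}$. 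Finally, a continuity estimate of the form $|J(\lambda(z).\phi(z))-J(\lambda'(z).\phi(z))|\leq C|\lambda-\lambda'|\log|z|\mi+O(1)$, with $C$ independent of $z,\lambda,\lambda'$, combined with an extension of the slope formula of Corollary \ref{coro:slopes}(iii) to irrational cocharacters, gives
$$\liminf_{z\to 0}\frac{J_T(\phi(z))}{\log|z|\mi}\geq \|(\cX_{\lambda_\infty},\cL_{\lambda_\infty})\|\geq \|(\cX,\cL)\|_{T^{\C}},$$
completing the proof.

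The hard part will be the continuity estimate and the extension of the slope formula to irrational $\lambda_\infty\in N_{\R}\setminus N_{\Q}$, since the twist $(\cX_{\lambda_\infty},\cL_{\lambda_\infty})$ is not literally defined for $\lambda_\infty$ outside $N_{\Z}$. The cleanest route is to reduce to the finite-dimensional analogue of Proposition \ref{prop:slopeequivfindim} via Paul's Theorem \ref{thm:paul} applied to embeddings of growing degree, passing to the limit using Fubini--Study approximation; alternatively, one can develop a direct Deligne-pairing argument extending Theorem \ref{thm:slope} to families parametrised by $N_{\R}$ (with the norm $\|(\cX_{\lambda},\cL_{\lambda})\|$ interpreted as the natural piecewise-polynomial extension to $N_{\R}$), essentially the approach of Hisamoto \cite{hisamoto:toric} in the test configuration setting.
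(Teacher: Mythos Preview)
Your approach is correct in outline but takes an unnecessarily long route. You propose to re-prove the content of Proposition \ref{prop:slopeequivfindim} in the K\"ahler setting (properness, extracting a limiting twist $\lambda_\infty\in N_{\R}$, a continuity estimate, extending the slope formula to irrational twists), and you flag these last two steps as the hard part. The paper instead bypasses all of this by observing that Paul's Theorem \ref{thm:paul} identifies the K\"ahler $J$-functional along the arc with the finite-dimensional log norm functional of Section \ref{sec:pairs} for a \emph{single fixed} embedding (the one associated to the exponent of the model via Proposition \ref{prop:arcsvmodels}); after citing properness of that norm functional from \cite[Lemma 2.15]{chili:guniform}, Proposition \ref{prop:slopeequivfindim} applies as a black box, and Corollary \ref{coro:slopes}(iii) identifies the slopes of the twists with $\|(\cX_\lambda,\cL_\lambda)\|$.

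You do mention this reduction as the ``cleanest route'', but you over-complicate it: there is no need for embeddings of growing degree or Fubini--Study approximation. The model has some exponent $r$, hence is realised by an arc in $\GL(H^0(X,rL))$; the Fubini--Study metric restricted from this fixed embedding is itself a smooth relatively K\"ahler metric on $\cL$, and any two such metrics differ by a function bounded uniformly near the central fibre, which propagates (via the argument of Lemma \ref{lem:jtest}) to an $O(1)$ difference in $J_T(z)$. So the slope of $J_T$ for an arbitrary smooth metric equals that for the Fubini--Study one, and the latter is computed directly by Proposition \ref{prop:slopeequivfindim}. Your direct Deligne-pairing approach would also work, but all the ``hard parts'' you identify are already absorbed into that finite-dimensional proposition.
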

\begin{proof}
By Theorem \ref{thm:paul}, the $J$-functional is exactly of the form of the norm functional of Definition \ref{def:anal-poly}. By  \cite[Lemma 2.15]{chili:guniform}, the norm functional is proper in the sense of Definition \ref{def:proper}, so Proposition \ref{prop:slopeequivfindim} implies that the slope of $J_T$ is the infimimum of the slope of $J$ along all twists by one-parameter subgroups. The result then follows since this latter slope is $\|(\cX,\cL)\|$ by Corollary \ref{coro:slopes}.\end{proof}

We finally prove a formula the asymptotics of the entropy (and hence the Mabuchi functional), adapting work of Boucksom--Hisamoto--Jonsson for test configurations \cite[Theorem 3.6]{bhj:asymptotics}. The argument is in essence unchanged for models, and so we simply sketch their argument.

\begin{proposition}\label{prop:slopemabuchi}
	In the setting of Corollary \ref{coro:slopes}, we have that
\begin{enumerate}[(i)]
	\item $H(z)=\mH^\na(\cX,\cL)\log|z| + O(\log(-\log|z|))$;	
	\item $M(z)=\mM^\na(\cX,\cL)\log|z| + O(\log(-\log|z|))$. 
\end{enumerate}
\end{proposition}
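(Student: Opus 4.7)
The plan is to prove (i) first; (ii) then follows by combining (i) with the slope formulas for $E$ and $R$ established in Corollary \ref{coro:slopes}. Indeed, from the decomposition $M(\phi) = H(\phi) + R(\phi) + \tfrac{n}{n+1}\mu(X,L) E(\phi)$ and the corresponding decomposition of $\mM^\na(\cX,\cL)$, the slopes combine linearly; the $O(1)$ errors from $R$ and $E$ are absorbed into the coarser $O(\log(-\log|z|))$ error coming from the entropy.

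For (i), the strategy follows \cite[Theorem 3.6]{bhj:asymptotics}, which I would adapt from the test configuration setting to the model setting. The first step is to reinterpret the entropy $H(z)$ as a change-of-metric computation for a Deligne pairing involving relative log canonical bundles. Concretely, the volume forms $\omega_{\phi_z}^n$ and $\omega^n$ may be viewed as (possibly singular) positive Hermitian metrics on $-K_X$ restricted to $X_z$, and the entropy $H(z)$ is a difference of Deligne metric norms associated to $\langle \cL,\dots,\cL,-K_{\cX/X\times\pr^1}^{\log}\rangle$ and to the trivial model. A slope-of-Deligne-pairing computation, formally analogous to Theorem \ref{thm:slope}, then identifies the leading coefficient as $\tfrac{1}{L^n}\,\cL^n\cdot K_{\cX/X\times\pr^1}^{\log} = \mH^\na(\cX,\cL)$.

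The main obstacle is that the Deligne-pairing metric induced by the log relative canonical bundle is \emph{not} continuous near $z=0$: it acquires logarithmic singularities along the (non-reduced) central fibre, so the slope formula of Theorem \ref{thm:slope}, which is proved under smoothness hypotheses and yields an $O(1)$ error, does not apply verbatim. To control this, I would pass to a log resolution $\tilde\cX\to\cX$ of the pair $(\cX,\cX_{0,\red})$ and carry out an explicit integral estimate of the contribution of each exceptional divisor to the entropy near the central fibre. The weaker error term $O(\log(-\log|z|))$ (in place of $O(1)$) reflects the fact that the density of the log canonical metric against the Monge--Ampère measure is integrable but not bounded; the log-log factor ultimately emerges from integrals of the form $\int_{|z|<1}\bigl|\log|\sigma(z)|\bigr|\,d\mu_z$, where $\sigma$ is a local defining equation of an exceptional divisor and $\mu_z$ is a family of measures with uniformly bounded local mass.

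This last estimate is the technical heart of \cite[Theorem 3.6]{bhj:asymptotics} and adapts without essential change: one uses compactness of $X$ to reduce to a finite local computation on a log smooth model, then invokes the standard resolution-of-singularities-plus-coordinate-change estimate to bound the singular part of the entropy integral by $O(\log(-\log|z|))$ uniformly. Summing this against the smooth contributions (which are $O(1)$ by Theorem \ref{thm:slope} applied to the smooth pieces of the Deligne pairing) yields the claimed asymptotic.
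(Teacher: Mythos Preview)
Your proposal is correct and follows essentially the same approach as the paper: reduce (ii) to (i) via Corollary \ref{coro:slopes}, pass to a log resolution so that $K^{\log}_{\cX/\C}$ is a line bundle, use Theorem \ref{thm:slope} to extract the leading slope from the Deligne pairing with a smooth auxiliary metric on the log relative canonical bundle, and then control the difference between that smooth metric and the singular volume-form metric $(\omega_z^n)^*$ by the $O(\log(-\log|z|))$ estimates of \cite[Theorem 3.6, Lemma 3.10]{bhj:asymptotics} (with one direction invoking \cite{bj:trop}). The paper's own proof is presented as a sketch with exactly this structure, so your outline matches it closely.
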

\begin{proof}
    By Corollary \ref{coro:slopes}, the analogous holds for the $E$ and $R$ terms in $M$, so it suffices to prove (i).
    
    By passing to a (log) resolution of indeterminacy (which leaves $\mM\NA$ unchanged), we may assume $\X$ is smooth and $\X_0$ has simple normal crossings support.    In particular $K^\mathrm{log}_{\cX/\C}$ is a line bundle, so we may fix a smooth metric $h^{\mathrm{can}}$ on it, along with a reference metric $h^{\mathrm{can}, \mathrm{ref}}$ on $K_X$.  Theorem \ref{thm:slope} implies $$\langle h^{\mathrm{can}}_z,h_z,\ldots,h_z\rangle_{X/\{pt\}}-\langle h^{\mathrm{can}, \mathrm{ref}}, h^{ \mathrm{ref}}\dots, h^{ \mathrm{ref}}\rangle_{X/\{pt\}}=H^\na(\cX,\cL)\log|z|\mi+O(1),$$ so it suffices to show that
    $$\left\langle (\omega_z^n)^*,h_z,\ldots,h_z\right\rangle_{X/\{pt\}}-\langle h^{\mathrm{can}}_z,h_z,\ldots,h_z\rangle_{X/\{pt\}}=O(\log(-\log |z|)),$$ where $\omega_z^n$ is the metric on $-K_X$ induced by the curvature $\omega_z$ of $h_z$ and $(\omega_z^n)^*$ is the dual metric.

    One estimate follows from writing the left-hand side as (viewing $(h^{\mathrm{can}, \mathrm{ref}})^*$ as inducing a volume form)
    $$V\mi \int_X \log\frac{\omega_z^n}{(h^{\mathrm{can}, \mathrm{ref}})^*}\omega_z^n\geq -\log \int_X (h^{\mathrm{can}, \mathrm{ref}})^*$$
    which is $O(\log(-\log|z|^d))$ for some $0\leq d\leq n$ by the main result of \cite{bj:trop}. For the reverse estimate, it suffices to prove that the function
    $ \frac{\omega_z^n}{(h^{\mathrm{can}, \mathrm{ref}})^*}$
    is uniformly bounded above. For the reverse estimate, write the (simple normal crossings) central fibre of $\X$ as $\cX_0=\sum a_i E_i$ for $a_i \in \Z_{>0}$. We may then introduce local coordinates $(w_i)_i$ near any point in the central fibre, such that $\Pi_i w_i^{a_i}=\varepsilon z$. The result then amounts to establishing the inequality of $(n,n)$-forms $$i^n dw_0\wedge d\bar w_0\wedge\dots\wedge\widehat{dw_i\wedge d\overline{w}_i}\wedge \dots\wedge dw_n\wedge d\bar w_n|_{\cX_z}\leq C(h^{\mathrm{can}}_z)^*$$
    for all $i\leq p$ ($p$ being maximal index of the $E_i$) and some positive constant $C$ independent of $i$ and $z$. This then follows from a computation in logarithmic polar coordinates exactly as in \cite[Lemma 3.10]{bhj:asymptotics}.
\end{proof}

\subsection{Proof of the main result} We are now in a position to prove our main result. Recall we have a smooth polarised variety $(X,L)$ with $T^{\C}$ a maximal torus of automorphisms in the reductive part of $\Aut(X,L)$, and denote $\H^H_r$ the space of $H$-invariant Fubini--Study metrics induced by embeddings in $\pr(H^0(X,rL))$. The following proves Theorem \ref{intromainthm}.

\begin{theorem}\label{thm:polyeq}
Fix $r>0$ such that $rL$ is very ample. The following are equivalent:
\begin{enumerate}[(i)]
	\item there exist $C,\varepsilon>0$ such that $M(\phi)\geq \varepsilon J_{T}(\phi)-\delta_r$ for all $\phi\in\cH^H_r$;
	\item there exists $\varepsilon>0$ such that $\mM^\na(\cX,\cL)\geq \varepsilon \|(\cX,\cL)\|_{T^{\C}}$ for all $H^{\C}$-equivariant models $(\cX,\cL)$ of $(X,L)$ (of exponent $r$);
	\item the pair $[v,w]$ associated to $X$ is polystable.
\end{enumerate}
\end{theorem}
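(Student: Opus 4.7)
The plan is to reduce this three-way equivalence to the main result on stability of pairs from Section \ref{sec:pairs}, by identifying both conditions (i) and (ii) with, respectively, the analytic and numerical polystability of the pair $[v,w] = [R(X), \Delta(X)]$ (taken in suitable tensor powers to balance the degrees $d = \deg R(X) = (rL)^n$ and $d' = \deg\Delta(X)$). Since the equivalence of polystability, analytic polystability and numerical polystability for such a pair has already been established (the polystable analogue of Corollary \ref{cor:body-HNKM}), the task is to translate (i) and (ii) into these intrinsic conditions on the pair. Properness of the norm functional, required to invoke the polystable results, holds by \cite[Lemma 2.15]{chili:guniform}, as already used in Corollary \ref{coro:slopejred}.

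For (i) $\Leftrightarrow$ (iii), the key ingredient is Paul's Theorem \ref{thm:paul}, which expresses both the Mabuchi and the $J$-functional on $\cH_r^H$ as genuine log norm functionals of the pair (up to an overall factor of $L^n$ absorbed by appropriate tensor powers). Under this translation, the coercivity inequality $M(\phi)\geq\varepsilon J_T(\phi)-\delta_r$ on the Fubini--Study space becomes exactly the inequality appearing in Definition \ref{def:anal-poly} of analytic polystability for the pair, with the reduced $J$-functional playing the role of the $T^{\C}$-infimum on the right-hand side. The equivalence with polystability is then immediate from the pairs theorem.

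For (ii) $\Leftrightarrow$ (iii), we combine the arc/model dictionary with the slope formul\ae. By the equivariant version of Proposition \ref{prop:arcsvmodels}, $H^{\C}$-equivariant models of exponent $r$ correspond bijectively to arcs $\rho$ in the centraliser $G^H\subset\GL(H^0(X,rL))$. Along such a $\rho$, Proposition \ref{prop:slopemabuchi} yields
\[
M(\rho(z))=\cM\NA(\cX,\cL)\,\log|z|\mi+O(\log(-\log|z|)),
\]
and Corollary \ref{coro:slopejred} yields the analogous formula for $J_T$ with slope $\|(\cX,\cL)\|_{T^{\C}}$. On the pair side, Paul's theorem together with Lemma \ref{lem:slope} identifies the slope of $M(\rho(z))$ with a constant multiple of the weight $\nu(\rho,[v,w])$, and Proposition \ref{prop:slopeequivfindim} identifies the slope of $J_T(\rho(z))$ with $\|(\rho,[v])\|_{T^{\C}}$. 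Matching the two descriptions of the slopes transfers $\cM\NA$ and $\|\cdot\|_{T^{\C}}$ on models into the weight and reduced norm of the corresponding arc acting on the pair. Uniform K-polystability with respect to models thus becomes numerical polystability, and Theorem \ref{polystable-HM} finishes the proof.

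The main obstacle lies in the careful bookkeeping of normalisation constants. The Chow and discriminant points live in representations whose degrees $d,d'$ differ, so writing $(L^n)M$ as a genuine log norm $\log|g.v|-\log|g.w|$ of a pair forces one to replace $[R(X),\Delta(X)]$ by the balanced tensor power $[R(X)^{\otimes d'},\Delta(X)^{\otimes d}]$, and similarly on the $J$-side. These scalings must be tracked consistently between the analytic coercivity inequality, the slope formul\ae\ and the inequality defining numerical polystability. A further, minor point is that numerical polystability demands ratios of the form $(k+1)\mi$ whereas (ii) asks for an arbitrary $\varepsilon>0$; this is handled by absorbing the discrepancy into a choice of tensor power, noting that enlarging $k$ only worsens the additive constant $\delta_r$ and does not affect the qualitative statement.
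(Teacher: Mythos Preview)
Your proposal is correct and follows essentially the same route as the paper: translate (i) into analytic polystability of the pair $[R(X),\Delta(X)]$ via Paul's Theorem \ref{thm:paul}, translate (ii) into numerical polystability via the arc/model correspondence (Proposition \ref{prop:arcsvmodels}) together with the slope formul\ae\ (Lemma \ref{lem:slope}, Proposition \ref{prop:slopemabuchi}, Corollary \ref{coro:slopejred}, Proposition \ref{prop:slopeequivfindim}), and then invoke the polystable equivalence theorem of Section \ref{sec:pairs}. If anything, you are more scrupulous than the paper about the degree-balancing tensor powers and the passage from arbitrary $\varepsilon>0$ to $(k+1)^{-1}$, points which the paper's proof leaves implicit.
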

\begin{proof}
It follows from Theorem \ref{thm:paul}, due to Paul, that the Mabuchi functional can be expressed within the theory of stability of pairs, where the pair $[R(X),\Delta(X)]$ consists of the Chow point and the discriminant point. Similarly, by Corollary \ref{coro:slopejred}, the reduced $J$-functional is the norm functional involved in the setup. By Theorem \ref{polystable-HM}, it follows that $(i)$ is equivalent to polystability of the pair $[R(X),\Delta(X)]$, which in turn is equivalent (by the same result) to numerical polystability of the pair. 

The arcs involved in numerical polystability then correspond to models of exponent $r$, by Proposition \ref{prop:arcsvmodels}, and the various numerical invariants can be computed by restricting the Fubini--Study metric from the overlying projective space. A combination of Proposition \ref{lem:slope}, Proposition \ref{prop:slopemabuchi} and  Corollary \ref{coro:slopejred}, imply that the weight relevant to numerical polystability is $\mM^\na(\cX,\cL)- \varepsilon \|(\cX,\cL)\|_{T^{\C}}$. Thus this latter condition, over all models of exponent $r$, is equivalent to coercivity on $\phi\in\cH^H_r$. \end{proof}

Corollary \ref{coro:intro1} is a consequence.

\begin{corollary}\label{body:cc-cor}
 If the class $c_1(L)$ admits a cscK metric, then $(X,L)$ is uniformly K-polystable with respect to arcs.
\end{corollary}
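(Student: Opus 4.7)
The plan is to combine the deep analytic coercivity result of Berman--Darvas--Lu with Theorem \ref{thm:polyeq}, which was just established. Concretely, I would first invoke \cite{BDL:aens} to extract from the hypothesis that $c_1(L)$ admits a cscK metric the existence of constants $\varepsilon,\delta>0$ such that
\begin{equation*}
M(\phi)\geq \varepsilon\, J_{T}(\phi)-\delta \quad\text{for all }\phi\in\H^H.
\end{equation*}
The key observation is that $\varepsilon$ here is a single constant, uniform over the entire space $\H^H$ of $H$-invariant potentials, and in particular independent of any auxiliary choice of embedding into projective space.

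Next, for each integer $r$ with $rL$ very ample, I would exploit the inclusion $\H^H_r\subset \H^H$ to conclude that the same coercivity inequality, with the same $\varepsilon$ (and some possibly $r$-dependent constant $\delta_r$, which is irrelevant for the final statement) holds on $\H^H_r$. Applying the equivalence (i)$\Leftrightarrow$(ii) of Theorem \ref{thm:polyeq} for this value of $r$, this coercivity on $\H^H_r$ translates to
\begin{equation*}
\mM^\na(\cX,\cL)\geq \varepsilon \,\|(\cX,\cL)\|_{T^{\C}}
\end{equation*}
for every $H^{\C}$-equivariant model $(\cX,\cL)$ of $(X,L)$ of exponent $r$.

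Since every model has some exponent $r\gg 0$, and the constant $\varepsilon$ above is the same for every $r$, ranging over all $r$ yields uniform K-polystability with respect to arcs, formulated through the non-Archimedean Mabuchi functional $\mM^\na$. To recast this in terms of the Donaldson--Futaki invariant as required by the definition in Section \ref{sec2}, I would invoke Remark \ref{DFMNA}, which guarantees that the two formulations coincide after a base-change argument.

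The proof is therefore short once Theorem \ref{thm:polyeq} and the slope formulas of Section \ref{subsect:deligne} are in place; the only point requiring care is the uniformity of the constant $\varepsilon$ as $r$ varies. This is automatic here because $\varepsilon$ originates from a single global coercivity statement on $\H^H$, so the same value propagates through the restriction to each $\H^H_r$ and then through the equivalence of Theorem \ref{thm:polyeq}. This uniformity across exponents is precisely what distinguishes \emph{uniform} K-polystability with respect to arcs from the fixed-exponent version that Theorem \ref{thm:polyeq} addresses directly.
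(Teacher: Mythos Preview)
Your proposal is correct and follows essentially the same route as the paper's own proof: invoke Berman--Darvas--Lu to obtain coercivity on $\H^H$ with a single $\varepsilon$, restrict to each $\H^H_r$, apply Theorem \ref{thm:polyeq} to pass to models of exponent $r$, and finally use Remark \ref{DFMNA} to convert the $\mM^\na$ inequality into the Donaldson--Futaki formulation. Your emphasis on the uniformity of $\varepsilon$ across exponents is exactly the point the paper relies on implicitly.
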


\begin{proof}
By Berman--Darvas--Lu \cite{BDL:aens}, the existence of a cscK metric implies that the Mabuchi functional is coercive: there exists $\epsilon, \delta>0$ such that for all $\phi \in \H^H$ $$M(\phi) \geq \epsilon J_T(\phi) - \delta.$$ In particular, the Mabuchi functional is coercive on each space $\H^H_r$. We may view an  arbitrary $H^{\C}$-equivariant model as induced by an arc in $\GL(H^0(X,rL))^{H^{\C}}$ for some $r>0$ by Proposition \ref{prop:arcsvmodels},  so by Theorem \ref{thm:polyeq} we have $$\mM^\na(\cX,\cL)\geq \varepsilon \|(\cX,\cL)\|_{T^\C}.$$ This condition over all exponents $r>0$ is equivalent to uniform K-polystability with respect to arcs (which by definition is measured through the Donaldson--Futaki invariant) by Remark \ref{DFMNA},  proving the result. 
\end{proof}

\section{Structure of the automorphism group}
\subsection{The Donaldson--Futaki invariant in the singular case}
The aim of this section is to prove that uniform K-polystability with respect to arcs of a polarised variety restricts the structure of its automorphism group. To treat the case of singular varieties, we will require another perspective on the Donaldson--Futaki invariant of an arc, closer to the original approach of Donaldson \cite{donaldson:bstab}.

Let $(X,L)$ be a polarised variety, not necessarily irreducible. Fix a model $\pi: (\X,\L)\to\pr^1$ for $(X,L)$, which we have considered as a family over $\pr^1$ by the gluing argument of Example \ref{ex:different-models}. We assume $\L$ is a line bundle (rather than a $\Q$-line bundle) by scaling if necessary; the argument adapts in the obvious manner in general.  

By flatness of $\pi: \X\to\pr^1$ we obtain for $r\gg 0$ a sequence of vector bundles $\pi_*(r\L)$ over $\pr^1$. The Knudsen--Mumford theorem \cite{knudsenmumford} produces an expansion of their determinants $$\det \pi_*(r\L) = \lambda_0^{\otimes {r \choose n+1}} \otimes  \lambda_1^{\otimes {r \choose n}}\otimes\ldots,$$ for line bundles $\lambda_0,\lambda_1$. Write in addition the Hilbert polynomial of $(X,L)$ as $$h^0(X,rL) = a_0r^n+a_1r^{n-1}+\ldots,$$ for $a_0,a_1 \in \Q$.

\begin{lemma}\label{KM-use}
If $X$ is normal, the Donaldson--Futaki invariant satisfies  $$\DF(\X,\L) =\frac{2\deg\lambda_0 a_1 - (n+1)\deg \lambda_1 a_0}{4a_0^2(n+1)!}.$$
\end{lemma}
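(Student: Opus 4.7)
The approach is via Grothendieck--Riemann--Roch applied to $\pi: \X \to \pr^1$, expanding the resulting polynomial in $r$ in two ways and comparing with the Hilbert--Riemann--Roch expansion of $h^0(X,rL)$. This mirrors the classical identification of weight polynomial coefficients with intersection numbers in Donaldson--Futaki theory.

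First, I would apply GRR (after resolving $\X$ if necessary, which leaves both sides of the claimed formula invariant) to compute the degree of $\det \pi_*(r\L)$, noting that $R^i\pi_*(r\L) = 0$ for $i>0$ and $r \gg 0$. Extracting the codimension-$(n+1)$ part of $\ch(r\L) \cdot \Td(T_{\X/\pr^1})$ and pushing forward yields
\begin{equation*}
\deg \det\pi_*(r\L) = \frac{r^{n+1}}{(n+1)!}\L^{n+1} - \frac{r^n}{2\cdot n!}\L^n \cdot K_{\X/\pr^1} + O(r^{n-1}).
\end{equation*}
Next I would expand the Knudsen--Mumford decomposition using the binomial identities $\binom{r}{n+1} = \frac{r^{n+1}}{(n+1)!} - \frac{n}{2\cdot n!}r^n + O(r^{n-1})$ and $\binom{r}{n} = \frac{r^n}{n!} + O(r^{n-1})$, and match coefficients of $r^{n+1}$ and $r^n$ to obtain
\begin{equation*}
\deg \lambda_0 = \L^{n+1}, \qquad \L^n \cdot K_{\X/\pr^1} = n\,\deg \lambda_0 - 2\,\deg \lambda_1.
\end{equation*}

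The corresponding Riemann--Roch computation on the normal variety $X$ gives $L^n = n!\, a_0$ and $-K_X \cdot L^{n-1} = 2(n-1)!\, a_1$, so that $\mu(X,L) = 2a_1/(na_0)$. Substituting these four identities into
\begin{equation*}
\DF(\X,\L) = \frac{1}{L^n}\left(\frac{n}{n+1}\mu(X,L)\L^{n+1} + \L^n \cdot K_{\X/\pr^1}\right)
\end{equation*}
and simplifying algebraically yields the claimed formula.

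The main obstacle is singularity handling. Since $\X$ need not be smooth even when $X$ is normal, GRR does not apply to $\X$ directly: one passes to a resolution $\sigma: \tilde \X \to \X$, and must verify that both the Knudsen--Mumford line bundles and the intersection number $\L^n \cdot K_{\X/\pr^1}$---defined through the normalisation as in Section \ref{subsect:22}---agree with their counterparts on $\tilde \X$. For the determinants this follows because the higher direct images $R^i\sigma_*\scO_{\tilde\X}$ are torsion, so contribute trivially to $\det(\pi\circ\sigma)_*(r\sigma^*\L)$ for $r \gg 0$ via the projection formula; for the intersection numbers, this is precisely the invariance under resolution of indeterminacy built into the definitions of Section \ref{subsect:22}. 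Once this compatibility is in hand, the argument is a direct comparison of polynomial expansions.
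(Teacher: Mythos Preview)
Your approach is precisely the one the paper takes: the paper's entire proof is the single sentence ``This is a statement about the asymptotics of the degree of the vector bundle $\pi_*(r\L)$ over $\pr^1$, and hence follows from Grothendieck--Riemann--Roch.'' You have supplied the details the paper omits---the GRR expansion, the binomial expansion of the Knudsen--Mumford side, the matching of coefficients, and the singularity handling via resolution---so your proposal is a faithful (and considerably more explicit) unpacking of the same argument.
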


\begin{proof} This is a statement about the asymptotics of the degree of the vector bundle $\pi_*(r\L)$ over $\pr^1$, and hence follows from Grothendieck--Riemann--Roch. \end{proof}

We take this as our definition of the Donaldson--Futaki invariant when $X$ is not normal, and use this interpretation throughout the present section.  We use this interpretation to prove the following.

\begin{lemma}\label{inversion-lemma}
Let $\rho$ be an arc in $\Aut(X,L)$. Then $\DF(\rho^{-1}) = -\DF(\rho),$ where $\DF(\rho)$ denotes the Donaldson--Futaki invariant of the associated model.
\end{lemma}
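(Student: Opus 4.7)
The plan is to combine Lemma \ref{KM-use} with a direct computation of the Knudsen--Mumford line bundles associated to an automorphism arc. The key point is that, for an arc $\rho \in \Aut(X,L)(\field)$, the associated family over $\pr^1$ is locally trivial but globally twisted by $\rho$, and this makes the determinant of the pushforward transparent.

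More precisely, I would first observe that the image of $[X_\field]\in \Hilb_{\pr^N}(\field)$ under the action of $\rho\in \Aut(X,L)(\field)\subset G(\field)$ is again $[X_\field]$, so by Proposition \ref{prop:arcsvmodels} the model $(\X_\ring[\rho],\L_\ring[\rho])$ has underlying scheme $X\times\Spec\ring$. Upon $\pr^1$-compactification via Example \ref{ex:different-models}, the resulting polarised family $(\X_{\pr^1}[\rho],\L_{\pr^1}[\rho])$ is locally trivial on a neighbourhood $U_0$ of $0$ and a neighbourhood $U_\infty$ of $\infty$, glued on the overlap $U_0\cap U_\infty\subset\C^*$ by the automorphism arc $\rho$ itself. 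One needs to carefully unwind the identifications to confirm this, which is the main piece of bookkeeping in the proof.

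Next, for $r$ large enough that $r\L$ is relatively very ample with vanishing higher cohomology, $\pi_*(r\L_{\pr^1}[\rho])$ is a vector bundle of rank $h^0(X,rL)$, trivialised as $H^0(X,rL)\otimes \cO_{U_0}$ and $H^0(X,rL)\otimes \cO_{U_\infty}$ on the two patches, with transition on the overlap given by the induced arc $\rho_r\in \GL(H^0(X,rL))(\field)$. Taking determinants, $\det\pi_*(r\L_{\pr^1}[\rho])$ is a line bundle on $\pr^1$ whose transition is $\det\rho_r\in \C[z,z^{-1}]^\times=\C^\times\cdot z^{\Z}$, and so its degree is the $z$-exponent $w(\rho,r)\in\Z$ of this unit. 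Under $\rho\mapsto\rho^{-1}$ we have $\rho_r\mapsto \rho_r^{-1}$, whence $\det\rho_r\mapsto(\det\rho_r)^{-1}$, and therefore $w(\rho^{-1},r)=-w(\rho,r)$ for every $r$ large.

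Finally, applying the Knudsen--Mumford expansion $\det\pi_*(r\L)=\lambda_0^{\otimes\binom{r}{n+1}}\otimes\lambda_1^{\otimes\binom{r}{n}}\otimes\cdots$, taking degrees and equating polynomials in $r$, we conclude $\deg\lambda_i[\rho^{-1}]=-\deg\lambda_i[\rho]$ for every $i\geq 0$. Since the formula of Lemma \ref{KM-use} expresses $\DF$ as a linear combination of $\deg\lambda_0$ and $\deg\lambda_1$ with coefficients depending only on the Hilbert polynomial of $(X,L)$, the negation of these degrees yields $\DF(\rho^{-1})=-\DF(\rho)$. The main obstacle is the first step: pinning down precisely that the transition function of the pushforward sheaf on $\pr^1$ is $\rho_r$ (or its inverse, which makes no difference to the determinant argument).
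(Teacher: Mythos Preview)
Your argument is correct and reaches the same conclusion through a route that is closely related to, but more direct than, the paper's. Both proofs recognise that the model attached to $\rho\in\Aut(X,L)(\field)$ has underlying scheme $X_\ring$, reduce to showing that the degrees of the Knudsen--Mumford line bundles change sign under $\rho\mapsto\rho^{-1}$, and then invoke Lemma~\ref{KM-use}. The difference lies in how that sign change is established. The paper passes through the Hilbert scheme: it fixes a very ample $G$-linearised line bundle on $\Hilb$, observes that $\rho$ fixes the Hilbert point $[v]$ and hence acts on the lift $v$ by a scalar $c(z)\in\field^\times$, so that the arc weight is $\ord_0(c)$ and manifestly negates under inversion; it then writes $\lambda_0,\lambda_1$ as differences of ample classes on $\Hilb$ to transfer the conclusion. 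You instead compute $\deg\det\pi_*(r\L_{\pr^1}[\rho])$ directly from the transition cocycle $\rho_r$ of the vector bundle on $\pr^1$, which is more elementary and avoids the Hilbert scheme entirely. One small inaccuracy: $\det\rho_r$ lies in $\field^\times$, not in $\C[z,z^{-1}]^\times$, since $\rho$ is only a formal arc; but this is harmless, as the degree of the glued line bundle depends only on $\ord_0(\det\rho_r)$, which still negates under inversion.
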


\begin{proof}

Embed $X$ into projective space $\pr(H^0(X,rL))$ for $r\gg 0$, so that $\rho$ is also an arc in $\GL(H^0(X,rL))$, and denote by $[v]$ the Hilbert point of $X$, namely the associated point in the Hilbert scheme $\Hilb$, by the Knudsen--Mumford theorem we obtain a sequence of line bundles $\lambda_j$ on $\Hilb$, defined through the relatively ample line bundle $\L$ on the universal family over $\Hilb$. 

Viewing $[v]$ as inducing a point $v$ in projective space through any very ample line bundle over $\Hilb$ with a $\GL(H^0(X,rL))$-linearisation, since $\rho(z)$ fixes $[v]$ (as $\Aut(X,L)$ is the stabiliser of $[v]$), there exists a Laurent series $c(z)\in \field$ such that for all $z$,
$$\rho(z).v = c(z).v,$$
or equivalently $\rho.(v)_\field= c\cdot (v)_\field$. The weight associated to this ample line bundle thus satisfies by definition $\nu(\rho,[v])=\ord_0(c)$, the order of pole of the Laurent series $c$. Then $$v = \rho^{-1}(z).\rho(z).v = \rho^{-1}(z)c(z).v = c(z) \rho^{-1}(z).v,$$ so $\rho^{-1}(z).v = c^{-1}(z).v$. This means that the weight of $\rho$ on $v$ is minus the weight of $\rho^{-1}$ on $v$, with respect to this very ample line bundle. 

We may write the line bundles $\lambda_0, \lambda_1$ as differences of ample line bundles, so that the weight satisfies the same property. It follows that $\DF(\rho^{-1}) = -\DF(\rho)$ by Lemma \ref{KM-use}.\end{proof}

This result is classical for one-parameter subgroups $\lambda\hookrightarrow \Aut(X,L)$. Note that this result also implies the analogous claim for $\mM\NA(\X,\L)$, since for a product arc the central fibre $\X_0 = X$ is reduced.

\subsection{Automorphisms and arcs}\label{sec:aut-arcs}

We aim to understand the structure of the identity component $\Aut_0(X,L)$ of the automorphism group $\Aut(X,L)$ of $(X,L)$ in relation to K-stability. We begin with basic facts.

If $\lambda: \C^*\hookrightarrow \Aut(X,L)$ is a one-parameter subgroup, we obtain a product test configuration by considering $(\X,\L) = (X\times \C^*,L)$ with $\C^*$-action given by $(x,z) \to (\lambda(t).x,tz)$. This induces an arc by restricting to $\Spec \ring$, the formal neighbourhood of the origin in $\C$, where the identification $(\X_{\field},\L_{\field}) \cong (X_{\field},L_{\field})$ is induced by the inverse of the $\C^*$-action, as discussed in Section \ref{sect:normsaut}.

Suppose instead that $\rho_+: (\C,+)\hookrightarrow \Aut_0(X,L)$ is a subgroup isomorphic to the additive group $(\C,+)$, and consider the action of the latter on $\pr^1$ given for $s\in \C$ in chosen coordinates by $$[z_0:z_1] \to s.z:=[z_0+sz_1:z_1];$$ when $z_0\neq 0$ we have $$[z_0+sz_1:z_1] = \left[1:\frac{z_1}{z_0+sz_1}\right].$$ This action has unique fixed point at infinity, $[1:0] \in \pr^1$. The product action of $(\C,+)$ on $(X\times\pr^1,L)$ given by $(x,z) \to (\rho_+(s).x,s.z)$ has a fixed fibre over $[1:0]\in\pr^1$, and taking a formal neighbourhood of $[1:0]$ produces a model $(\X,\L)$ of $(X,L)$. Explicitly, the identification $(\X_{\field},\L_{\field}) \cong (X_{\field},L_{\field})$ is given by restricting the identification (on the locus of $\pr^1$ where $z_0\neq 0$) of $X\times\C\cong X\times \C$ defined by $$(x,z) \to \left(\rho_+(s^{-1}).x, \frac{z}{1+s^{-1}z}\right);$$ the inverse is so that the interesting behaviour is as $s\to 0$ rather than $s\to \infty$. This is simply a more explicit version of the model induced by $\rho_+$, viewed as an arc in $\Aut(X,L) \subset \GL(H^0(X,rL))$.

\subsection{Uniform K-stability implies finite automorphisms}

Consider now a polarised variety $(X,L)$, which we allow to be singular.

\begin{theorem}\label{thm:nounipotent}
If $(X,L)$  is uniformly K-stable with respect to arcs, then $\Aut(X,L)$ is finite.
\end{theorem}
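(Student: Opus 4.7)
The plan is to argue by contradiction. Assume $\Aut(X,L)$ is infinite, so that $\Aut_0(X,L)$ is a nontrivial connected linear algebraic group, of the form $H^{\C}\ltimes U$ with $H^{\C}$ reductive and $U$ unipotent. Since at least one factor is nontrivial, one obtains either a nontrivial one-parameter subgroup $\lambda:\C^*\hookrightarrow H^{\C}$ or a nontrivial unipotent subgroup $\rho_+:(\C,+)\hookrightarrow U$. In either case the construction of Section \ref{sec:aut-arcs}, applied to the embedding $\Aut(X,L)\subset G=\GL(H^0(X,rL))$ for $r\gg 0$, produces an arc $\rho\in G(\field)$ taking values in $\Aut(X,L)$, together with its inverse $\rho^{-1}$.

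The central algebraic input is the inversion identity of Lemma \ref{inversion-lemma}, $\DF(\rho^{-1})=-\DF(\rho)$, which holds precisely because $\rho$ stabilises the Hilbert point of $X$. Applying uniform K-stability simultaneously to $\rho$ and $\rho^{-1}$ gives
$$\DF(\rho)\geq\epsilon\|(\X_\rho,\L_\rho)\|\geq 0\quad\text{and}\quad -\DF(\rho)=\DF(\rho^{-1})\geq\epsilon\|(\X_{\rho^{-1}},\L_{\rho^{-1}})\|\geq 0,$$
so both Donaldson--Futaki invariants and both norms must vanish. The remaining task is to contradict nontriviality of the chosen subgroup from this vanishing.

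The main obstacle, as the theorem's emphasis on unipotents suggests, is the unipotent case. For an arc $\rho$ coming from a nontrivial $\lambda$, I would expect that the nontrivial $\C^*$-weights on $H^0(X,rL)$ force, for a well-chosen linearisation of $rL$, a nonzero class $\L-L$ on the $\pr^1$-compactification and hence a positive norm, yielding the desired contradiction. In the unipotent case this direct route is blocked: unipotent groups admit no nontrivial characters, and the obvious first-order numerical invariants attached to $\rho_+$ vanish from the start, so the vanishing of $\DF$ and of the norm produced by the inversion trick carries no information on its own. My plan there is instead to exploit the nontrivial nilpotent operator $N\in\End(H^0(X,rL))$ induced by $\rho_+$, which produces a genuinely nontrivial $\ring$-submodule of $H^0(X_\field,rL_\field)$ in the sense of Remark \ref{rem:filtr}; twisting $\rho$ by a suitable auxiliary arc in $G$ lying outside $\Aut(X,L)$ should then yield a new arc whose Donaldson--Futaki invariant is strictly negative, violating uniform K-stability.

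As a cross-check in the smooth case, one can also route through Paul's theory: uniform K-stability with respect to arcs implies, via the stable case of Corollary \ref{coro:stability} together with Proposition \ref{prop:arcsvmodels}, that the pair $[R(X),\Delta(X)]$ is stable in the sense of Paul, and stable pairs have finite automorphism group by \cite[Proposition 4.10]{paul-stablepairs-13}. The direct arc-theoretic strategy above, which does not pass through the Chow or discriminant points, is what is required to reach the singular setting addressed by the theorem.
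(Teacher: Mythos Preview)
Your overall strategy---use the inversion identity to force $\|\rho\|=0$ for any arc in $\Aut(X,L)$, then reduce to ruling out $\C^*$- and $(\C,+)$-subgroups---matches the paper exactly, and your treatment of the $\C^*$ case is essentially the paper's (product test configurations have positive norm). The gap is in the unipotent case.

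You assert that for $\rho_+$ ``the obvious first-order numerical invariants vanish from the start'', so that the forced equality $\|\rho_+\|=0$ carries no information, and you then propose an auxiliary twisting argument to manufacture a negative Donaldson--Futaki invariant. This is based on a misidentification of the norm. Recall that in the pairs framework
\[
\|(\rho_+,[v])\|=\nu\bigl(\rho_+,[v,e^{\otimes\deg V}]\bigr)
=\bigl(\text{weight on }e^{\otimes\deg V}\bigr)-\bigl(\text{weight on the Chow point }v\bigr).
\]
You are right that the second term vanishes: since $\rho_+$ fixes $[v]$ and $(\C,+)$ has no nontrivial characters, $\rho_+(z).v=v$ and the Chow weight is zero. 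But the first term does \emph{not} vanish. It is computed from the growth of the matrix norm of $\rho_+(s)$ in $\GL(H^0(X,rL))\subset\C^{m\times m}$, and for a nontrivial unipotent one-parameter group this is governed (in a suitable $2\times 2$ block) by
\[
\log\left|\begin{pmatrix}1 & s^{-1}\\ 0 & 1\end{pmatrix}\right|=\log|s|^{-1}+O(1),
\]
so the weight on $e$ is strictly positive. Hence $\|\rho_+\|>0$ directly, which already contradicts the vanishing forced by the inversion trick. No auxiliary twisting is needed, and indeed your proposed twist by an arc outside $\Aut(X,L)$ would destroy the inversion identity that underpins the whole argument.
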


We have proven that uniform K-stability with respect to arcs implies stability of the pair in any fixed projective space, and Paul has proven that a stable pair has finite automorphism group  \cite[Proposition 4.10]{paul-stablepairs-13}. Thus from the results of Section \ref{sec:applications} and Paul's work, we obtain the above result as a corollary. We give here a more direct proof that is amenable to further generalisation.

\begin{proof}

Let $[v]$ be the Chow point associated to $X$. Recall that the norm of an arc $\rho$ is defined to be
$$\|(\rho,[v])\|=\nu\left(\rho,\left[v,e^{\otimes \deg V}\right]\right)$$
where $V$ is the vector space of definition of the Chow point $V$, and $e$ is the identity in $\GL(H^0(X,rL))$.

Note first that Lemma \ref{inversion-lemma} together with the stability hypothesis implies that that any arc $\rho$ in  $\Aut(X,L)$ must satisfy $\|\rho\|=0$, where the latter denotes the norm of the induced model, by definition of uniform K-stability. 

To show that  $\Aut(X,L)$ is finite, it suffices to show that it contains no subgroups of the form $\C^*$ or $(\C,+)$. Since a product test configuration has positive norm \cite[Theorem 1.3]{uniform-twisted}\cite[Corollary B]{bhj:duistermaat}, the former is impossible. 

Thus suppose $\Aut(X,L)$ contains a subgroup of the form  $(\C,+)$, with variable $s$, producing an arc denoted $\rho_+$, following the process of Section \ref{sec:aut-arcs}.  Setting as before $\rho_+(z).v=c(z)\cdot v$, the Laurent series $c$ must be identically one as $(\C,+)$ is unipotent, and unipotent matrices have eigenvalues zero or one. One can also see this, following Paul \cite[Section 4.4]{paul-stablepairs-13}, by using that the assignment $z\to c(z)$ is a character of $(\C,+)$, and characters of unipotent groups are trivial. Thus, the weight on the Chow point---equal to the order of vanishing of the Laurent series $c$---is zero. 

The remaining term defining $\|\rho_+\|$ may be computed by fixing a matrix norm, which by Lemma \ref{lem:slope} allows us to reduce the computation to that of the asymptotics as $s\to 0$ of   $$
     \log\left|\left( {\begin{array}{cc}
   1 & s^{-1} \\
   0 & 1 \\
  \end{array} } \right)\right|
 + O(1) = \log |s|^{-1} + O(1),$$ which is enough to conclude that $\|\rho_+\|>0$, giving a contradiction. \end{proof}

\subsection{Reductivity}
We turn to the general case. We fix a maximal torus $T^{\C}$ of $\Aut(X,L)$ and consider the centraliser of $T^{\C}$ in the automorphism group $\Aut_0(X,L)$, namely the elements commuting with the maximal torus. Note that this is also the group agrees with the commutator of $H^{\C}$, since $T^{\C}\subset H^{\C}$ is a maximal torus. We denote this group by $\Aut_0(X,L)^{T^{\C}}$; this is known as the Cartan subgroup of $\Aut(X,L)$ associated with $T^{\C}$. We begin with the following observation.

\begin{lemma} The group $\Aut_0(X,L)^{T^{\C}}$ satisfies $\Aut_0(X,L)^{T^{\C}} \cong  T^{\C} \oplus U $.
\end{lemma}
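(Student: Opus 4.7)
The plan is to decompose $\Aut_0(X,L)^{T^{\C}}$ using the given Levi-type decomposition $\Aut_0(X,L) = H^{\C}\ltimes U$, together with the fact that $T^{\C}$ is a maximal torus of the connected reductive group $H^{\C}$. The first step is to consider the projection $\pi: \Aut_0(X,L) = H^{\C}\ltimes U \to H^{\C}$ with kernel $U$, and check its image on the centraliser. For any $g\in \Aut_0(X,L)^{T^{\C}}$, the image $\pi(g)$ commutes with $\pi(t) = t$ for every $t\in T^{\C}$, so $\pi(g)$ lies in the centraliser $C_{H^{\C}}(T^{\C})$. By the standard structure theory of connected reductive groups, a maximal torus is self-centralising, so $C_{H^{\C}}(T^{\C}) = T^{\C}$, and hence $\pi(\Aut_0(X,L)^{T^{\C}})\subset T^{\C}$.

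Next I would identify the kernel of $\pi$ restricted to $\Aut_0(X,L)^{T^{\C}}$: it consists of elements of $U$ commuting with $T^{\C}$, that is, the fixed locus $U^{T^{\C}}$ under the conjugation action of $T^{\C}\subset H^{\C}$ on the normal subgroup $U$. Combined with the previous paragraph, this yields a short exact sequence
$$1\to U^{T^{\C}}\to \Aut_0(X,L)^{T^{\C}}\to T^{\C}\to 1,$$
which splits canonically via the inclusion $T^{\C}\hookrightarrow H^{\C}\hookrightarrow \Aut_0(X,L)$, since $T^{\C}$ manifestly lies in $\Aut_0(X,L)^{T^{\C}}$ and projects isomorphically onto the quotient. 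This produces an identification $\Aut_0(X,L)^{T^{\C}}\cong U^{T^{\C}}\rtimes T^{\C}$.

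The final step is to observe that the action of $T^{\C}$ on $U^{T^{\C}}$ defining this semidirect product is, by construction, the conjugation action restricted to its own fixed locus, and hence is the trivial action. The semidirect product therefore collapses to a direct product, giving the desired decomposition $\Aut_0(X,L)^{T^{\C}}\cong T^{\C}\oplus U^{T^{\C}}$, where the $U$ in the statement of the lemma is to be read as the unipotent radical of the Cartan subgroup $\Aut_0(X,L)^{T^{\C}}$, namely $U^{T^{\C}}$. No substantial obstacle arises; the only non-formal input is the self-centralising property of a maximal torus in a connected reductive group, everything else being essentially a diagram chase through the Levi decomposition.
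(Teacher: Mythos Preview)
Your proof is correct. It differs from the paper's only in how the decomposition is obtained: the paper applies a Levi decomposition directly to $\Aut_0(X,L)^{T^{\C}}$ and then argues that the reductive Levi factor must equal $T^{\C}$ (using that $T^{\C}$ is maximal and central in the centraliser), whereas you work with the ambient decomposition $\Aut_0(X,L)=H^{\C}\ltimes U$ and project, invoking the self-centralising property of a maximal torus in $H^{\C}$ to identify the image. Both routes then finish in the same way, observing that the conjugation action of $T^{\C}$ on the unipotent kernel is trivial by construction. Your approach has the mild advantage of making the unipotent factor explicit as $U^{T^{\C}}$, clarifying that the $U$ appearing in the lemma is not the unipotent radical of the full automorphism group fixed earlier in the paper but rather its $T^{\C}$-fixed subgroup.
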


\begin{proof}
By general theory, the Lie group $\Aut_0(X,L)^{T^{\C}}$ may be written as the semidirect product of a unipotent group $U$ with a reductive group $R$, and since $R$ commutes with $T^{\C}$ it must lie in a maximal torus of $\Aut_0(X,L)^{T^{\C}}$. Since $T^{\C}$ is a maximal torus of $\Aut(X,L)$, it follows that $R=T^{\C}$. Thus $\Aut_0(X,L)^{T^{\C}}=U\ltimes T^{\C}$, and our final claim is that this is actually a trivial semidirect product. 

This semidirect product is equivalent to the data of a group homomorphism $\phi: T^{\C} \to \Aut(U)$ defined by $\phi_t(u) = tut^{-1}$. Since $T^{\C}$ commutes with $U$, it follows that $\phi$ is trivial and thus $\Aut_0(X,L)^{T^{\C}} \cong U \oplus T^{\C}$.\end{proof}

We may now prove our desired reductivity statement.

\begin{theorem}
Suppose $(X,L)$  is uniformly K-polystable with respect to arcs. Then $\Aut_0(X,L)^{T^{\C}}$ is reductive, and hence equals a maximal torus.
\end{theorem}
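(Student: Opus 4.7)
The plan is to argue by contradiction. By the preceding lemma $\Aut_0(X,L)^{T^{\C}}\cong T^{\C}\oplus U$ for some unipotent $U$, so it suffices to show $U$ is trivial; I would assume instead that there is a non-trivial one-parameter subgroup $\rho_+\colon(\C,+)\hookrightarrow U$. Following Section \ref{sec:aut-arcs}, $\rho_+$ produces an arc in $\Aut(X,L)$ with associated model $(\X,\L)$ of $(X,L)$, and the same construction applied to $\rho_+^{-1}$ yields a model $(\X^{-1},\L^{-1})$. Using that the centraliser of $T^{\C}$ in $\Aut_0(X,L)$ agrees with that of $H^{\C}$ (as recalled in the preceding discussion), both models are $H^{\C}$-equivariant.

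The first step is to invoke uniform K-polystability (passing from $\DF$ to $\mM\NA$ via Remark \ref{DFMNA}), giving
\[\mM\NA(\X,\L)\geq\varepsilon\|(\X,\L)\|_{T^{\C}},\qquad \mM\NA(\X^{-1},\L^{-1})\geq\varepsilon\|(\X^{-1},\L^{-1})\|_{T^{\C}}.\]
The argument of Lemma \ref{inversion-lemma} carries over verbatim to $\mM\NA$, since the central fibre of a product-type arc is reduced, yielding $\mM\NA(\X^{-1},\L^{-1})=-\mM\NA(\X,\L)$. Summing the two inequalities and using non-negativity of the reduced norm then forces $\|(\X,\L)\|_{T^{\C}}=\|(\X^{-1},\L^{-1})\|_{T^{\C}}=0$.

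The main obstacle is the second step: to derive a contradiction by showing $\|(\X,\L)\|_{T^{\C}}>0$ for non-trivial unipotent $\rho_+$, strengthening the computation $\|(\X,\L)\|>0$ from the proof of Theorem \ref{thm:nounipotent}. Embedding $X$ via $|rL|$ into $\GL(H^0(X,rL))$, Paul's Theorem \ref{thm:paul} together with Corollary \ref{coro:slopejred} and Proposition \ref{prop:slopeequivfindim} identifies the reduced norm with the infimum over $\xi\in N_{\R}$ of the slope at $z=0$ of $\log|\rho_+(z)\lambda_\xi(z)|-\deg R(X)\mi\log|R((\rho_+\lambda_\xi)^*X)|$. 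The Chow contribution simplifies since $\rho_+$ has trivial character on the Chow line (unipotent characters being trivial), reducing to $\deg R(X)\mi\chi_R(\xi)\log|z|\mi$ for $\chi_R$ the Chow character of $T^{\C}$. The nilpotent generator $N$ of $\rho_+$ commutes with $T^{\C}$ and hence preserves the weight-space decomposition of $H^0(X,rL)$; on the $\mu$-weight space, $\exp(z\mi N)\lambda_\xi(z)$ has operator norm of order $|z|^{\mu(\xi)-k_\mu+1}$, where $k_\mu$ is the nilpotency order of $N$ restricted there.

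The remaining quantitative input, and the technical heart of the argument, is to show that
\[\inf_{\xi\in N_{\R}}\left[\max_\mu(k_\mu-1-\mu(\xi))+\deg R(X)\mi\chi_R(\xi)\right]>0,\]
which I would establish by combining (i) the fact that $N\neq 0$ forces some $k_\mu\geq 2$, (ii) a Hilbert--Mumford-type identity expressing $\chi_R$ as a positive averaged combination of the $T^{\C}$-weights $\mu$ on $H^0(X,rL)$ coming from the action on $X$, and (iii) properness of the norm functional (ensured under our hypothesis by Theorem \ref{thm:polyeq}), which yields attainment of the infimum. This strict positivity contradicts the vanishing of the reduced norm obtained in the first step, completing the proof.
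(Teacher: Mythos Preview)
Your first step---using Lemma \ref{inversion-lemma} on $\rho_+$ and $\rho_+^{-1}$ together with the polystability inequality to force $\|(\X,\L)\|_{T^{\C}}=0$---is exactly the paper's.

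For the second step the paper is considerably more direct. Rather than expanding the reduced norm into your optimisation problem, it simply observes that $\|\rho_+\circ\lambda\|\geq\|\lambda\|$ for every one-parameter subgroup $\lambda$ of $T^{\C}$: the Chow weights of $\rho_+\circ\lambda$ and of $\lambda$ agree (since $\rho_+$ acts trivially on the Chow line, as you also note), while the matrix-norm weight can only increase when one composes with the unipotent factor. Combined with the known positivity $\|\lambda\|>0$ for $\lambda\neq 0$ (product test configurations have positive norm, extended to $N_{\R}$ via Hamiltonians) and the case $\xi=0$ already handled in Theorem \ref{thm:nounipotent}, this yields $\|\rho_+\|_{T^{\C}}\geq\|\xi\|>0$ immediately. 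Your route is in fact driven by the same input: your claim (ii), that $\chi_R/\deg R(X)$ is a strictly positive convex combination of the section weights, is precisely the statement that $\chi_R/\deg R(X)$ lies in the interior of the weight polytope, which is a reformulation of $\|\lambda_\xi\|>0$ for $\xi\neq 0$. So (ii) is true, but you have repackaged the paper's one-line input as an unproven ``Hilbert--Mumford-type identity'' and then unpacked it through a longer computation.

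Two issues with your execution deserve flagging. First, you route through Paul's Theorem \ref{thm:paul} and Corollary \ref{coro:slopejred}, both of which are set up for smooth $X$; the present theorem is stated (and proved in the paper) for possibly singular $(X,L)$, so this is a genuine gap---the paper avoids it by working directly with intersection numbers and extending the norm to $N_{\R}$ via Hamiltonians on a resolution, never invoking the K\"ahler $J$-functional. Second, properness of the norm functional is a \emph{hypothesis} of Theorem \ref{thm:polyeq}, not a consequence; it is verified independently (in the smooth case via \cite[Lemma 2.15]{chili:guniform}), and in your formulation it follows from the same positivity $\|\lambda_\xi\|>0$ together with homogeneity in $\xi$.
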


\begin{proof}
By the same argument as in the proof of Theorem \ref{thm:nounipotent}, as  $(X,L)$ is in particular K-semistable with respect to arcs, it follows that $$\DF(\rho) = -\DF(\rho^{-1})$$ for any arc $\rho$ in $\Aut_0(X,L)^{T^{\C}} \cong T^{\C} \oplus U$ and so any such $\rho$ must satisfy $\|\rho_+\|_{T^{\C}}=0$ (meaning the norm of the induced model). Supposing that there exists a subgroup of $U\subset U\oplus T^\bbc$ isomorphic to $(\bbc,+)$, and denoting by $\rho_+$ the induced arc, it suffices to show that $\|\rho_+\|_{T^{\C}}>0$  to show that such a subgroup cannot exist, and therefore that $U$ is trivial.

As the infimiser defining $\|\rho_+\|_{T^{\C}}$ may not be rational, we first extend the norm from one-parameter subgroups $N_{\Z}$ to general elements of $N_{\R}$. We do so essentially by continuity, and it is most transparent to argue analytically. Fixing a $T$-invariant K\"ahler metric $\omega\in c_1(L)$, we may associate to an arbitrary element $\xi  \in N_{\R}$ a Hamiltonian function $h_{\xi}$. In this way, for $\xi\in N_{\Z}$ we have $$\|\xi\| = \sup_X h_{\xi} - \frac{1}{L^n}\int_X h_{\xi}\omega^n,$$ which is a standard result when $X$ is smooth and is easily seen to also hold when $X$ is singular (for example by resolving singularities and arguing by continuity), and importantly for this extension it remains true that the norm of $\xi \in N_{\R}$ vanishes if and only if $\xi=0$.

Let $\xi$ be the element of $N_{\R}$ achieving the infimum defining $\|\rho_+\|_{T^{\C}}$. If $\xi=0$, we are done by the argument of Theorem \ref{thm:nounipotent}, so assume that $\xi\neq 0$. We claim that 
\begin{equation}\label{eq:lower-bound}
\|\rho_+\|_{T^{\C}} \geq \|\xi\|,
\end{equation} which will imply the result since $\|\xi\| >0$ by assumption. 

To prove the claim, we begin by showing that for any $\lambda$ a one-parameter subgroup in $T^{\C}$ that we have the bound $$\|\rho_+\circ \lambda\| \geq \|\lambda\|.$$ As in the proof of Theorem \ref{thm:nounipotent}, we have $\rho_+(t).v = v$, thus the weights of the Chow point $[v]$ under $\rho_+$ and $\rho_+\circ \lambda$ are equal. It then suffices to compare the matrix part of $\|\rho_+\|$ and $\|\lambda\circ\rho_+\|$. The computation of the latter again reduces to understanding the asymptotics of the log-norm of a block-matrix, namely $$\log\left|
  {  \left(  \begin{array}{cc}
   \lambda(t) & 0 \\
   0 & \rho_+(t) \\
  \end{array}\right) }\right|.$$ The associated weight is then the maximum of the weights of $\lambda(t)$ and $\rho(t)$, giving the lower bound of Equation \eqref{eq:lower-bound} for such $\lambda$. Rhe continuity statement of Lemma \ref{lem:continuity} and continuity of the norm on $N_{\R}$ imply then that for $\xi$ the infimiser of $\|\rho_+\|_{T^{\C}}$ that we have $$\|\rho_+\|_{T^{\C}} = \inf_{\lambda \in N_{\Z}} \|\rho_+\circ\lambda\| \geq \|\xi\|,$$ which proves Equation \ref{eq:lower-bound} and hence the result. \end{proof}

\section{The Yau--Tian--Donaldson conjecture and the partial $C^0$-estimate}

The purpose of this section is to prove Corollary \ref{coro:intro2} from the introduction, and to reduce the general Yau--Tian--Donaldson conjecture to the partial $C^0$-estimate. As explained in the introduction, we build on Tian's programme, following ideas of Tian along the lines of \cite[Section 5]{sze:partial} and \cite[Section 6]{bhj:asymptotics} (who explain the results in the present section for Fano manifolds with discrete automorphism group).

We fix $(X,L)$ a smooth polarised variety, along with reference K\"ahler metrics $\omega, \alpha \in c_1(L)$ assumed to be $H$-invariant (where as usual $H \subset \Aut_0(X,L)$ is a maximal compact subgroup and $T\subset H$ is a maximal torus). We consider the twisted continuity method introduced by Chen \cite{chen-continuity}, namely a collection $\phi_s \in \H^H$ satisfying $$S(\omega_s) - s\Lambda_{\omega_s}\alpha = \hat S_s,$$ where $\hat S_s$ is the appropriate topological constant and $\omega_s = \omega+\ddb \phi_s$. We note that solutions are unique when they exist for $s>0$, and are unique up to biholomorphisms for $t=0$ \cite{berber:kenergy}. Existence for large $t$ is due to Hashimoto and Zeng \cite{hashimoto, zeng}, while openness of the set of $s$ for which there is a solution is due to Chen \cite{chen-continuity}. In the special case of Fano manifolds with $L=-K_X$ the anticanonical class, this is a reparametrisation of the usual continuity method $$\Ric(\omega_s) = s\omega_s + (1-s)\alpha.$$

To an arbitrary $\phi \in \H^H$ inducing a K\"ahler metric $\omega_{\phi}$, for each $r>0$ we obtain a Hermitian inner product on each $H^0(X,rL)$ by taking the $L^2$-inner product; this in turn induces Fubini--Study metric which we denote $\FS_r(\phi)\in\cH^H_{r}$. 

\begin{definition} Consider the set $\cC$ of solutions $\phi_s$ of solutions of the twisted cscK equation, for which a solution exists.  We say that $(X,L)$ satisfies the \textit{partial $C^0$-estimate along the continuity method} in exponent $r$ if there exists $C>0$ such that for all $s$,
$$|\FS_k(\phi_s)-\phi_s|\leq C.$$
\end{definition}

This condition can also be viewed in terms of Bergman kernels. By a fundamental result of Székelyhidi \cite{sze:partial}, if $X$ is Fano and $L=-K_X$, then the partial $C^0$-estimate holds for all exponents $r\gg 0$. 

\begin{theorem}\label{thm:contmethod}
Assume the following:
\begin{enumerate}[(i)]
\item there exists $\varepsilon>0$ such that $M^\na(\cX,\cL)\geq \varepsilon \|(\cX,\cL)\|_{T^{\C}}$ for all $H^{\C}$-equivariant models $(\cX,\cL)$ of $(X,L)$ of exponent $r$;
\item $(X,L)$ satisfies the partial $C^0$-estimate along the continuity method in exponent $r$.
\end{enumerate}
Then there exists a $\delta>0$ such that, for all $\phi_s\in  \cC$ along the continuity method
$$M(\phi_s)\geq \epsilon J_{T}(\phi_s)-\delta.$$
\end{theorem}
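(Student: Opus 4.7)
The plan is to combine the main equivalence of Theorem \ref{thm:polyeq} with the partial $C^0$-estimate to transfer the coercivity that assumption (i) yields on Fubini--Study metrics to the continuity path.

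First, since assumption (i) is precisely condition (ii) of Theorem \ref{thm:polyeq} (for arcs of exponent $r$), that theorem gives a constant $\delta_r>0$ such that
$$M(\psi)\geq \varepsilon J_T(\psi)-\delta_r\quad\text{for every }\psi\in\cH^H_r.$$
As each continuity-method solution $\phi_s$ is $H$-invariant, so is the $L^2$-induced Fubini--Study metric $\FS_r(\phi_s)\in\cH^H_r$, hence
$$M(\FS_r(\phi_s))\geq \varepsilon J_T(\FS_r(\phi_s))-\delta_r.$$

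Second, by hypothesis (ii) we have a uniform bound $\|\FS_r(\phi_s)-\phi_s\|_\infty\leq C$ independent of $s$. The functionals $E$, $R$, $J$ and $J_T$ are Lipschitz in the $L^\infty$-norm on potentials (a standard cocycle computation using the defining integrals in Definition \ref{def:functionals}), so in particular
$$J_T(\FS_r(\phi_s))\geq J_T(\phi_s)-C',\qquad |R(\phi_s)-R(\FS_r(\phi_s))|+|E(\phi_s)-E(\FS_r(\phi_s))|\leq C''.$$

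The remaining, and main, analytic ingredient is a uniform lower bound of the form $H(\phi_s)\geq H(\FS_r(\phi_s))-C'''$, which together with the bounds on $R$ and $E$ yields $M(\phi_s)\geq M(\FS_r(\phi_s))-C''''$. This is where the rigidity of the continuity method enters: the Chen--Cheng estimates applied to $S(\omega_s)-s\Lambda_{\omega_s}\alpha=\hat S_s$ provide a uniform (in $s$) $L^\infty$-bound on $\log(\omega_s^n/\omega^n)$, so that $H(\phi_s)$ is uniformly bounded. Combined with the uniform $C^0$-closeness $\|\FS_r(\phi_s)-\phi_s\|_\infty\leq C$ and standard Bergman kernel asymptotics---which control the entropy of the Bergman projection in terms of that of the underlying potential---this yields the required uniform comparison of entropies. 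This step is where I expect the main obstacle: the Mabuchi functional is not Lipschitz in $L^\infty$ due to the entropy term, so the estimate really uses both the PDE satisfied by $\phi_s$ and the quantisation at the fixed exponent $r$.

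Assembling these estimates,
$$M(\phi_s)\geq M(\FS_r(\phi_s))-C''''\geq \varepsilon J_T(\FS_r(\phi_s))-\delta_r-C''''\geq \varepsilon J_T(\phi_s)-\varepsilon C'-\delta_r-C'''',$$
which is the desired coercivity with $\delta=\varepsilon C'+\delta_r+C''''$. Altogether, the whole proof is routine \emph{modulo} Theorem \ref{thm:polyeq} and the entropy comparison; the latter is the only genuinely new analytic input required beyond what is granted by the partial $C^0$-estimate and the Chen--Cheng a priori estimates.
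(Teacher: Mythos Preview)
Your first two steps---invoking Theorem \ref{thm:polyeq} to get coercivity on $\cH^H_r$, then applying it to $\FS_r(\phi_s)$---match the paper exactly, as does your treatment of $J_T$ (the paper isolates this as Lemma \ref{lem:jtest}). The divergence, and the gap, is in the entropy comparison.

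You want $M(\phi_s)\geq M(\FS_r(\phi_s))-C''''$ and propose to get it from Chen--Cheng estimates giving a uniform $L^\infty$ bound on $\log(\omega_s^n/\omega^n)$. This is problematic on two counts. First, the Chen--Cheng a priori estimates take a bound on the entropy (equivalently, on the Mabuchi functional) as \emph{input}, not as output; invoking them here to bound the volume ratio is circular, since an entropy bound along the continuity path is essentially what you are trying to establish. Second, even granting a uniform bound on $H(\phi_s)$, this does not by itself give the one-sided comparison $H(\phi_s)\geq H(\FS_r(\phi_s))-C'''$: you would also need a uniform \emph{upper} bound on $H(\FS_r(\phi_s))$, and your appeal to ``standard Bergman kernel asymptotics'' does not supply one without further argument.

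The paper avoids all of this. The required inequality
\[
M(\FS_r(\phi))\leq M(\phi)+h^0(X,rL)\sup_X|\FS_r(\phi)-\phi|
\]
holds for an \emph{arbitrary} $\phi\in\cH^H$, with no PDE input whatsoever; it is \cite[Lemma 6.5(ii), Lemma 6.6]{bhj:asymptotics}. The point is that $\omega_{\FS_r(\phi)}^n$ is, up to a controlled factor, dominated by a multiple of $\omega_\phi^n$ via the Bergman kernel, and a Jensen-type argument then gives the one-sided entropy comparison directly. So the ``main obstacle'' you flag is in fact already handled by a clean lemma that uses only the partial $C^0$-estimate, and the continuity method plays no role in this step.
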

\begin{proof}
By Theorem \ref{thm:polyeq}, $(i)$ is equivalent to existence of $\delta'>0$ such that 
$$M(\phi)\geq \varepsilon J_{T}(\phi)-\delta'$$ for all $\phi\in\cH^H_{\omega,r}$. In particular for all $\phi_s\in \cC$,
$$M(\FS_r(\phi_s))\geq \varepsilon J_{T}(\FS_r(\phi_s))-\delta'.$$
By \cite[Lemma 6.5 (ii), Lemma 6.6]{bhj:asymptotics} we have that 
$$M(\FS_r(\phi_s))\leq M(\phi_s)+h^0(X,rL)\sup_X|\FS_k(\phi_s)-\phi_s|\leq M(\phi_t)+h^0(X,rL)C$$
where $C$ is the constant appearing in the partial $C^0$-estimate assumed by (ii). The $J_T$-part is taken care of similarly by Lemma \ref{lem:jtest} below, which gives $$|J_{T}(\FS_r(\phi))-J_{T}(\phi)|\leq 2\sup_X |\FS_r(\phi)-\phi|.$$ As this is bounded through the partial $C^0$-estimate, we obtain a $\delta>0$ such that $$M(\phi_s) \geq \epsilon J_T(\phi_s) - \delta,$$ proving the result.
\end{proof}

We prove the requisite lemma (see also Hisamoto \cite[p.\ 10]{hisamoto:toric})

\begin{lemma}\label{lem:jtest}
For all $\phi,\phi'\in \cH^H$ we have
$$|J_{T}(\phi)-J_{T}(\phi')|\leq 2\sup_X |\phi-\phi'|.$$
\end{lemma}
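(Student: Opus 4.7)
The plan is to reduce to the non-reduced analogue of the inequality, namely that $|J(\phi)-J(\phi')|\leq 2\sup_X|\phi-\phi'|$, and then use the infimum definition of $J_T$ together with the $T^\bbc$-equivariance of the $J$-functional.

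First I would establish the unreduced estimate. Writing $J(\phi)=\tfrac{1}{L^n}\int_X\phi\,\omega^n-E(\phi)$, the first term satisfies $|\tfrac{1}{L^n}\int_X(\phi-\phi')\omega^n|\leq\sup_X|\phi-\phi'|$ trivially. For the second, $E$ is a primitive of the Monge--Amp\`ere operator, and the standard estimate (coming from the cocycle formula and monotonicity of Monge--Amp\`ere masses for K\"ahler potentials) yields $|E(\phi)-E(\phi')|\leq \sup_X|\phi-\phi'|$, via
\[
E(\phi)-E(\phi')=\frac{1}{(n+1)L^n}\sum_{j=0}^n\int_X(\phi-\phi')\,\omega_\phi^j\wedge\omega_{\phi'}^{n-j},
\]
each integrand being a probability measure up to normalisation. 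Adding the two estimates gives $|J(\phi)-J(\phi')|\leq 2\sup_X|\phi-\phi'|$. Crucially, $J$ is invariant under the addition of constants to $\phi$, so it is well-defined on equivalence classes $\phi\sim \phi+c$, which is what is needed for the $T^\bbc$-action.

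Next I would use the $T^\bbc$-action on $\cH^H$, which by Section \ref{sec:functionals} is characterised by $\omega_{t.\phi}=t^*\omega_\phi$, equivalently $t.\phi=\phi\circ t+\rho_t$ up to an additive constant, where $\rho_t$ depends only on $t$ (and the reference $\omega$). Subtracting two such identities for $\phi$ and $\phi'$ we obtain
\[
t.\phi-t.\phi'=(\phi-\phi')\circ t \pmod{\mathrm{constants}},
\]
and since $t$ is a biholomorphism of $X$ we get $\sup_X|t.\phi-t.\phi'|=\sup_X|\phi-\phi'|$. Combined with the constant-invariance of $J$, this is the key equivariance input.

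Finally I would combine these. For any $t\in T^\bbc$,
\[
J_T(\phi)\leq J(t.\phi)\leq J(t.\phi')+2\sup_X|t.\phi-t.\phi'|=J(t.\phi')+2\sup_X|\phi-\phi'|.
\]
Taking the infimum over $t\in T^\bbc$ on the right yields $J_T(\phi)\leq J_T(\phi')+2\sup_X|\phi-\phi'|$, and the reverse direction follows by symmetry. The only conceptual point to handle carefully, rather than an obstacle, is the indeterminacy of $t.\phi$ up to constants, which is absorbed by constant-invariance of $J$.
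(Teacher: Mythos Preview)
Your proposal is correct and follows essentially the same approach as the paper: reduce to the unreduced estimate $|J(\phi)-J(\phi')|\le 2\sup_X|\phi-\phi'|$, observe that $t.\phi-t.\phi'=(\phi-\phi')\circ t$ (modulo constants, absorbed by the constant-invariance of $J$), and pass to the infimum over $T^{\bbc}$. The paper simply cites the unreduced estimate from \cite{bhj:asymptotics} rather than writing it out, and packages the final step as $|\inf_t J(t.\phi)-\inf_t J(t.\phi')|\le\sup_t|J(t.\phi)-J(t.\phi')|$, but the content is identical.
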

\begin{proof}
By \cite[Lemma 6.5 (i), Lemma 6.6]{bhj:asymptotics} we have that
\begin{equation}\label{eq:jt1}
|J(\phi)-J(\phi')|\leq 2\sup_X |\phi-\phi'|.
\end{equation}
Furthermore, for all $t\in T^\bbc$,
\begin{equation}\label{eq:jt2}
\sup_X|t.\phi-t.\psi|=\sup_X |\phi-\phi'|
\end{equation}
holds. Indeed, a straightforward calculation shows that for $t\in T^{\C}$ that $$t.\phi-t.\psi=t.(\phi-\psi)=t^*(\phi - \psi),$$ giving \eqref{eq:jt2} since $T^\bbc$ acts by biholomorphisms. Combining \eqref{eq:jt1} and \eqref{eq:jt2}, using that $J$ is nonnegative, we find
$$|J_{T}(\phi)-J_{T}(\phi')|\leq \sup_t |J(t.\phi)-J(t.\phi')|\leq 2\sup_t \sup_X |t.\phi-t.\phi'|= 2\sup_X |\phi-\phi'|$$
as desired.\end{proof}

We immediately obtain the following.

\begin{theorem}
Assume that $(X,L)$ satisfies the partial $C^0$-estimate along the continuity method some $r>0$. Then the following are equivalent:
\begin{enumerate}[(i)]
	\item there exists a cscK metric in $c_1(L)$.
		\item $(X,L)$ is uniformly K-polystable with respect to arcs;

\end{enumerate}
\end{theorem}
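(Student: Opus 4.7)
The direction $(i) \Rightarrow (ii)$ is immediate from Corollary \ref{body:cc-cor}, which already gives that existence of a cscK metric implies uniform K-polystability with respect to arcs, without needing the partial $C^0$-estimate hypothesis.

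For the converse $(ii) \Rightarrow (i)$, my plan is to exploit the twisted continuity method $S(\omega_s) - s \Lambda_{\omega_s}\alpha = \hat S_s$ and close it up at $s=0$, where a solution is by definition a cscK metric. First I would apply Theorem \ref{thm:contmethod}: the two assumptions combine to produce $\epsilon,\delta>0$ such that the coercivity estimate
\begin{equation*}
M(\phi_s) \geq \epsilon J_T(\phi_s) - \delta
\end{equation*}
holds along the entire continuity path $\cC$. By \cite{chen-continuity} the set $\cC$ is open in $[0,+\infty)$, and by \cite{hashimoto, zeng} it is nonempty for large $s$, so it suffices to prove closedness down to $s=0$, i.e.\ to show that a sequence $s_j \to s_\infty \geq 0$ with $\phi_{s_j}\in\cC$ subconverges (after twisting by $T^\bbc$) to a solution at $s_\infty$. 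Openness then forces $s_\infty=0$, and the corresponding limit is the desired cscK metric.

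For the closedness step, I would first observe that $M(\phi_s)$ is uniformly bounded above along the continuity path: this is a standard consequence of the twisted equation (the twisted Mabuchi functional is monotone along the path, with a controllable discrepancy from $M$ coming from the fixed $\alpha$-term). Combined with the coercivity estimate, this yields a uniform bound on $J_T(\phi_s)$. Choosing $t_s \in T^\bbc$ realising the infimum in $J_T(\phi_s) = \inf_{t} J(t.\phi_s)$ (up to a controlled error), the reduced potentials $t_s.\phi_s$ are then uniformly bounded in the $J$-energy.

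The main obstacle is promoting this $J$-bound into a full $C^{\infty}$-bound modulo the $T^\bbc$-action, so as to pass to the limit in the twisted cscK equation; this is precisely where the partial $C^0$-estimate hypothesis enters. The estimate gives $\sup_X |\FS_r(t_s.\phi_s) - t_s.\phi_s| \leq C$, reducing the control of $t_s.\phi_s$ in $C^0$ to the control of $\FS_r(t_s.\phi_s)$ inside the finite-dimensional space $\cH^H_r \cong \GL/\moU$, where the uniform $J_T$-bound becomes compactness modulo the $T^\bbc$-action. Standard elliptic bootstrapping for the twisted cscK equation in the style of Chen--Cheng \cite{chencheng:ii, chencheng:iii} then upgrades $C^0$-estimates to $C^{\infty}$-estimates, allowing the extraction of a smooth subsequential limit solving the equation at $s_\infty$. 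The resulting argument is the natural adaptation---to polarised varieties with automorphisms, and arcs in place of test configurations---of the scheme carried out by Sz\'ekelyhidi \cite[Section 5]{sze:partial} and Boucksom--Hisamoto--Jonsson \cite[Section 6]{bhj:asymptotics} in the Fano setting with discrete automorphism group.
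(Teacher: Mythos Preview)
Your argument tracks the paper's proof up to and including the key point: uniform K-polystability plus the partial $C^0$-estimate feed into Theorem \ref{thm:contmethod} to give the coercivity bound $M(\phi_s) \geq \epsilon J_T(\phi_s) - \delta$ along the continuity path, and since $M$ is bounded above along the path (the paper justifies this by noting that a twisted cscK metric minimises the twisted Mabuchi functional, which is monotone in $s$), one obtains a uniform bound on $J_T(\phi_s)$.

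Where you diverge is the closing step. The paper simply invokes Chen--Cheng \cite{chencheng:ii,chencheng:iii}: their a priori estimates show directly that a uniform $J_T$-bound along the twisted continuity method forces convergence (of the twists $t_s.\phi_s$) to a cscK metric. No further appeal to Fubini--Study approximations is needed. You instead re-invoke the partial $C^0$-estimate to pass through the finite-dimensional space $\cH^H_r$ and argue compactness there. This is both redundant---the estimate has already done its work inside Theorem \ref{thm:contmethod}---and not fully justified: a uniform $J$-bound on $\FS_r(t_s.\phi_s)$ does not obviously yield relative compactness in $\cH^H_r$ without an additional properness statement for $J$ on that space, which you do not supply. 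Moreover, the ``elliptic bootstrapping in the style of Chen--Cheng'' you cite at the end \emph{is} precisely the content of the Chen--Cheng result the paper invokes, so your detour through $\cH^H_r$ gains nothing. The fix is simply to drop that paragraph and cite Chen--Cheng directly once the $J_T$-bound is in hand.
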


\begin{proof}
By Chen--Cheng \cite{chencheng:ii,chencheng:iii}, $(i)$ is equivalent to coercivity of the Mabuchi functional on $\cH^H$, which by Corollary \ref{body:cc-cor} implies uniform K-polystability with respect to arcs. 

Conversely, suppose $(X,L)$ is uniformly K-polystable with respect to arcs, and suppose $(X,L)$ satisfies the partial $C^0$-estimate along the continuity method. Note that the Mabuchi functional is decreasing along the twisted continuity method, since a twisted cscK metric for $s>0$ achieves the unique minimum of the twisted Mabuchi functional \cite{berber:kenergy}, and this functional is monotonic along the continuity method. 

So $M$ is bounded above along the continuity method, and hence Theorem \ref{thm:contmethod} implies---since we assume the partial $C^0$-estimate for some exponent $r$---that we have $$M(\phi_s)\geq \epsilon J_{T}(\phi_s)-\delta$$ and so $J_{T}(\phi_s)$ is also bounded above along the continuity method. By Chen--Cheng \cite{chencheng:ii,chencheng:iii} this implies that $c_1(L)$ admits a cscK metric (which they construct as the limit of the twists $t_s.\phi_s$ computing $J_T(\phi_s)$).
\end{proof}

As Sz\'ekelyhidi has established the partial $C^0$-estimate along the continuity method in the Fano case \cite{sze:partial}, we obtain Corollary \ref{coro:intro2}:

\begin{corollary}
A Fano manifold $X$ admits a K\"ahler--Einstein metric if and only if it is uniformly K-polystable with respect to arcs.
\end{corollary}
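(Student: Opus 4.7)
The plan is to deduce the corollary as an immediate specialisation of the preceding theorem, once the Fano-specific inputs are supplied. Throughout I take $L=-K_X$; in this case cscK metrics in $c_1(L)$ are precisely K\"ahler--Einstein metrics, so the two sides of the Fano statement match the two sides of the preceding equivalence verbatim.

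The only hypothesis of the preceding theorem that is not automatic in the Fano setting is the partial $C^0$-estimate along the twisted continuity method in some exponent $r$. For Fano manifolds, however, the twisted continuity method considered in Section \ref{sec:applications} is (up to reparametrisation) the classical Aubin continuity path $\Ric(\omega_s)=s\omega_s+(1-s)\alpha$, and the partial $C^0$-estimate along this path has been established by Sz\'ekelyhidi \cite{sze:partial}: there exist $r\gg 0$ and $C>0$ such that $\sup_X|\FS_r(\phi_s)-\phi_s|\leq C$ for every $\phi_s$ along the continuity method, uniformly in $s$. I would cite this result directly and check that the reparametrisation relating $\Ric(\omega_s)=s\omega_s+(1-s)\alpha$ to $S(\omega_s)-s\Lambda_{\omega_s}\alpha=\hat S_s$ preserves $H$-invariance and the range of existence, so that Sz\'ekelyhidi's estimate applies to the family $\cC$ considered here.

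With the partial $C^0$-estimate in hand, I would apply the preceding theorem to conclude the equivalence of the existence of a K\"ahler--Einstein metric with uniform K-polystability with respect to arcs. Concretely, the forward direction follows from Corollary \ref{body:cc-cor} (which combines Chen--Cheng with Theorem \ref{thm:polyeq}) and does not even use the partial $C^0$-estimate. For the reverse direction, uniform K-polystability with respect to arcs gives the coercivity estimate $M(\phi_s)\geq \varepsilon J_T(\phi_s)-\delta$ along the continuity method by Theorem \ref{thm:contmethod}; since $M$ is monotone decreasing along the twisted continuity path (as each twisted cscK metric minimises the corresponding twisted Mabuchi functional), this bounds $J_T(\phi_s)$ uniformly, and Chen--Cheng \cite{chencheng:ii,chencheng:iii} then produce a cscK---hence K\"ahler--Einstein---limit.

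The only real obstacle in assembling the argument is checking that Sz\'ekelyhidi's partial $C^0$-estimate, which is usually stated for the Aubin path on Fano manifolds with discrete automorphisms, transfers to the $H$-invariant twisted cscK continuity method used in our statement of the partial $C^0$-estimate. I expect this to be routine: $H$-invariance of the solutions follows from uniqueness together with $H$-invariance of $\omega$ and $\alpha$, and the Bergman kernel asymptotics underlying \cite{sze:partial} are insensitive to the presence of $H$. No further calculation beyond the invocation of these results should be needed.
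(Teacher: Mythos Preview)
Your proposal is correct and follows essentially the same approach as the paper: the corollary is stated there as an immediate consequence of the preceding theorem together with Sz\'ekelyhidi's partial $C^0$-estimate for Fano manifolds, with no further argument given. Your write-up is in fact more detailed than the paper's, which simply cites \cite{sze:partial} and moves on; the concern you raise about transferring the estimate to the $H$-invariant setting is reasonable but is not addressed explicitly in the paper either.
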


\begin{remark} It is not clear whether one should expect this strong form of the partial $C^0$-estimate, namely the uniform bound $|\FS_r(\phi_s)-\phi_s|\leq C$ along the continuity method, to hold in complete generality. We remark that the version used in the introduction, namely an equivalence of coercivity of the Mabuchi functional on the full space of K\"ahler metrics and on each space of Fubini--Study metrics, is equivalent to the arc version of the Yau--Tian--Donaldson conjecture by Theorem \ref{intromainthm}, and in particular is implied by the uniform, test configuration version of the Yau--Tian--Donaldson conjecture. What the present section shows is that the former version of the partial $C^0$-estimate  implies the latter (hence weaker) version, which is expected to hold in general.
\end{remark}

\bibliographystyle{alpha}
\bibliography{bib}

\begin{thebibliography}{ACGTF08}

\bibitem[ABHLX20]{kmod:reductivity}
Jarod Alper, Harold Blum, Daniel Halpern-Leistner, and Chenyang Xu.
\newblock Reductivity of the automorphism group of {K}-polystable {Fano}
  varieties.
\newblock {\em Invent. Math.}, 222(3):995--1032, 2020.

\bibitem[ACGTF08]{ACGTF}
Vestislav Apostolov, David M.~J. Calderbank, Paul Gauduchon, and Christina~W.
  T{\o}nnesen-Friedman.
\newblock Hamiltonian 2-forms in {K}\"ahler geometry. {III}. {E}xtremal metrics
  and stability.
\newblock {\em Invent. Math.}, 173(3):547--601, 2008.

\bibitem[BB17]{berber:kenergy}
Robert Berman and Bo~Berndtsson.
\newblock Convexity of the {K}-energy on the space of {K}{\"a}hler metrics and
  uniqueness of extremal metrics.
\newblock {\em Journal of the American Mathematical Society}, 30(4):1165--1196,
  2017.

\bibitem[BBJ21]{bbj}
Robert~J. Berman, S\'{e}bastien Boucksom, and Mattias Jonsson.
\newblock A variational approach to the {Y}au-{T}ian-{D}onaldson conjecture.
\newblock {\em J. Amer. Math. Soc.}, 34(3):605--652, 2021.

\bibitem[BDL20]{BDL:aens}
Robert~J. Berman, Tam\'as Darvas, and Chinh~H. Lu.
\newblock Regularity of weak minimizers of the {$K$}-energy and applications to
  properness and {$K$}-stability.
\newblock {\em Ann. Sci. \'Ec. Norm. Sup\'er. (4)}, 53(2):267--289, 2020.

\bibitem[BE21]{boueri}
S{\'e}bastien Boucksom and Dennis Eriksson.
\newblock Spaces of norms, determinant of cohomology and {F}ekete points in
  non-{A}rchimedean geometry.
\newblock {\em Advances in Mathematics}, 378:107501, 2021.

\bibitem[Ber16]{ber:kpoly}
Robert~J. Berman.
\newblock K-polystability of {$\mathbb Q$}-{F}ano varieties admitting
  {K}{\"a}hler-{E}instein metrics.
\newblock {\em Inventiones mathematicae}, 203(3):973--1025, 2016.

\bibitem[BHJ17]{bhj:duistermaat}
S{\'e}bastien Boucksom, Tomoyuki Hisamoto, and Mattias Jonsson.
\newblock Uniform {K}-stability, duistermaat--heckman measures and
  singularities of pairs.
\newblock In {\em Annales de l'Institut Fourier}, volume~67, pages 743--841,
  2017.

\bibitem[BHJ19]{bhj:asymptotics}
S\'{e}bastien Boucksom, Tomoyuki Hisamoto, and Mattias Jonsson.
\newblock Uniform {K}-stability and asymptotics of energy functionals in
  {K}\"{a}hler geometry.
\newblock {\em J. Eur. Math. Soc. (JEMS)}, 21(9):2905--2944, 2019.

\bibitem[BJ17]{bj:trop}
S{\'e}bastien Boucksom and Mattias Jonsson.
\newblock Tropical and non-{A}rchimedean limits of degenerating families of
  volume forms.
\newblock {\em Journal de l’{\'E}cole polytechnique—Math{\'e}matiques},
  4:87--139, 2017.

\bibitem[BX19]{kmod:separatedness}
Harold Blum and Chenyang Xu.
\newblock Uniqueness of {K}-polystable degenerations of {F}ano varieties.
\newblock {\em Annals of Mathematics}, 190(2):609--656, 2019.

\bibitem[CC18]{chencheng:iii}
Xiuxiong Chen and Jingrui Cheng.
\newblock On the constant scalar curvature {K}{\"a}hler metrics, general
  automorphism group.
\newblock {\em arXiv preprint arXiv:1801.05907}, 2018.

\bibitem[CC21]{chencheng:ii}
Xiuxiong Chen and Jingrui Cheng.
\newblock On the constant scalar curvature {K}\"{a}hler metrics
  ({II})---{E}xistence results.
\newblock {\em J. Amer. Math. Soc.}, 34(4):937--1009, 2021.

\bibitem[CDS15]{ytd:cds}
Xiuxiong Chen, Simon~K. Donaldson, and Song Sun.
\newblock K\"ahler-{E}instein metrics on {F}ano manifolds. {I, II, III}.
\newblock {\em J. Amer. Math. Soc.}, (28):183--197, 199--234, 235--278, 2015.

\bibitem[Che18]{chen-continuity}
Xiuxiong Chen.
\newblock On the existence of constant scalar curvature {K}\"ahler metric: a
  new perspective.
\newblock {\em Ann. Math. Qu\'e.}, 42(2):169--189, 2018.

\bibitem[CM18]{cmor:extension}
Huayi Chen and Atsushi Moriwaki.
\newblock Extension property of semipositive invertible sheaves over a
  non-archimedean field.
\newblock {\em Annali della Scuola Normale Superiore di Pisa. Classe di
  scienze}, 18(1):241--282, 2018.

\bibitem[CSW18]{csw:kahlerricci}
Xiuxiong Chen, Song Sun, and Bing Wang.
\newblock K\"{a}hler-{R}icci flow, {K}\"{a}hler-{E}instein metric, and
  {K}-stability.
\newblock {\em Geom. Topol.}, 22(6):3145--3173, 2018.

\bibitem[Del87]{deligne:detcoh}
Pierre Deligne.
\newblock Le d{\'e}terminant de la cohomologie. {C}urrent trends in
  arithmetical algebraic geometry ({A}rcata, {C}alif., 1985), 93--177
  {C}ontemp. {M}ath., 67.
\newblock {\em Amer. Math. Soc., Providence, RI}, 1987.

\bibitem[Der16]{uniform-twisted}
Ruadha\'{\i} Dervan.
\newblock Uniform stability of twisted constant scalar curvature {K}\"ahler
  metrics.
\newblock {\em Int. Math. Res. Not. IMRN}, (15):4728--4783, 2016.

\bibitem[Don02]{donaldson:scalar}
Simon~K. Donaldson.
\newblock Scalar curvature and stability of toric varieties.
\newblock {\em J. Differential Geom.}, 62(2):289--349, 2002.

\bibitem[Don12]{donaldson:bstab}
Simon~K. Donaldson.
\newblock Stability, birational transformations and the {K}{\"a}hler-{E}instein
  problem.
\newblock In {\em Surveys in differential geometry. {V}ol. {XVII}}, volume~17
  of {\em Surv. Differ. Geom.}, pages 203--228. Int. Press, Boston, MA, 2012.

\bibitem[Don15]{donaldson:algebraic}
Simon~K. Donaldson.
\newblock Algebraic families of constant scalar curvature {K}\"ahler metrics.
\newblock In {\em Surveys in differential geometry 2014. {R}egularity and
  evolution of nonlinear equations}, volume~19 of {\em Surv. Differ. Geom.},
  pages 111--137. Int. Press, Somerville, MA, 2015.

\bibitem[DR17a]{dar:rubinstein}
Tam{\'a}s Darvas and Yanir Rubinstein.
\newblock Tian’s properness conjectures and {F}insler geometry of the space
  of {K}{\"a}hler metrics.
\newblock {\em Journal of the American Mathematical Society}, 30(2):347--387,
  2017.

\bibitem[DR17b]{dervanross}
Ruadha\'{\i} Dervan and Julius Ross.
\newblock K-stability for {K}\"{a}hler manifolds.
\newblock {\em Math. Res. Lett.}, 24(3):689--739, 2017.

\bibitem[DS16]{datszek}
Ved Datar and G\'{a}bor Sz\'{e}kelyhidi.
\newblock K\"{a}hler-{E}instein metrics along the smooth continuity method.
\newblock {\em Geom. Funct. Anal.}, 26(4):975--1010, 2016.

\bibitem[DT92]{ding-tian}
Wei~Yue Ding and Gang Tian.
\newblock K\"ahler-{E}instein metrics and the generalized {F}utaki invariant.
\newblock {\em Invent. Math.}, 110(2):315--335, 1992.

\bibitem[DZ24]{DK-twisted}
Tam{\'a}s Darvas and Kewei Zhang.
\newblock Twisted {K}ähler-{E}instein metrics in big classes.
\newblock {\em To appear in Comm. Pure. Appl. Math.}, 2024.

\bibitem[Elk89]{elkik:1}
Ren{\'e}e Elkik.
\newblock Fibr{\'e}s d'intersections et int{\'e}grales de classes de {C}hern.
\newblock In {\em Annales scientifiques de l'{\'E}cole Normale Sup{\'e}rieure},
  volume~22, pages 195--226, 1989.

\bibitem[Elk90]{elkik:2}
Ren{\'e}e Elkik.
\newblock M{\'e}triques sur les fibr{\'e}s d’intersection.
\newblock {\em Duke Mathematical Journal}, 61(1):303--328, 1990.

\bibitem[GKZ08]{book:gkz}
I.~M. Gelfand, M.~M. Kapranov, and A.~V. Zelevinsky.
\newblock {\em Discriminants, resultants and multidimensional determinants}.
\newblock Modern Birkh\"auser Classics. Birkh\"auser Boston, Inc., Boston, MA,
  2008.
\newblock Reprint of the 1994 edition.

\bibitem[Has19]{hashimoto}
Yoshinori Hashimoto.
\newblock Existence of twisted constant scalar curvature {K}\"ahler metrics
  with a large twist.
\newblock {\em Math. Z.}, 292(3-4):791--803, 2019.

\bibitem[{Hat}21]{hattori-jstab}
Masafumi {Hattori}.
\newblock {A decomposition formula for J-stability and its applications}.
\newblock {\em arXiv e-prints}, page arXiv:2103.04603, March 2021.

\bibitem[His16]{hisamoto:toric}
Tomoyuki Hisamoto.
\newblock Stability and coercivity for toric polarizations.
\newblock {\em arXiv preprint arXiv:1610.07998}, 2016.

\bibitem[HK21]{hashimoto-keller}
Yoshinori Hashimoto and Julien Keller.
\newblock Quot-scheme limit of {F}ubini-{S}tudy metrics and {D}onaldson's
  functional for vector bundles.
\newblock {\em \'Epijournal G\'eom. Alg\'ebrique}, 5:Art. 21, 38, 2021.

\bibitem[Hos15]{hoskins}
Victoria Hoskins.
\newblock Moduli problems and geometric invariant theory.
\newblock {\em Notes available online}, 2015.

\bibitem[{Kap}13]{kapadia}
Hetal~Manilal {Kapadia}.
\newblock {Deligne Pairings and Discriminants of Algebraic Varieties}.
\newblock {\em arXiv e-prints}, page arXiv:1312.7870, December 2013.

\bibitem[Kem93]{kempf}
George~R. Kempf.
\newblock {\em Algebraic varieties}, volume 172 of {\em London Mathematical
  Society Lecture Note Series}.
\newblock Cambridge University Press, Cambridge, 1993.

\bibitem[KM76]{knudsenmumford}
Finn Knudsen and David Mumford.
\newblock The projectivity of the moduli space of stable curves {I}:
  preliminaries on det and {D}iv.
\newblock {\em Mathematica Scandinavica}, 39(1):19--55, 1976.

\bibitem[Li22a]{chili:guniform}
Chi Li.
\newblock G-uniform stability and {K}{\"a}hler--{E}instein metrics on {F}ano
  varieties.
\newblock {\em Inventiones mathematicae}, 227(2):661--744, 2022.

\bibitem[Li22b]{chili:ytd}
Chi Li.
\newblock Geodesic rays and stability in the {cscK} problem.
\newblock {\em Ann. Sci. {\'E}c. Norm. Sup{\'e}r. (4)}, 55(6):1529--1574, 2022.

\bibitem[MFK94]{book:git}
D.~Mumford, J.~Fogarty, and F.~Kirwan.
\newblock {\em Geometric invariant theory.}, volume~34 of {\em Ergeb. Math.
  Grenzgeb.}
\newblock Berlin: Springer-Verlag, 3rd enl. ed. edition, 1994.

\bibitem[Mor99]{moriwaki:deligne}
Atsushi Moriwaki.
\newblock The continuity of {D}eligne's pairing.
\newblock {\em Internat. Math. Res. Notices}, (19):1057--1066, 1999.

\bibitem[Nys12]{wn:tcoko}
David~Witt Nystr{\"o}m.
\newblock Test configurations and {O}kounkov bodies.
\newblock {\em Compositio Mathematica}, 148(6):1736--1756, 2012.

\bibitem[Oda12]{odaka-2}
Yuji Odaka.
\newblock The {C}alabi conjecture and {K}-stability.
\newblock {\em Int. Math. Res. Not. IMRN}, (10):2272--2288, 2012.

\bibitem[Oda13a]{odaka2}
Yuji Odaka.
\newblock A generalization of the {R}oss-{T}homas slope theory.
\newblock {\em Osaka J. Math.}, 50(1):171--185, 2013.

\bibitem[Oda13b]{odaka}
Yuji Odaka.
\newblock The {GIT} stability of polarized varieties via discrepancy.
\newblock {\em Ann. Math. (2)}, 177(2):645--661, 2013.

\bibitem[Oda15]{odaka:moduli}
Yuji Odaka.
\newblock Compact moduli spaces of {K}{\"a}hler--{E}instein fano varieties.
\newblock {\em Publications of the Research Institute for Mathematical
  Sciences}, 51(3):549--565, 2015.

\bibitem[Pau12]{paul:hyperdiscriminants}
Sean~Timothy Paul.
\newblock Hyperdiscriminant polytopes, {C}how polytopes, and {M}abuchi energy
  asymptotics.
\newblock {\em Annals of Mathematics}, pages 255--296, 2012.

\bibitem[{Pau}13]{paul-stablepairs-13}
Sean~Timothy {Paul}.
\newblock {Stable pairs and coercive estimates for the Mabuchi functional}.
\newblock {\em arXiv e-prints}, page arXiv:1308.4377, August 2013.

\bibitem[PRS08]{phongrosssturm}
D.~H. Phong, Julius Ross, and Jacob Sturm.
\newblock Deligne pairings and the {K}nudsen-{M}umford expansion.
\newblock {\em J. Differential Geom.}, 78(3):475--496, 2008.

\bibitem[PSZ24]{paulsunzhang}
Sean~Timothy Paul, Song Sun, and Junsheng Zhang.
\newblock On the generalized numerical criterion.
\newblock {\em Int. Math. Res. Not. IMRN}, (17):12150--12160, 2024.

\bibitem[Reb23]{reb:3}
R{\'e}mi Reboulet.
\newblock The space of finite-energy metrics over a degeneration of complex
  manifolds.
\newblock {\em Journal de l’{\'E}cole polytechnique—Math{\'e}matiques},
  10:659--701, 2023.

\bibitem[SD18]{zak:kahler}
Zakarias Sj{\"o}str{\"o}m~Dyrefelt.
\newblock K-semistability of {cscK} manifolds with transcendental cohomology
  class.
\newblock {\em J. Geom. Anal.}, 28(4):2927--2960, 2018.

\bibitem[Sz{\'e}07]{szekelyhidi-blms}
G{\'a}bor Sz{\'e}kelyhidi.
\newblock Extremal metrics and {$K$}-stability.
\newblock {\em Bull. Lond. Math. Soc.}, 39(1):76--84, 2007.

\bibitem[Sz{\'e}15]{sze:filtr}
G{\'a}bor Sz{\'e}kelyhidi.
\newblock Filtrations and test-configurations.
\newblock {\em Mathematische Annalen}, 362(1):451--484, 2015.

\bibitem[Sz{\'e}16]{sze:partial}
G{\'a}bor Sz{\'e}kelyhidi.
\newblock The partial {$C^0$}-estimate along the continuity method.
\newblock {\em Journal of the American Mathematical Society}, 29(2):537--560,
  2016.

\bibitem[Tia97]{tian:kems}
Gang Tian.
\newblock {K}{\"a}hler-{E}instein metrics with positive scalar curvature.
\newblock {\em Inventiones mathematicae}, 130(1):1--37, 1997.

\bibitem[Tia13]{tian:survey}
Gang Tian.
\newblock Stability of pairs.
\newblock {\em arXiv preprint arXiv:1310.5544}, 2013.

\bibitem[Tia17]{tian-cm}
Gang Tian.
\newblock K-stability implies {CM}-stability.
\newblock In {\em Geometry, analysis and probability}, volume 310 of {\em
  Progr. Math.}, pages 245--261. Birkh\"auser/Springer, Cham, 2017.

\bibitem[Wan12]{xiaowei-height}
Xiaowei Wang.
\newblock Height and {GIT} weight.
\newblock {\em Math. Res. Lett.}, 19(4):909--926, 2012.

\bibitem[{Wes}15]{westrich}
Quinton {Westrich}.
\newblock {Discriminants and Higher K-energies on Polarized K{\"a}hler
  Manifolds}.
\newblock {\em arXiv e-prints}, page arXiv:1507.01152, July 2015.

\bibitem[Zen19]{zeng}
Yu~Zeng.
\newblock Deformations from a given {K}\"ahler metric to a twisted {CSCK}
  metric.
\newblock {\em Asian J. Math.}, 23(6):985--1000, 2019.

\bibitem[Zha24]{KZ-quantization}
Kewei Zhang.
\newblock A quantization proof of the uniform {Y}au--{T}ian--{D}onaldson
  conjecture.
\newblock {\em J. Eur. Math. Soc. (JEMS)}, 26(12):4763--4778, 2024.

\end{thebibliography}

\end{document}